\definecolor{blue}{rgb}{0,0,1}
\definecolor{red}{rgb}{1,0,0}
\definecolor{green}{rgb}{0,.6,.2}
\definecolor{purple}{rgb}{1,0,1}
\long\def\red#1\endred{\textcolor{red}{#1}}
\long\def\blue#1\endblue{\textcolor{blue}{#1}}
\long\def\purple#1\endpurple{\textcolor{purple}{ #1}}
\long\def\green#1\endgreen{\textcolor{green}{#1}}
\def\scrA{{\mathcal A}}
\def\scrB{{\mathcal B}}
\def\scrC{{\mathcal C}}
\def\scrD{{\mathcal D}}
\def\scrF{{\mathcal F}}
\def\scrR{{\mathcal R}}
\def\fH{{\mathfrak H}}
\def\fS{{\mathfrak S}}
\newcommand{\sm}{\left(\begin{smallmatrix}}
\newcommand{\esm}{\end{smallmatrix}\right)}
\newcommand{\bpm}{\begin{pmatrix}}
\newcommand{\ebpm}{\end{pmatrix}}
\def\M{\operatorname{M}}
\newcommand{\matr}[4]{\left( \begin{matrix} #1 & #2 \\ #3 & #4 \end{matrix} \right) }
\newcommand{\cmatr}[2]{\left( \begin{matrix} #1 \\ #2 \end{matrix} \right) }
\newcommand{\scmatr}[2]{\left( \begin{smallmatrix} #1 \\ #2 \end{smallmatrix} \right) }
\newcommand{\Z}{{\mathbb Z}}
\newcommand{\R}{{\mathbb R}}
\newcommand{\C}{{\mathbb C}}
\newcommand{\F}{{\mathbb F}}
\newcommand{\bs}{\backslash}
\newcommand{\tfS}{\widetilde{\fS}}
\newcommand{\dd}{\mathsf{d}}
\newcommand{\GaG}{\Gamma\bs{\rm G}}
\newcommand{\og}{\overline{g}}
\newcommand{\hF}{{\widehat{F}}}
\newcommand{\tJ}{{\widetilde{J}}}
\newcommand{\tmu}{{\widetilde{\mu}}}
\newcommand{\tOmega}{{\widetilde{\Omega}}}
\newcommand{\tN}{{\widetilde{N}}}
\newcommand{\tn}{{\widetilde{n}}}
\newcommand{\ve}{\varepsilon}
\newcommand{\col}{\: : \:}
\renewcommand{\mod}{\:\operatorname{mod}\:}
\newcommand{\bn}{{\text{\boldmath$0$}}}
\newcommand{\oF}{\overline{\mathbb{F}}}
\newcommand{\oJ}{\overline{J}}
\def\diag{\operatorname{diag}}
\def\SL{\operatorname{SL}}
\def\ASL{\operatorname{ASL}}
\def\GL{\operatorname{GL}}
\def\new{\operatorname{new}}
\def\rank{\operatorname{rank}}
\def\Span{\operatorname{Span}}
\def\ord{\operatorname{ord}}
\def\tre{\operatorname{Re}}
\def\vol{\operatorname{vol}}
\newtheorem{theorem}{Theorem}[section]
\newaliascnt{lemma}{theorem} 
\newtheorem{lemma}[lemma]{Lemma} 
\newaliascnt{proposition}{theorem} 
\newtheorem{proposition}[proposition]{Proposition} 
\newaliascnt{corollary}{theorem} 
\newtheorem{corollary}[corollary]{Corollary} 
\newaliascnt{problem}{theorem} 
\newtheorem{problem}[problem]{Problem} 
\theoremstyle{remark}
\newtheorem{remark}[theorem]{Remark}
\numberwithin{theorem}{section}
\numberwithin{equation}{section}
\def\vecnull{{\text{\boldmath$0$}}}
\def\veca{{\text{\boldmath$a$}}}
\def\vecb{{\text{\boldmath$b$}}}
\def\vecc{{\text{\boldmath$c$}}}
\def\vece{{\text{\boldmath$e$}}}
\def\vecn{{\text{\boldmath$n$}}}
\def\vecv{{\text{\boldmath$v$}}}
\def\vecw{{\text{\boldmath$w$}}}
\def\vecx{{\text{\boldmath$x$}}}
\def\trans{\,^\mathrm{t}\!}
\DeclareMathOperator{\tr}{tr}
\DeclareMathOperator{\Gr}{Gr}
\title{Effective equidistribution of primitive rational points on expanding horospheres}
\author{Daniel El-Baz}
\address{Institute of Analysis and Number Theory, TU Graz, Steyrergasse 30, 8010 Graz, Austria}
\email{\tt daniel.elbaz.88@gmail.com}
\author{Min Lee}
\address{School of Mathematics, University of Bristol, Bristol BS8 1TW, U.K.}
\email{\tt min.lee@bristol.ac.uk}
\author{Andreas Str\"ombergsson}
\address{Department of Mathematics, Uppsala University, Box 480, SE-75106, Uppsala, Sweden}
\email{\tt astrombe@math.uu.se}
\date{\today}
\begin{document}
\thanks{We are grateful to \'Arp\'ad T\'oth and M\'arton Erd\'elyi for sharing their preprint on the matrix Kloosterman sum early with us and several conversations. 
We are also grateful to Igor Shparlinski for making us aware of his paper with Ahmadi, \cite{AhShS2007}.
We would further like to express our thanks for a Heilbronn Focused Research Grant and the hospitality of the Heilbronn institute in Bristol. 
D.E.\ is supported by the Austrian Science Fund (FWF), Projects P-34763 and Y-901.
M.L.\ is supported by a Royal Society University Research Fellowship. 
A.S.\ is supported by the Knut and Alice Wallenberg Foundation}
\maketitle

\begin{abstract}
We prove an effective version of a result
due to Einsiedler, Mozes, Shah and Shapira
on the asymptotic distribution of primitive rational points on
expanding closed horospheres in the space of lattices. 
Key ingredients of our proof include recent bounds on matrix Kloosterman sums
due to Erd\'elyi and T\'oth,
results by Clozel, Oh and Ullmo on the effective equidistribution of Hecke points,
and Rogers' integration formula in the geometry of numbers.
As an application of the main theorem, we also obtain a 
result on the limit distribution of the number of 
small solutions of a random system of linear congruences to a large modulus.
Furthermore, as a by-product of our proofs, %
we obtain a sharp bound on the number of nonsquare matrices over a finite field $\F_p$
with small entries 
and of a given size and rank.
\end{abstract}

\tableofcontents

\section{Introduction}

\begin{comment}
The present study forms a natural conclusion to a series of papers on the distribution of primitive rational points on expanding closed horospheres, as we explain below.
\green
In this paper, we study the joint distribution of primitive rational points on expanding closed horospheres on ${\rm SL}_{d+n}(\Z)\bs{\rm SL}_{d+n}(\R)$ for any $1 \leq n \leq d$. The present study forms a natural conclusion to a series of papers on the distribution of primitive rational points on expanding closed horospheres. We explain the setting, problem and history of this study below. 
\endgreen
\end{comment}

\subsection{Setup}
Let $1\leq n \leq d$, ${\rm G} = {\rm SL}_{d+n}(\mathbb{R})$ and $\Gamma={\rm SL}_{d+n}(\mathbb{Z})$.
Our discussion will take place in %
the homogeneous space $\Gamma\bs{\rm G}$.
We will often view an element $g\in {\rm G}$ as a block matrix,
$g = \bpm A & B\\ C& D\ebpm$,
where $A,B,C,D$ are real matrices of dimensions $d\times d$,
$d\times n$, $n\times d$ and $n\times n$, respectively.
In particular, for $V\in{\rm M}_{n\times d}(\mathbb{R})$
(that is, $V$ being a real matrix of dimension $n\times d$), let us write
\begin{align}
& %
n_+(V):= \bpm I_d & V\\ \vecnull & I_n\ebpm\in {\rm G}.
\end{align}
Also, for $y>0$, let
\begin{equation}\label{Dydef}
D(y) := \bpm y^{-\frac{n}{d}} I_d & \vecnull \\ \vecnull & y I_n\ebpm \in {\rm G}.
\end{equation}
For each $V\in\M_{d\times n}(\R)$, the point $\Gamma\, n_+(V)$ in $\GaG$
depends only on $V\mod\M_{d\times n}(\Z)$;
hence the map $V\mapsto\Gamma\, n_+(V)$ factors through a map %
\begin{align}\label{tnpDEF}
\widetilde{n}_+:\M_{d\times n}(\R/\Z) %
\to\Gamma\bs {\rm G}.
\end{align}
In fact $\tn_+$ is a smooth embedding of the 
$dn$-dimensional torus 
$\M_{d\times n}(\R/\Z)$;
its image is a closed horosphere in 
$\Gamma\backslash {\rm G}$,
which we call $\fH_1$.
More generally, let $\fH_y$ be $\fH_1$ translated by $D(y)$:
\begin{align*}
\fH_y=\fH_1D(y)=\bigl\{\tn_+(V)D(y)\col V\in\M_{d\times n}(\R/\Z)\bigr\}.
\end{align*}
These $\fH_y$ form a family of closed horospheres in $\GaG$,
which expand  %
as $y$ increases. %
It is well-known that as $y\to\infty$,
the $\fH_y$ become equidistributed in $\GaG$
with respect to the ${\rm G}$-invariant probability measure.

Our main object of study is a very special finite subset of 
the closed horosphere $\fH_y$, appearing when $y$ is an integer.
To describe this set,
let ${\rm H}$ be the following subgroup of ${\rm G}$:
\begin{equation}
{\rm H} = \left\{\bpm A & \bn \\ U & I_n\ebpm : A\in {\rm SL}_d(\mathbb{R}),\:
 U\in {\rm M}_{n\times d}(\mathbb{R}),\:
 A=I_n \text{ if } n=d\right\}.
\end{equation}
Then $\Gamma\bs\Gamma{\rm H}$ is a closed embedded submanifold of $\GaG$, and $\Gamma\bs\Gamma{\rm H}$ has the structure of a torus fiber bundle over $\SL_d(\Z)\bs\SL_d(\R)$.
Let $\fS_y$ be the \textit{intersection} of $\Gamma\bs\Gamma{\rm H}$ and $\fH_y$.
\begin{lemma}\label{fScharLEM}
The set $\fS_y$ is empty unless $y$ is an integer.
For $y=q$ a positive integer,
the set $\fS_q$ consists exactly of the points
$\tn_+(q^{-1}R)D(q)$ where
$R$ runs through all matrices in
$\M_{d\times n}(\Z/q\Z)$ with the property that
the rows of $R$ generate $(\Z/q\Z)^n$.
\end{lemma}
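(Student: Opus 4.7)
My plan is to translate membership of $\Gamma\bs\Gamma{\rm H}$ into conditions on the lattice $\Lambda := \Z^{d+n} g \subset \R^{d+n}$, apply this with $g = n_+(V) D(y)$, and then read off $V$ and $y$. Under the identification of $\GaG$ with the space of covolume-one lattices via $\Gamma g \mapsto \Z^{d+n} g$, the orbit $\Gamma g$ lies in $\Gamma\bs\Gamma{\rm H}$ iff $\Lambda$ admits a basis matching the row structure of some $h \in {\rm H}$. Reading off that structure, this is equivalent to: (L1) $\Lambda_0 := \Lambda \cap (\R^d \times \{\vecnull\})$ is a rank-$d$ lattice of covolume one in $\R^d$; (L2) the projection $\pi_2(\Lambda)$ onto the last $n$ coordinates equals $\Z^n$; and in the $n = d$ case, additionally (L3) $\Lambda_0 = \Z^d \times \{\vecnull\}$. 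The forward direction is immediate; the converse uses any basis $(a_i, \vecnull)$ of $\Lambda_0$ (with $\det(a_i) = 1$, arranged by a sign flip) together with lifts $(u_j, e_j) \in \Lambda$ of the standard basis of $\pi_2(\Lambda)$ to build the desired $h \in {\rm H}$.

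For $g = n_+(V) D(y)$, a direct computation shows $\Lambda$ is generated by the rows $(y^{-n/d} e_i,\, y V_i)$ for $1 \leq i \leq d$ and $(\vecnull,\, y e_j)$ for $1 \leq j \leq n$, where $V_i$ denotes the $i$-th row of $V$. Its projection onto the last $n$ coordinates is $y(L_V + \Z^n)$, with $L_V := \{m V : m \in \Z^d\} \subset \R^n$. Since this lattice already contains $y \Z^n$, condition (L2) forces $y \Z^n \subset \Z^n$, hence $y \in \Z_{>0}$. Writing $y = q$, (L2) becomes $L_V + \Z^n = q^{-1} \Z^n$, which unpacks to the statement that $V \equiv q^{-1} R \pmod{\M_{d \times n}(\Z)}$ for some $R \in \M_{d \times n}(\Z)$ whose rows generate $(\Z/q\Z)^n$ as a subgroup.

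It remains to check that (L1) (and (L3) when $n = d$) holds automatically under these conditions. With $y = q$ and $V = q^{-1} R$, the sublattice $\Lambda_0$ equals $q^{-n/d} K \times \{\vecnull\}$, where $K := \{m \in \Z^d : m R \in q \Z^n\}$ is the kernel of the homomorphism $\Z^d \to (\Z/q\Z)^n$, $m \mapsto m R \bmod q$. The row-generation hypothesis is precisely surjectivity of this map, so $[\Z^d : K] = q^n$ and $\Lambda_0$ has covolume $q^{-n} \cdot q^n = 1$, verifying (L1). When $n = d$, this surjectivity combined with $q \Z^d \subset K$ and the index identity $[\Z^d : q \Z^d] = q^d = [\Z^d : K]$ forces $K = q \Z^d$, whence $\Lambda_0 = \Z^d \times \{\vecnull\}$, giving (L3). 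The main subtlety is exactly this last verification: the extra constraint $A = I_d$ appearing in the $n = d$ definition of ${\rm H}$ might seem to impose further conditions on $V$ beyond (L1)--(L2), but it turns out to be automatic once (L2) is in force.
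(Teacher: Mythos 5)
Your proof is correct. At bottom it is the same argument as the paper's, but run through the dictionary between cosets $\Gamma g$ and unimodular lattices $\Lambda=\Z^{d+n}g$: your conditions (L1)--(L3) are exactly the lattice-theoretic shadow of the block structure of $\mathrm{H}$, your subgroup $K$ and the index computation $[\Z^d:K]=q^n$ are the same objects the paper uses, and your lifts $(u_j,\vece_j)$ of the standard basis of $\pi_2(\Lambda)$ correspond precisely to the paper's vectors $\vecc_j$ with $\vecc_j\scmatr{R}{qI_n}=\vece_j$, while your basis of $\Lambda_0$ corresponds to the rows $(\veca_j,\vecb_j)$ of the paper's explicit matrix $\eta$. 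The difference is one of packaging: the paper constructs $\eta\in\Gamma$ by hand and checks a matrix identity, whereas you first isolate a reusable characterization of $\Gamma\bs\Gamma\mathrm{H}$ and then reduce both directions of the lemma to a single equivalence, namely that (L2) holds iff $y=q\in\Z_{>0}$ and $V\equiv q^{-1}R$ with $R$ primitive, with (L1) and (L3) then automatic. This buys a somewhat cleaner, unified treatment of the two implications and makes transparent why the extra constraint $A=I_d$ in the $n=d$ case imposes no further condition (the index count forces $K=q\Z^d$), at the cost of having to justify the intermediate characterization, which you do correctly. One small remark: given that $\Lambda_0$ has full rank $d$, the covolume-one part of (L1) is in fact forced by (L2) and $\operatorname{covol}(\Lambda)=1$, so your direct verification of (L1) could be shortened; but as written it is complete and sound.
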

(Here, naturally, $\M_{d\times n}(\Z/q\Z)$ denotes the group of $d\times n$ matrices with
entries in $\Z/q\Z$; note also that for any 
$R\in\M_{d\times n}(\Z/q\Z)$, %
$q^{-1}R$ is a well-defined point in the torus
$\M_{d\times n}(\R/\Z)$.)

\vspace{3pt}

We prove \autoref{fScharLEM} %
in Section \ref{RqSEC}
(see also \cite[Sec.\ 2]{EMSS2016}).
As in \cite[Definition~1.1]{EMSS2016},
for a positive integer $q$,
let us call a matrix 
$R\in {\rm M}_{d\times n}(\mathbb{Z}/q\Z)$ \textit{($q$-)primitive} if the rows of $R$
generate $(\mathbb{Z}/q\mathbb{Z})^n$.
We will also say that a matrix $R\in {\rm M}_{d\times n}(\mathbb{Z})$ is $q$-primitive
if its reduction mod $q$ is $q$-primitive.
Let $\scrR_q$ be the set of all primitive matrices in $\M_{d\times n}(\Z/q\Z)$.
Then \autoref{fScharLEM} says that
\begin{align}\label{ourrationalpoints}
\fS_q=\bigl\{\tn_+(q^{-1}R)D(q)\col R\in\scrR_q\bigr\}.
\end{align}
We call $\fS_q$ the set of primitive rational points on $\fH_q$.

We are interested in the behavior of this %
point set $\fS_q$ for $q$ large.
It was proved by Einsiedler, Mozes, Shah and Shapira \cite{EMSS2016}
that $\fS_q$ becomes equidistributed in
$\Gamma\bs\Gamma{\rm H}$
with respect to the ${\rm H}$-invariant probability measure,
as $q\to\infty$.
In fact, confirming a conjecture by Marklof, they proved the much stronger fact that the point set
\begin{equation}
\tfS_q:=\bigl\{\bigl(q^{-1}R, \tn_+(q^{-1}R)D(q)\bigr)\col R\in\scrR_q\bigr\}
\end{equation}
becomes (jointly) equidistributed in the product space
$(\R/\Z)^{dn}\times\Gamma\bs\Gamma{\rm H}$,
where we have identified the torus
$\M_{d\times n}(\R/\Z)$
with $(\R/\Z)^{dn}$.

In the present paper we give a new proof of this equidistribution result
which relies on harmonic analysis and number theory,
spectral theory of automorphic forms, the newly studied object of matrix Kloosterman sums,
and Rogers' integration formula in the geometry of numbers.
Our proof leads to an \textit{effective} version of the equidistribution result,
that is, we obtain explicit information on how quickly the equidistribution takes place as $q\to\infty$.

\subsection{Informal statement of the main result}
Given a function $f:(\R/\Z)^{dn} \times \Gamma \backslash \Gamma {\rm H}\to\R$, set
\begin{equation}\label{SqfDEF}
\scrA_q(f) = 
\frac{1}{\#\mathcal{R}_q} \sum_{R\in \mathcal{R}_q} f(q^{-1}R, \tn_+(q^{-1} R) D(q)).
\end{equation}
Then the statement of Einsiedler--Mozes--Shah--Shapira's theorem
is precisely that whenever $f$ is bounded and continuous, 
$\scrA_q(f)$ converges to the integral of $f$ %
as $q\to\infty$.
By standard approximation arguments,
it is equivalent to 
state %
that this convergence holds 
whenever %
$f$ is smooth and compactly supported.

Our main result is an effective version of that %
result, with a power-saving error term, meaning that we prove, 
for every $1 \leq n \leq d$, and for any sufficiently smooth $f$,
\begin{equation}
\scrA_q(f) = \int_{(\R/\Z)^{dn} \times \Gamma \backslash \Gamma {\rm H}} f\, dT \, d \mu_{\rm H} 
+ O_d\bigl(S(f)\, q^{-\delta}\bigr)
\end{equation}
as $q \to \infty$, 
where $dT$ is the usual Lebesgue measure on $(\R/\Z)^{dn}$,
$\mu_{\rm H}$ is the $\rm H$-invariant  probability measure on $\Gamma\bs\Gamma{\rm H}$,
and $S(f)$ is a certain Sobolev norm 
of $f$, defined in terms of the ${\rm L}^2$ and  ${\rm L}^\infty$ 
norms of $f$ and its first several derivatives 
(see \S\ref{ss:mainresult}),  %
while $\delta > 0$ is a fixed constant. 

In the special case $n=1$ such an effective equidistribution result was obtained in \cite{LeeMarklof2017} (for $d=2$) and \cite{EHL18} (for general $d$).
Our main theorem, stated more precisely in the next section, finally provides an effective version of the general case of the Einsiedler--Mozes--Shah--Shapira theorem.

\subsection{Formal statement of the main result}\label{ss:mainresult}

In order to state our result we need to introduce certain Sobolev norms of functions on homogeneous spaces
(compare \cite[Sec.\ 2.9.2]{aV2010a}).
Suppose $\Lambda$ is a lattice in a connected Lie group $L$,
and let $\mu$ be the $L$-invariant probability measure on $\Lambda\bs L$.
Fix, once and for all, a linear basis $\scrB$ for the Lie algebra of $L$.
Let $k\geq0$ be an integer.
For $f\in{\rm C}^k(\Lambda\bs L)$ and $1\leq p\leq\infty$ (in fact we will only consider $p=2$ and $p=\infty$), we 
define the Sobolev norm of $f$
\begin{align}\label{SpkDEF}
S_{p,k}(f)=\sum_{\ord(\scrD)\leq k}\|\scrD f\|_{{\rm L}^p(\Lambda\bs L,\mu)},
\end{align}
where $\scrD$ runs through all monomials in $\scrB$ of order $\leq k$. 
Here $\scrD$ acts on $f$ by right differentiation:
\begin{equation} Xf(g)=\frac d{dt}f(g\exp(tX))\big|_{t=0} 
\text{ for any }X\in \scrB.  
\end{equation}
It should be noted that changing the basis $\scrB$ only distorts $S_{p,k}$ by a bounded factor.

We write ${\rm C}_b^k(\Lambda\bs L)$ for the space of functions in ${\rm C}^k(\Lambda\bs L)$ which have all derivatives of order $\leq k$ bounded,
i.e.,
\begin{align*}
{\rm C}_b^k(\Lambda\bs L)=\bigl\{f\in {\rm C}^k(\Lambda\bs L)\col S_{\infty,k}(f)<\infty\bigr\}.
\end{align*}
It will be convenient to also introduce,
in a non-standard but elementary way,
\textit{fractional} Sobolev norms (cf.\ \cite[Lemma 2]{StrombergssonVenkatesh2005}):
For any %
real number $k<\kappa<k+1$
and $f\in{\rm C}^{k+1}(\Lambda\bs L)$,
we set
\begin{align}\label{fracSobnorm}
S_{p,\kappa}(f)=S_{p,k}(f)^{k+1-\kappa}S_{p,k+1}(f)^{\kappa-k}.
\end{align}
In the statement of the following theorem,
the above formalism is applied
for the homogeneous space
$(\R/\Z)^{dn}\times \Gamma\bs\Gamma{\rm H}$,
that is, with 
$\Lambda=\Z^{dn}\times(\Gamma\cap {\rm H})$
and $L=\R^{dn}\times {\rm H}$.

Let $\theta$ be the constant towards the Ramanujan conjecture
for Maass wave forms on $\SL_2(\Z)\bs\SL_2(\R)$, which asserts $\theta=0$.
The current best bound is $\theta \leq 7/64$, due to Kim and Sarnak
\cite[Appendix~2]{KimSarnak2003}.

\begin{theorem}\label{MAINTHM}
For the given positive integers $1\leq n\leq d$, let
\begin{align*}
\kappa=2dn;\hspace{10pt}
\vartheta=\begin{cases} n-1 & (\text{if } n>1), \\ \frac{1}{2} & (\text{if } n=1);\end{cases}
\quad\text{and}\quad
\begin{cases}
\kappa'=\tfrac12(d^2-1);\hspace{10pt}
\vartheta'=\tfrac12\min(n,d-n)
\\[1pt]
\rule{0pt}{0pt} \hspace{125pt}(\text{if }\: n<d\text{ and }d\geq3);
\\[3pt]
\kappa'=\tfrac32;\hspace{10pt}
\vartheta'=\tfrac12-\theta 
 \hspace{30pt}(\text{if }\: n=1\text{ and }d=2);
\\[3pt]
\kappa'=\kappa;\hspace{10pt}
\vartheta'=\vartheta
\hspace{50pt}(\text{if }\: n=d).
\end{cases}
\end{align*}
Also let $k$ be the smallest integer greater than both $\kappa$ and $\kappa'$.

\vspace{3pt}

Then for any $0<\ve<\frac12$,
$f\in {\rm C}_b^k((\R/\Z)^{dn} \times \Gamma \backslash \Gamma {\rm H})$, 
and any positive integer $q$, %
\begin{align}\label{MAINTHMres}
\scrA_q(f)=\int_{(\R/\Z)^{dn}} 
\int_{\Gamma\backslash \Gamma {\rm H}} f(T, g) \,d\mu_{\rm H}(g) \,dT
+
O\Bigl(S_{\infty,\kappa+\ve}(f)\, q^{-\vartheta+\ve}+%
S_{2,\kappa'+\ve}(f)\, q^{-\vartheta'+\ve} \Bigr),
\end{align}
where the implied constant only depends on $d$ and $\ve$.
\end{theorem}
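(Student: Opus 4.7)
\medskip

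\noindent\textbf{Proof plan.}
The natural first move is to Fourier-expand $f$ in the torus variable. Writing $f(T,g)=\sum_{m\in\Z^{dn}}\hat f(m,g)\,e(\langle m,T\rangle)$ (after identifying $\M_{d\times n}(\R/\Z)$ with $(\R/\Z)^{dn}$) gives
\begin{align*}
\scrA_q(f)=\sum_{m\in\Z^{dn}}\frac{1}{\#\scrR_q}\sum_{R\in\scrR_q}\hat f(m,\tn_+(q^{-1}R)D(q))\,e(\langle m,q^{-1}R\rangle),
\end{align*}
and the strategy is to treat the $m=\bn$ and $m\neq\bn$ contributions by very different means, matching the two error terms in \eqref{MAINTHMres}. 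Smoothness of $f$ (via integration by parts on the torus) lets me truncate the sum to $|m|\lesssim q^{A}$ with a negligible tail at the cost of a few powers of $S_{\infty,k}(f)$.

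\smallskip

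For the $m=\bn$ term the summand is $F(\tn_+(q^{-1}R)D(q))$ with $F(g)=\int f(T,g)\,dT$, so I am averaging $F$ over the finite point set $\fS_q\subset\Gamma\bs\Gamma{\rm H}$. The key observation is that $\fS_q$ has the structure of a Hecke orbit (on the $\SL_d(\Z)\bs\SL_d(\R)$ base of the torus bundle $\Gamma\bs\Gamma{\rm H}$, together with an explicit torus fiber coordinate determined by $q^{-1}R$). Feeding this into the effective equidistribution of Hecke points due to Clozel–Oh–Ullmo produces the bound $\ll S_{2,\kappa'}(F)\,q^{-\vartheta'}$, which by Fubini equals the second error term in \eqref{MAINTHMres}; the cases $n<d$ (spherical, hence the gap $\tfrac12\min(n,d-n)$) and $n=d$ (abelian, reducing to the $m\neq\bn$ analysis) and $d=2$ (where Kim–Sarnak enters) account for the three regimes in the definition of $\vartheta'$.

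\smallskip

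For $m\neq\bn$ the task is to exhibit cancellation in
\begin{align*}
\frac{1}{\#\scrR_q}\sum_{R\in\scrR_q}\hat f(m,\tn_+(q^{-1}R)D(q))\,e(\langle m,q^{-1}R\rangle).
\end{align*}
Since $\hat f(m,\cdot)$ is a function on $\Gamma\bs\Gamma{\rm H}$, I would decompose it further by Fourier analysis on the $\M_{n\times d}(\R/\Z)$-fibers of the torus bundle $\Gamma\bs\Gamma{\rm H}\to\SL_d(\Z)\bs\SL_d(\R)$; once this is done, the $R$-sum becomes, up to a slowly varying automorphic factor depending only on the $\SL_d$-component, a \emph{matrix Kloosterman sum} attached to $\SL_{d+n}$. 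Erd\'elyi–T\'oth's bounds then give a power saving over the trivial $\#\scrR_q$, and the exponent $\vartheta$ in the first error term comes directly from their estimate (with the special value $\vartheta=\tfrac12$ for $n=1$ recovering the classical Weil/Deligne bound).

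\smallskip

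The final, most delicate step is the summation over all nonzero $m$ and over the dual lattice coming from the fiber Fourier expansion. A pointwise bound loses too much, so I would use a second-moment argument: estimate $\sum_{R\in\scrR_q}|\hat f(m,\tn_+(q^{-1}R)D(q))|^2$ by interpreting the lattice $\Z^{d+n}\cdot n_+(q^{-1}R)D(q)$ geometrically and invoking \emph{Rogers' integration formula} to average over the $\SL_d$-base. This converts a sum over primitive $R$ into an integral over the space of lattices, producing manageable tail behaviour in $|m|$ and yielding a total error $\ll S_{\infty,\kappa}(f)\,q^{-\vartheta}$ with $\kappa=2dn$ (which corresponds to needing roughly two derivatives per torus dimension to make the Fourier tail summable). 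The main obstacle I anticipate is reconciling the matrix Kloosterman bound, which is most naturally stated uniformly in moduli but loses as one grows the frequency, with the outer sum over $m$; Rogers' formula is the device that keeps this exchange honest, and careful bookkeeping of the Sobolev norms throughout Steps~1, 4 and 5 will be needed to land exactly on the exponents $\kappa,\vartheta$ stated in the theorem.
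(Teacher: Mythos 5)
Your plan follows essentially the same route as the paper: a double Fourier expansion (in the $(\R/\Z)^{dn}$ variable and in the torus fibers of $\Gamma\backslash\Gamma{\rm H}$), with the zero-frequency term handled by Clozel--Oh--Ullmo's effective equidistribution of Hecke points, the oscillatory terms reduced to matrix Kloosterman sums bounded via Erd\'elyi--T\'oth (plus prime-power extensions), and the sum over frequencies controlled by averaging a lattice-point majorant over the Hecke orbit and applying Rogers' integration formula. Two details differ slightly from the paper's execution but do not change the method: the terms with zero frequency in $T$ but nonzero fiber frequency must still go through the Kloosterman/Rogers analysis rather than the Hecke-equidistribution step, and Rogers is used to bound a first moment of the majorant $\sum_{X\neq\vecnull}(a+b\|Xg\|_\infty^{\lambda})^{-1}$ (via the fact that the dual Hecke operator preserves integrals) rather than a second moment of the Fourier coefficients.
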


\begin{remark}
In the statement of \autoref{MAINTHM},
it should be noted that we always have
$\kappa+\ve<k$ and $\kappa'+\ve<k$,
and thus both the Sobolev norms
$S_{\infty,\kappa+\ve}(f)$
and $S_{2,\kappa'+\ve}(f)$ are defined and finite.
It should also be noted that 
the introduction of $\kappa'$ and $\vartheta'$ in the case $n=d$ is only a
notational convenience,
allowing a simple comprehensive statement of \eqref{MAINTHMres}. 
Indeed, 
in that case %
the error term in 
\eqref{MAINTHMres} reduces to 
$O\bigl(S_{\infty,\kappa+\ve}(f)\, q^{-\vartheta+\ve}\bigr)$,
since $S_{2,\kappa+\ve}(f)\leq S_{\infty,\kappa+\ve}(f)$.
\end{remark}

\subsection{Discussion of the result and layout of the proof}
\label{layoutSEC}
As we have already mentioned, the problem of studying the limiting distribution of the primitive rational points \eqref{ourrationalpoints} 
on the expanding closed horosphes $\fH_q$, was raised by Marklof, specifically in \cite{Marklof2010} when $n=1$. 
Marklof proved an averaged version of the equidistribution of primitive rational points on expanding horospheres
and used it to obtain a limiting distribution result for Frobenius numbers. 
His work was made effective, using estimates on the decay of matrix coefficients, by Li \cite{Li2015}. 

The proof of Marklof's conjecture by Einsiedler, Mozes, Shah and Shapira \cite{EMSS2016}
uses techniques from homogeneous dynamics and relies in particular on measure-classification theorems due to Ratner \cite{rat}, extended by Shah \cite{sha}, which are inherently ineffective. 

For $n\geq2$, the result of \autoref{MAINTHM}, with \textit{any} effective rate of equidistribution,
is new. 
It is also worth noticing that in the special case $n=1$,
our error bound is stronger than those in \cite{LeeMarklof2017} (for $n=1$ and $d=2$) 
and in \cite{EHL18} (for $n=1$ and $d \geq 2$). 
More precisely, for $n=1$ and $d\geq3$,
the error bound in \autoref{MAINTHM} is $\bigl(S_{\infty,\kappa+\ve}(f)+S_{2,\kappa'+\ve}(f)\bigr)\cdot q^{-\frac12+\ve}$ with $\kappa=2d$ and $\kappa'=\frac12(d^2-1)$;
this is stronger than the error term in \cite[Theorem 1.1]{EHL18}, both in terms of the Sobolev norm and the power of $q$.\footnote{One 
may note that the $q$-exponent in \cite[Theorem 1.1]{EHL18} tends to our exponent $-\frac12+\ve$ if one lets the order of the 
Sobolev norm tend to $+\infty$.}
For $n=1$ and $d=2$,
the error bound in \autoref{MAINTHM} is $S_{\infty,4+\ve}(f)\, q^{-\frac12+\ve}+S_{2,\frac32+\ve}(f)\, q^{-\frac12+\theta+\ve}$, which is stronger than the bound in both \cite[Theorem 1.3]{LeeMarklof2017} and \cite[Remark 1.2]{EHL18}. 
Finally for $n=d=1$ the error bound in \autoref{MAINTHM} is $S_{\infty,2+\ve}(f)\, q^{-\frac12+\ve}$.
That case is quite easy; see \cite{Marklof2010horospheres} and \cite[Sec.\ 2.1]{EMSS2016}
(neither of those include the precise error term, but that is not at all difficult).

The basic set-up of the proof of \autoref{MAINTHM} is similar to the one in both
\cite{LeeMarklof2017} and \cite{EHL18}:
In \autoref{lem:scrRq_parameter} we 
give a parametrization of the set $\scrR_q$
of primitive matrices in terms of %
$\Gamma^0(q)\bs\SL_d(\Z)$ and $\GL_n(\Z/q\Z)$,
where $\Gamma^0(q)$ is a certain congruence subgroup of $\SL_d(\Z)$
(for $n=1$ this was done in \cite[Lemma 2.2]{EHL18}).
Furthermore, our first step is to Fourier expand the given test function on
$(\R/\Z)^{dn} \times \Gamma \backslash \Gamma {\rm H}$,
both with respect to the variable in the 
torus $(\R/\Z)^{dn}$ and with respect to the
torus fiber variable in
$\Gamma \backslash \Gamma {\rm H}$; see Section \ref{ss:Fourier}.
Then the main term in \eqref{MAINTHMres} is obtained by using
the asymptotic equidistribution of certain Hecke orbits in
$\SL_d(\Z)\bs\SL_d(\R)$, and for this an optimal error term is provided by
the results of Clozel, Oh and Ullmo \cite{ClozelOhUllmo2001};
see Section \ref{ss:Heckepts}. 

However, the task of bounding the contribution from %
the remaining sums is significantly more challenging
in the present paper where we deal with general $n\geq1$.
Here our first step is to apply bounds on the newly studied ``matrix Kloosterman sums''.
For prime moduli, key bounds on these matrix Kloosterman sums have been proved by Erd\'elyi and T\'oth \cite{mEaT2021};
for the case of %
higher prime power moduli
we prove non-trivial bounds in Section \ref{primepowerSEC},
by elementary but somewhat complicated computations.
Similar bounds have also, 
independently, been obtained by 
Erd\'elyi, T\'oth and Z\'abr\'adi in the recent paper \cite{ETZ}. 
The majorizing sum which arises from the application of the bounds on matrix Kloosterman sums is still non-trivial to control.
At this point we make use of a Hecke operator interpretation 
followed by an application of an integration formula by Rogers \cite{cR55} in the geometry of numbers, to arrive at a satisfactory final bound. This is carried out in Section~\ref{E2qfSEC}. 
The usage of Rogers' integration formula
in the present method
is also the reason for %
our improvement of the error bounds in 
\cite{LeeMarklof2017} and \cite{EHL18}
in the case $n=1$.

\subsection{Consequences of our main theorem and its proof} 
The case $n=1$ of the equidistribution result of
Einsiedler, Mozes, Shah and Shapira 
is known to have applications to the 
distribution of Frobenius numbers \cite{Marklof2010}, 
the distribution of shapes of lattices \cite{EMSS2016},
and to the distribution of metric parameters of random Cayley graphs of cyclic groups
\cite{MarklofStrombergsson2013}. 
Naturally, an effective version of this equidistribution result
can be expected to lead to information on the rate of convergence 
in these applications;
in \cite[Cor.\ 5.1]{EHL18} this was carried out for the case of
the diameter of random Cayley graphs of cyclic groups.
(Our improved error bound in \autoref{MAINTHM} should lead to an improved
exponent $\eta_d$ in \cite[Cor.\ 5.1]{EHL18}.)

In the present article, 
in Section \ref{ss:smallsolutions},
we give a new application of 
the equidistribution result,
this time for arbitrary $1\leq n\leq d$:
We obtain %
the limit distribution of the number of 
small solutions of a random system of linear congruences to a large modulus.
This can be seen as a variation, and in a sense a refinement,
of results by Str\"ombergsson and Venkatesh \cite{StrombergssonVenkatesh2005}
(see Remark \ref{SVspecREM}).

Furthermore, while first attempting to follow the strategy deployed in \cite{EHL18}, we came across an elementary counting problem in linear algebra, for which we were however unable to find an elementary solution.
That resulted in the technique we instead follow in Section~\ref{E2qfSEC}, using a Hecke operator interpretation followed by an application of Rogers' integration formula. 
As a by-product of our proof, we are able to satisfactorily solve the linear algebra problem, whose statement is as follows:
\begin{problem}\label{rankcountprobl}
For integers $1\leq r < n < d$, 
a prime $p \geq 3$ and an integer $1 \leq b \leq \frac {p-1}2$,
estimate the growth rate of
\begin{equation}\label{rankcount}
    \# \{ A \in \mathrm{M}_{d \times n}(\mathbb{Z}) \, : \, \|A\|_\infty \leq b \text{ and } \mathrm{rank}(A \bmod p) = r \}
\end{equation}
as $p$ gets large. Here $\|A\|_\infty$ denotes %
the maximum of the absolute values of the entries of $A$.
\end{problem}

\autoref{rankcountprobl} %
has been studied previously, for all values of $n$ and $d$, by Ahmadi and Shparlinski \cite{AhShS2007},
who obtained an asymptotic formula for \eqref{rankcount}
valid as $p\to\infty$ with $b$ in a restricted range.
In Section \ref{ss:byprod} we prove a sharp estimate on \eqref{rankcount},
valid for arbitrary $b$ and $p$.

\section{The primitive rational points on $\fH_q$}
\label{RqSEC}

As in the introduction, we keep $1\leq n\leq d$ fixed.
\begin{proof}[Proof of \autoref{fScharLEM}]
Let $q$ be a positive real number, %
and assume that $\fS_q$ is non-empty.
This means that there is some $V\in\M_{d\times n}(\R)$
such that $n_+(V)D(q)\in\Gamma{\rm H}$,
that is
\begin{align}\label{fScharLEMpf1}
\matr{q^{-n/d}I_d}{qV}{\bn}{qI_n}=\gamma\matr A{\bn}U{I_n}
\end{align}
for some $\gamma\in\Gamma$, $A\in\SL_d(\R)$
(if $n=d$: $A=I_n$)
and $U\in\M_{n\times d}(\R)$.
All the entries in the last $n$ columns of the matrix in the right hand side
are integers;
hence $q$ must be an integer, and $V=q^{-1}R$ for some $R\in\M_{d\times n}(\Z)$.
Also, left-multiplying the relation in \eqref{fScharLEMpf1} %
by $\gamma^{-1}$
and inspecting the bottom right $n\times n$ submatrix ($=I_n$), %
it follows that each of the standard basis vectors $\vece_1,\ldots,\vece_n$ of $\R^n$
is %
an integer linear
combination of the row vectors of $R$
and $q\vece_1,\ldots,q\vece_n$.
Hence $R$ is $q$-primitive.

Conversely, assume that $q$ is a positive integer and %
$R\in\M_{d\times n}(\Z)$ is $q$-primitive.
Then the homomorphism 
$\veca\mapsto\veca R\mod q$ from $\Z^d$ to $\Z^n/q\Z^n$ 
is surjective; hence its kernel $K$ is a subgroup of $\Z^d$ of index $q^n$.
Let $\veca_1,\ldots,\veca_d$ be a positively oriented $\Z$-basis of $K$,
where if $n=d$ we require $\veca_j:=q\vece_j$ for $j=1,\ldots,d$
(this is ok since $K=q\Z^n$ if $n=d$).
Let $A'$ be the $d\times d$ matrix with row vectors $\veca_1,\ldots,\veca_d$.
Then $\det(A')=q^n$,
and for each $\veca_j$ there is a unique $\vecb_j\in\Z^n$ 
such that $\veca_jR+q\vecb_j=\bn$.
Also, since $R$ is $q$-primitive,
there exist $\vecc_1,\ldots,\vecc_n\in\Z^{d+n}$
such that $\vecc_j\begin{pmatrix} R\\ qI_n\end{pmatrix}=\vece_j$ ($j=1,\ldots,n$).
Now let $\eta$ %
be the square matrix with row vectors 
$(\veca_1,\vecb_1),\ldots,(\veca_d,\vecb_d),\vecc_1,\ldots,\vecc_n$;
then
\begin{align*}
\eta\matr{q^{-n/d}I_d}{R}{\bn}{qI_n}=\matr {q^{-n/d}A'}{\bn}U{I_n}
\end{align*}
for some %
$U\in\M_{n\times d}(\R)$.
Here $\det(q^{-n/d}A')=1$,
and if $n=d$ then $q^{-1}A'=I_n$.
Hence the above matrix lies in ${\rm H}$, and 
$\det(\eta)=1$, i.e.\ $\eta\in\Gamma$.
Therefore $\tn_+(q^{-1}R)D(q)\in\fS_q$.
\end{proof}

It will be useful to know the cardinality of $\fS_q$, i.e.\ the cardinality of $\scrR_q$.
We write $\mathbb{Z}^+$ for the set of positive integers.
\begin{lemma}\label{RqcardLEM}
$\forall q \in \Z^+, \#\scrR_q=q^{dn}\prod_{p\mid q}\prod_{j=d+1-n}^d(1-p^{-j})$.
\end{lemma}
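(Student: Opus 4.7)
The plan is to establish multiplicativity, then reduce to counting rank-$n$ matrices over $\F_p$.

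First, I will show that the function $q \mapsto \#\scrR_q$ is multiplicative. By the Chinese Remainder Theorem, writing $q = \prod p^{k_p}$, the natural map
\begin{equation*}
\M_{d\times n}(\Z/q\Z) \longrightarrow \prod_{p\mid q}\M_{d\times n}(\Z/p^{k_p}\Z)
\end{equation*}
is a bijection, and the rows of $R$ generate $(\Z/q\Z)^n$ iff the rows of $R \bmod p^{k_p}$ generate $(\Z/p^{k_p}\Z)^n$ for each prime $p\mid q$. Thus $\#\scrR_q = \prod_{p\mid q}\#\scrR_{p^{k_p}}$, and it suffices to compute $\#\scrR_{p^k}$ for each prime power.

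Next I will reduce from $p^k$ to $p$ by a lifting argument. I claim that a matrix $R \in \M_{d\times n}(\Z/p^k\Z)$ is $p^k$-primitive iff $R \bmod p$ is $p$-primitive. One direction is trivial. For the converse, if $R \bmod p$ has row span equal to $\F_p^n$, then the row span of $R$ in $(\Z/p^k\Z)^n$ together with $p \cdot (\Z/p^k\Z)^n$ equals all of $(\Z/p^k\Z)^n$; by Nakayama's lemma (or induction on $k$, using that the row span then reaches modulo $p^2$, etc.) it must equal $(\Z/p^k\Z)^n$. Consequently the reduction map $\scrR_{p^k} \to \scrR_p$ is surjective with all fibres of size $p^{(k-1)dn}$ (arbitrary lifts of entries), giving
\begin{equation*}
\#\scrR_{p^k} = p^{(k-1)dn}\cdot\#\scrR_p.
\end{equation*}

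It remains to compute $\#\scrR_p$, i.e.\ the number of $d\times n$ matrices over $\F_p$ whose rows span $\F_p^n$, equivalently whose rank equals $n$, equivalently whose $n$ columns are linearly independent vectors in $\F_p^d$. Choosing the columns one by one, this count is
\begin{equation*}
\prod_{i=0}^{n-1}(p^d - p^i) = p^{n(n-1)/2}\prod_{j=d-n+1}^{d}(p^j - 1).
\end{equation*}
A short arithmetic check, using $\sum_{j=d-n+1}^{d} j = dn - \tfrac{n(n-1)}{2}$, rewrites this as $p^{dn}\prod_{j=d+1-n}^{d}(1-p^{-j})$. Combining with the lifting identity and the multiplicativity then yields the stated formula.

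No step looks truly difficult; the only point requiring a small argument is the Nakayama-type claim that $p$-primitivity lifts to $p^k$-primitivity, and this is easy. Everything else is a clean count.
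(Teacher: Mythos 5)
Your proof is correct and follows essentially the same route as the paper's: multiplicativity via the Chinese Remainder Theorem, reduction of the prime-power case to the prime case by showing $p^k$-primitivity is equivalent to $p$-primitivity of the reduction mod $p$, and then the column-by-column count $\prod_{i=0}^{n-1}(p^d-p^i)$. The only (immaterial) difference is in the lifting step, where you invoke Nakayama's lemma while the paper observes that a full-rank $n\times n$ submatrix mod $p$ has unit determinant in $\Z/p^k\Z$; both arguments are valid.
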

\begin{proof}
It follows from the Chinese Remainder Theorem that
the function $q\mapsto\#\scrR_q$ is multiplicative;
hence %
it suffices to prove the lemma 
when $q$ is a prime power, say $q=p^r$ ($r\geq1$).
Now, 
for any $R\in\M_{d\times n}(\Z/q\Z)$
such that $R\mod p$ is $p$-primitive,
$R\mod p$ has some $n\times n$ submatrix which belongs to $\GL_n(\Z/p\Z)$;
therefore %
the determinant of the corresponding submatrix of $R$ itself
is a unit in $\Z/q\Z$, viz., that submatrix belongs to $\GL_n(\Z/q\Z)$
and $R$ is $q$-primitive.
Hence $R\in\M_{d\times n}(\Z/q\Z)$ is $q$-primitive 
if and only if $R$ mod $p$ is $p$-primitive,
and so %
$\#\scrR_q=p^{(r-1)dn}\#\scrR_p$.

It remains to prove the lemma in the case $q=p$, a prime.
Let us write $\F_p$ for the field $\Z/p\Z$.
A matrix in $\M_{d\times n}(\F_p)$ is $p$-primitive 
if and only if it has full rank,
that is, if and only if its columns are linearly independent.
Note that there are exactly 
$p^d-1$ full rank matrices in $\M_{d\times 1}(\F_p)$.
Furthermore, for any $1\leq \ell<d$, given any 
matrix $A\in\M_{d\times\ell}(\F_p)$ of full rank,
the column span of $A$ has cardinality $p^\ell$,
and hence there are exactly $p^d-p^\ell$ ways to choose a column to the right of $A$
to form a full rank matrix in $\M_{d\times(\ell+1)}(\F_p)$.
Hence $\#\scrR_p=\prod_{\ell=0}^{n-1}(p^d-p^{\ell})$,
and the lemma is proved.
\end{proof}

In the next lemma we give a parametrization of $\scrR_q$ which will be crucial in our proof of the main theorem.
If $n<d$, then we define $\Gamma^0(q)$ to be the 
following congruence subgroup of ${\rm SL}_d(\mathbb{Z})$: 
\begin{equation}\label{e:Gamma0dnq_def}
\Gamma^0(q) = \left\{\matr AB{\trans C}D\in {\rm SL}_d(\mathbb{Z})\col
A\in\M_{d-n},\: B,C\in\M_{(d-n)\times n},\: D\in\M_n,\: 
B\equiv\bn\mod q\right\},
\end{equation}
and we fix a set $\mathcal{B}_q$ of representatives for $\Gamma^0(q) \backslash {\rm SL}_d(\mathbb{Z})$. 
When $d=n$, we set 
\begin{equation}
\Gamma^0(q) = {\rm SL}_n(\mathbb{Z})
\end{equation}
and $\mathcal{B}_q:=\{I_n\}$. 
The following lemma generalizes \cite[Lemma 2.2]{EHL18}.
\begin{lemma}\label{lem:scrRq_parameter}
The map 
\begin{equation}
\mathcal{B}_q \times {\rm GL}_n(\mathbb{Z}/q\mathbb{Z}) 
\to {\rm M}_{d\times n}(\mathbb{Z}/q\mathbb{Z})
\end{equation}
given by 
\begin{align}\label{e:map_scrRq_def}
&\hspace{90pt}
\left<\gamma, U\right> \mapsto \gamma^{-1} \bpm \vecnull \\ U\ebpm 
\hspace{50pt} (\gamma\in\scrB_q,\: U\in{\rm GL}_n(\mathbb{Z}/q\mathbb{Z})),
\end{align}
is a bijection onto $\mathcal{R}_q\subset {\rm M}_{d\times n}(\mathbb{Z}/q\mathbb{Z})$. 
\end{lemma}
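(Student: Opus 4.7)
The proof breaks naturally into verifying the image lands in $\scrR_q$, then proving injectivity and surjectivity separately. The map is obviously well-defined on $\scrB_q \times \GL_n(\Z/q\Z)$ (the representative $\gamma$ is fixed, and $\gamma^{-1}$ is an integer matrix, so reduction mod $q$ makes sense). To see the image lies in $\scrR_q$, note that multiplication by $\gamma^{-1}$ on the left is a $\Z$-linear automorphism of $\Z^d$, hence an automorphism of $(\Z/q\Z)^d$; thus the rows of $\gamma^{-1}\scmatr{\bn}{U}$ generate the same subgroup of $(\Z/q\Z)^n$ as the rows of $\scmatr{\bn}{U}$, and the latter rows generate $(\Z/q\Z)^n$ since $U\in\GL_n(\Z/q\Z)$.

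For \textbf{injectivity}, suppose $\gamma_1^{-1}\scmatr{\bn}{U_1} \equiv \gamma_2^{-1}\scmatr{\bn}{U_2} \pmod q$ with $\gamma_i\in\scrB_q$ and $U_i\in\GL_n(\Z/q\Z)$. Set $h = \gamma_2\gamma_1^{-1}\in\SL_d(\Z)$ and write it as a block matrix $\sm A & B \\ \trans{C} & D \esm$ with $B$ of size $(d-n)\times n$. Then $h\scmatr{\bn}{U_1} = \scmatr{BU_1}{DU_1} \equiv \scmatr{\bn}{U_2} \pmod q$. Since $U_1$ is invertible mod $q$, $BU_1\equiv\bn$ forces $B\equiv\bn\pmod q$, so $h\in\Gamma^0(q)$. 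Thus $\gamma_1,\gamma_2$ represent the same coset in $\Gamma^0(q)\bs\SL_d(\Z)$, hence $\gamma_1=\gamma_2$; then $h=I$, $D=I$, and $U_1=U_2$. (When $n=d$ the block $B$ is absent and the argument collapses correctly, since $\Gamma^0(q)=\SL_n(\Z)$ and $\scrB_q=\{I_n\}$.)

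For \textbf{surjectivity}, given $R\in\scrR_q$, I plan to produce $\gamma\in\SL_d(\Z)$ with $\gamma R\equiv\scmatr{\bn}{U}\pmod q$ for some $U$; invertibility of $\gamma$ mod $q$ then forces $U\in\GL_n(\Z/q\Z)$ (since $\gamma R$ is primitive iff $R$ is, which forces the rows of $U$ to span $(\Z/q\Z)^n$). Primitivity of $R$ says that the $\Z$-linear map $\Z^d\to(\Z/q\Z)^n$, $\veca\mapsto \veca R\bmod q$, is surjective, so its kernel $K$ is a sublattice of $\Z^d$ of index $q^n$ containing $q\Z^d$, with $\Z^d/K\cong(\Z/q\Z)^n$. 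By the elementary divisor (Smith normal form) theorem applied to the pair $q\Z^d\subseteq K\subseteq\Z^d$, there is a $\Z$-basis $\vecv_1,\ldots,\vecv_d$ of $\Z^d$ such that $\vecv_1,\ldots,\vecv_{d-n}\in K$ while $q\vecv_{d-n+1},\ldots,q\vecv_d\in K$. Taking $\gamma$ to be the matrix with rows $\vecv_1,\ldots,\vecv_d$ (flipping the sign of $\vecv_d$ if necessary to get determinant $1$) puts $\gamma\in\SL_d(\Z)$ with the top $d-n$ rows of $\gamma R$ vanishing mod $q$, which is exactly what is needed. Finally, replacing $\gamma$ by its unique representative in $\scrB_q$ (from the left) adjusts $U$ but keeps the conclusion, by the computation already carried out in the injectivity step.

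The only nontrivial step is the surjectivity reduction via elementary divisors; I expect this to be the \emph{main obstacle}, but it is standard once one identifies the correct lattice $K$. The case $n=d$ is automatic: $\scrR_q=\GL_n(\Z/q\Z)$ by \autoref{RqcardLEM} (or by inspection), and the map becomes the identity on $\GL_n(\Z/q\Z)$.
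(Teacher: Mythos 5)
Your proof is correct. The injectivity half is essentially the paper's argument: the block computation $h\scmatr{\bn}{U_1}=\scmatr{BU_1}{DU_1}$ for $h=\gamma_2\gamma_1^{-1}$, combined with the invertibility of $U_1$ mod $q$, forces $h\in\Gamma^0(q)$ and hence equality of coset representatives. Where you genuinely diverge is surjectivity. The paper applies the Smith normal form to $R$ as a matrix \emph{over} $\Z/q\Z$, writing $R=\delta\scmatr{\bn}{D\eta}$ with $\delta\in\GL_d(\Z/q\Z)$, normalizes $\delta$ into $\SL_d(\Z/q\Z)$, and then invokes the surjectivity of the reduction map $\SL_d(\Z)\to\SL_d(\Z/q\Z)$ (citing Shimura) to produce an integral lift before passing to the representative in $\scrB_q$. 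You instead work integrally from the outset: you apply the elementary divisor theorem to the chain $q\Z^d\subseteq K\subseteq\Z^d$, where $K$ is the kernel of $\veca\mapsto\veca R\bmod q$ (of index $q^n$ by primitivity), extract a $\Z$-basis of $\Z^d$ whose first $d-n$ vectors lie in $K$, and assemble these rows into $\gamma\in\SL_d(\Z)$ directly, so that $\gamma R\equiv\scmatr{\bn}{U}\bmod q$. This buys you a more self-contained argument that avoids the strong-approximation input, and it closely mirrors the paper's own proof of \autoref{fScharLEM}; the small price is that you must identify the invariant factors of $\Z^d/K\cong(\Z/q\Z)^n$ (or at least note that at most $n$ of them exceed $1$). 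The details you defer at the end — that primitivity forces $U\in\GL_n(\Z/q\Z)$, and that replacing $\gamma$ by its representative in $\scrB_q$ only changes $U$ to $DU$ with $D$ invertible mod $q$ — do check out by the same block computation used for injectivity.
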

(In the case $n=d$, the matrix ``$\scmatr{\bn}U$'' in \eqref{e:map_scrRq_def}
should be interpreted as ``$U$''.)

\begin{proof}
If $n=d$
then $\scrR_q=\GL_n(\Z/q\Z)$ and $\scrB_q=\{I_n\}$
and the lemma is trivial.

From now on we assume that $1\leq n < d$. 
It is clear that the image of the map in \eqref{e:map_scrRq_def}
is contained in $\scrR_q$.
To prove that the map is surjective,
let $R\in\scrR_q$ be given.
Then by the Smith Normal Form Theorem,
there exist $\delta\in\GL_d(\Z/q\Z)$ and $\eta\in\GL_n(\Z/q\Z)$
and a diagonal matrix $D\in\M_n(\Z/q\Z)$
such that 
\begin{align*}
R=\delta\cmatr{\bn}{D}\eta=\delta\cmatr{\bn}{D\eta}.
\end{align*}
Note that the above identity remains true if we replace $\delta$ by $\delta \diag[u,1,\cdots,1]$
for any $u\in(\Z/q\Z)^\times$;
hence we may arrange 
that 
$\delta\in\SL_n(\Z/q\Z)$.
The reduction map from $\SL_n(\Z)$ to $\SL_n(\Z/q\Z)$ is surjective 
(see, e.g., the proof of \cite[Lemma 1.38]{Shi94});
hence there exists a lift $\delta'\in\SL_n(\Z)$ of $\delta$.
Let $\gamma$ be the unique element in $\scrB_q\cap\Gamma^0(q){\delta'}^{-1}$;
then
$\delta'=\gamma^{-1}\matr ABC{D'}$
for some $\matr ABC{D'}\in\Gamma^0(q)$, and so
\begin{align*}
R=\gamma^{-1}\cmatr{\bn}{D'D\eta}
\qquad\text{in }\: \M_{d\times n}(\Z/q\Z).
\end{align*}
But $R\in\scrR_q$ implies $\gamma R\in\scrR_q$;
hence the rows of $D'D\eta$ generate $(\Z/q\Z)^n$, that is, $D'D\eta\in\GL_n(\Z/q\Z)$,
and we have thus proved that $R$ lies in the image of the map
in \eqref{e:map_scrRq_def}.

It remains to verify that the map is injective.
Thus we assume that the two pairs
$\langle\gamma,U\rangle$ and
$\langle\gamma',U'\rangle$ map to the same element in $\scrR_q$.
Then
\begin{align*}
\gamma'\gamma^{-1}\cmatr{\bn}{U}=\cmatr{\bn}{U'}
\qquad\text{in }\: \M_{d\times n}(\Z/q\Z).
\end{align*}
This forces $\gamma'\gamma^{-1}\in\Gamma^0(q)$,
and since $\gamma,\gamma'\in\scrB_q$
it follows that $\gamma=\gamma'$.
Hence also $U=U'$, and the injectivity is proved.
\end{proof}

In the next lemma we give a formula which will be useful
when applying \autoref{lem:scrRq_parameter}
to re-express the sum in \eqref{SqfDEF}.
Let us introduce, for $U \in  {\rm M}_{n\times d}(\mathbb{R})$, 
\begin{equation}n_-(U) = \bpm I_d & \vecnull \\ U & I_n\ebpm.
\end{equation}
We also introduce the map
\begin{align}\label{tnnDEF}
\widetilde{n}_-:\M_{n\times d}(\R/\Z)\to\Gamma\bs G
\end{align}
by setting
$\tn_-(U):=\Gamma n_-(U')$ where $U'$ is any lift
to $\M_{n\times d}(\R)$ of $U\in\M_{n\times d}(\R/\Z)$
(this is analogous to $\tn_+$ in \eqref{tnpDEF}).

\begin{lemma}\label{lem:mtx_relation}
Assume that $1\leq n \leq d$. 
Let $\gamma\in \mathcal{B}_q$ and $U\in {\rm GL}_n(\mathbb{Z}/q\mathbb{Z})$, 
and set
\begin{align}\label{lem:mtx_relationDEFS1}
&R=\gamma^{-1} \cmatr{\bn}U\in\scrR_q
\qquad\text{and}\qquad
S= \bigl(\bn\hspace{7pt} U^{-1}\bigr) \in\M_{n\times d}(\Z/q\Z) ;
\\\label{Dqdef}
& D_q = \begin{cases}
q^{-\frac nd}\bpm I_{d-n} & \\ & qI_n\ebpm & \text{ when } n< d, \\
I_n & \text{ when } n=d. 
\end{cases} 
\end{align}
Then 
\begin{equation}\label{e:mtx_relation}
\tn_+(q^{-1} R) D(q) = 
\tn_-(q^{-1}S)  \matr{D_q\gamma}{\bn}{\bn}{I_n}.
\end{equation}
\end{lemma}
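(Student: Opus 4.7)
The plan is to verify \eqref{e:mtx_relation} in $\GaG$ by exhibiting an explicit $\lambda\in\Gamma$ that carries one representative to the other. First, fix integer lifts $U_0,\tilde V\in\M_n(\Z)$ of $U$ and $U^{-1}$, and take $R=\gamma^{-1}\sm\bn\\U_0\esm\in\M_{d\times n}(\Z)$ and $S_0=(\bn\:\tilde V)\in\M_{n\times d}(\Z)$ as lifts of the matrices in \eqref{lem:mtx_relationDEFS1}. Direct block multiplication should give
\[
n_+(q^{-1}R)D(q)=\bpm q^{-n/d}I_d & R \\ \bn & qI_n\ebpm,\qquad
n_-(q^{-1}S_0)\bpm D_q\gamma & \bn \\ \bn & I_n\ebpm=\bpm D_q\gamma & \bn \\ q^{-1}S_0 D_q\gamma & I_n\ebpm.
\]

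Set $\lambda\in G$ equal to the first of these times the inverse of the second. Another block computation yields
\[
\lambda=\bpm q^{-n/d}(D_q\gamma)^{-1}-q^{-1}RS_0 & R \\ -S_0 & qI_n\ebpm,
\]
and it remains to verify $\lambda\in\Gamma=\SL_{d+n}(\Z)$. The determinant is $1$, since each factor in the definition of $\lambda$ has determinant $1$ (for $D(q)$ and $D_q$ this uses the balanced exponents $q^{-n/d}$ and $q$). Three of the four blocks---namely $R$, $-S_0$, and $qI_n$---are plainly integer, so the problem reduces to checking integrality of the top-left block.

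This last check is the only real obstacle, and it is where the arithmetic content of the identity appears. When $n<d$, one has $q^{-n/d}(D_q\gamma)^{-1}=\gamma^{-1}\sm I_{d-n}&\bn\\\bn&q^{-1}I_n\esm$ and $q^{-1}RS_0=\gamma^{-1}\sm\bn&\bn\\\bn&q^{-1}U_0\tilde V\esm$; writing $U_0\tilde V=I_n+qW$ with $W\in\M_n(\Z)$ (possible since $U_0\tilde V\equiv I_n\pmod q$), the two $q^{-1}I_n$ contributions cancel, and the top-left block becomes $\gamma^{-1}\sm I_{d-n}&\bn\\\bn&-W\esm$, an integer matrix. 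The case $n=d$ is strictly simpler: with $D_q=I_n$, $\gamma=I_n$, $R=U_0$, the top-left block collapses to $q^{-1}(I_n-U_0\tilde V)\in\M_n(\Z)$ by the same congruence. Hence $\lambda\in\Gamma$, and \eqref{e:mtx_relation} follows.
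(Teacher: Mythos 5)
Your proof is correct and follows essentially the same route as the paper's: both exhibit the transition matrix between the two representatives explicitly in block form and verify it lies in $\Gamma$ by checking the determinant and the integrality of each block, with the key arithmetic input being the congruence $U_0\tilde V\equiv I_n\pmod q$ (which in the paper appears as the combination $\gamma R'\equiv\scmatr{\bn}{U}$ and $q^{\frac nd-1}S'D_q$ having rightmost block $\equiv U^{-1}\pmod q$). You multiply the factors in the opposite order from the paper, which conveniently makes three of the four blocks trivially integral and isolates all the arithmetic in the top-left block, but this is only a cosmetic difference.
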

(In the case $n=d$, the matrices
``$\scmatr{\bn}U$'' and 
``$(\bn\hspace{5pt} U^{-1})$''   %
in \eqref{lem:mtx_relationDEFS1}
should be interpreted as ``$U$'' and ``$U^{-1}$'', respectively.
The matrix $D_q\in\SL_d(\R)$ in \eqref{Dqdef} should not be mixed up with the matrix $D(y)$ in ${\rm G}=\SL_{d+n}(\R)$
defined in \eqref{Dydef}.)
\begin{proof}
Our task is to prove \eqref{e:mtx_relation}, or equivalently
\begin{align}\label{lem:mtx_relationPF1}
n_-(q^{-1}S')  \matr{D_q\gamma}{\bn}{\bn}{I_n} 
\big(n_+(q^{-1} R') D(q)\big)^{-1} \in \Gamma,
\end{align}
where $R'$ and $S'$ are arbitrary lifts
of $R$ to $\M_{d\times n}(\Z)$ and $S$ to $\M_{n\times d}(\Z)$, respectively.
The matrix in \eqref{lem:mtx_relationPF1} clearly has determinant one;
hence it remains to prove that all its entries are integers.
By a quick computation, the matrix 
is seen to equal
\begin{align}\label{lem:mtx_relationPF2}
\begin{pmatrix}q^{\frac nd}D_q\gamma & -q^{\frac nd-1}D_q\gamma R'
\\[5pt]
q^{\frac nd-1} S'D_q\gamma
& -q^{\frac nd-2}\, S'D_q\gamma R'+q^{-1}I_n
\end{pmatrix}.
\end{align}
Here the top left block matrix is clearly in $\M_{d\times d}(\Z)$,
and using $\gamma R'\equiv\cmatr{\bn}U\mod q$,
the top right block matrix is seen to be in $\M_{d\times n}(\Z)$;
similarly the bottom left block matrix is in $\M_{n\times d}(\Z)$.
Finally, one verifies that $q^{\frac nd-1} S' D_q$
is in $\M_{n\times d}(\Z)$ with its rightmost $n\times n$ submatrix being
$\equiv U^{-1}\mod q$;
hence, since also
$\gamma R'\equiv\cmatr{\bn}U\mod q$,
it follows that $q^{\frac nd-1} S' D_q \gamma R'\in I_n+q\cdot \M_{n}(\Z)$.
This implies that the bottom right block matrix in \eqref{lem:mtx_relationPF2}
is in $\M_n(\Z)$, and the lemma is proved.
\end{proof}

\section{Fourier analysis on the space $\Gamma\bs\Gamma {\rm H}$}\label{ss:Fourier}
The material in the present section generalizes %
\cite[Sec.\ 4]{Stromb2015}.
Throughout the section we assume $1\leq n<d$.

We will parametrize the group ${\rm H}$ using the following diffeomorphism:
\begin{align}\label{Hparametrization}
\SL_d(\R)\times\M_{n\times d}(\R)\stackrel{\sim}{\longrightarrow} {\rm H},
\qquad (g,X)\mapsto\matr{I_d}{\bn}X{I_n}\matr g{\bn}{\bn}{I_n}
=\matr g{\bn}{Xg}{I_n}.
\end{align}
Note that then
$\Gamma\cap {\rm H}$ corresponds to $\SL_d(\Z)\times\M_{n\times d}(\Z)$,
and the multiplication law in ${\rm H}$ is given by
\begin{align}
(g,X)(g',X')=(gg',X+X'g^{-1}).
\end{align}
In particular, if $F$ is a left $\Gamma\cap {\rm H}$ invariant function on ${\rm H}$
(or equivalently, a function on $\Gamma\bs\Gamma {\rm H}$),
then in terms of our parametrization we have
$F(g,X+M)\equiv F(g,X)$ for all $M\in\M_{n\times d}(\Z)$ \footnote{We 
also have $F(\gamma g,X\gamma^{-1})\equiv F(g,X)$ for all $\gamma\in\SL_d(\Z)$.},
which means that for any fixed $g\in\SL_d(\R)$,
$X\mapsto F(g,X)$ is a function on the torus
$\M_{n\times d}(\R/\Z)$.
We write 
$\hF(g;M)$ for the Fourier coefficients in the torus variable:
\begin{align}\label{hFgMdef}
\hF(g;M)=\int_{\M_{n\times d}(\R/\Z)} F(g, X)\, e^{-2\pi i \,{\rm tr}(\trans MX)} dX,
\end{align}
where $dX$ denotes the Lebesgue measure on 
$\M_{n\times d}(\R)\cong(\R^d)^n$.
Thus 
for any $k>\frac12nd$ and any $F\in {\rm C}^k(\Gamma\bs\Gamma {\rm H})$, we have
\cite[Theorem 3.2.16]{Grafakos}
\begin{align}\label{e:Fourier_ASL}
F(g, X) = \sum_{M\in {\rm M}_{n\times d}(\mathbb{Z})} \widehat{F}(g; M) e^{2\pi i {\rm tr}(\trans M X)},
\end{align}
with a uniform absolute convergence\footnote{For any fixed ordering of $\M_{n\times d}(\Z)$.}
over $(g,X)$ in any compact subset of ${\rm H}$.

\begin{lemma}\label{lem:Fouriercoeff_semiautomorphic}
Let $F\in{\rm C}(\Gamma\bs\Gamma {\rm H})$.
Then for any 
$\gamma\in {\rm SL}_{d}(\mathbb{Z})$, $g\in {\rm SL_d}(\mathbb{R})$ and $M\in {\rm M}_{n\times d}(\mathbb{Z})$, 
\begin{equation}\label{e:Fouriercoeff_semiautomorphic}
\widehat{F}(\gamma g; M) = \widehat{F}(g; M\trans\gamma^{-1}). 
\end{equation}
\end{lemma}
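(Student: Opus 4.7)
The plan is direct. The proof rests on combining two inputs: the left $\Gamma\cap {\rm H}$-invariance of $F$ (and in particular the identity $F(\gamma g, X\gamma^{-1})=F(g,X)$ noted in the footnote) together with a change of variables in the Fourier integral \eqref{hFgMdef}.

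First I would verify the invariance identity $F(\gamma g, X\gamma^{-1})=F(g,X)$ carefully from the parametrization \eqref{Hparametrization} and the multiplication law $(g,X)(g',X')=(gg',X+X'g^{-1})$: the element $(\gamma,\bn)\in\Gamma\cap {\rm H}$ acts on the left by $(\gamma,\bn)(g,X)=(\gamma g,X\gamma^{-1})$, so left-invariance gives the claimed relation. Replacing $X$ by $X\gamma$ rewrites this as $F(\gamma g,X)=F(g,X\gamma)$, which is the form I will actually need.

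Next I substitute into the definition:
\begin{align*}
\widehat{F}(\gamma g; M)=\int_{\M_{n\times d}(\R/\Z)} F(\gamma g, X)\, e^{-2\pi i \,\tr(\trans MX)}\, dX
=\int_{\M_{n\times d}(\R/\Z)} F(g, X\gamma)\, e^{-2\pi i \,\tr(\trans MX)}\, dX.
\end{align*}
The map $X\mapsto Y=X\gamma$ is a measure-preserving self-bijection of the torus $\M_{n\times d}(\R/\Z)$, since $\gamma\in\SL_d(\Z)$ acts by integral linear change of coordinates with determinant $\pm1$. Applying it and using the cyclic property of the trace together with $\trans(M\trans\gamma^{-1})=\gamma^{-1}\trans M$, the exponent transforms as
\begin{align*}
\tr(\trans M Y\gamma^{-1})=\tr(\gamma^{-1}\trans M Y)=\tr\bigl(\trans(M\trans\gamma^{-1})\, Y\bigr),
\end{align*}
which identifies the resulting integral as $\widehat{F}(g;M\trans\gamma^{-1})$, proving \eqref{e:Fouriercoeff_semiautomorphic}.

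There is no real obstacle here; the only point requiring a moment's care is the bookkeeping for the transpose in the trace-exponent, which is why I would write out the trace manipulation explicitly. Continuity of $F$ suffices for all the integrals to make sense, and no appeal to absolute convergence of the Fourier series \eqref{e:Fourier_ASL} is needed.
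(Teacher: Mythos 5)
Your proof is correct and follows exactly the same route as the paper's (terser) proof: the identity $F(\gamma g,X)=F(g,X\gamma)$ coming from left $\Gamma\cap{\rm H}$-invariance, the measure-preserving substitution $Y=X\gamma$ in \eqref{hFgMdef}, and the trace manipulation $\tr(\trans M Y\gamma^{-1})=\tr\bigl(\trans(M\trans\gamma^{-1})Y\bigr)$. The only difference is that you spell out the details the paper leaves implicit.
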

\begin{proof}
This follows from the formula \eqref{hFgMdef} 
and $F(\gamma g,X)\equiv F(g,X\gamma)$,
and the fact that the map $X\mapsto X\gamma$ is a diffeomorphism of the 
torus $\M_{n\times d}(\R/\Z)$ onto itself preserving the Lebesgue measure.
\end{proof}
For $1\leq i \leq n$ and $1\leq j \leq d$, let $E_{i, j}\in {\rm M}_{n\times d}(\mathbb{R})$ denote the matrix with a $1$ at the $(i, j)$th 
position and zeros elsewhere. 
We define the following differential operator:
\begin{equation}\label{e:Liealgebra_diff_def}
(E_{i, j} F)(g, X) = \left.\frac{\partial}{\partial t} F\bigl((g, X)(I_d, tE_{i,j})\bigr)\right|_{t=0}.
\end{equation}
Using $(g,X)(I_d,tE_{i,j})=(g,X+tE_{i,j}g^{-1})$
and the chain rule,
we get
\begin{equation}\label{e:Liealgebra_diff}
(E_{i, j} F)(g, X)
= \sum_{\ell=1}^d \og_{j,\ell} \frac{\partial F}{\partial x_{i,\ell}}(g, X), 
\end{equation}
where $X=(x_{i,j})_{1\leq i\leq n,1\leq j\leq d}$ and $g^{-1}=(\og_{i,j})_{1\leq i,j\leq d}$.

\begin{lemma}\label{lem:Fouriercoeff_bound}
Let $0\leq\kappa\leq k$ with $k\in\Z$.
Then for any 
$F\in {\rm C}_b^k(\Gamma\bs\Gamma {\rm H})$, 
$g\in\SL_d(\R)$ and $M\in\M_{n\times d}(\Z)$, 
\begin{equation}\label{e:Fouriercoeff_bound} 
\bigl|\widehat{F}(g; M)\bigr|
\ll_k\frac{S_{\infty,\kappa}(F)}{1+\|M \trans g^{-1}\|_\infty^{\kappa}}.
\end{equation}
\end{lemma}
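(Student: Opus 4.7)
The plan is to prove the bound via iterated integration by parts in the torus variable, using the differential operators $E_{i,j}$ introduced just above.

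First I would derive the key identity: for $F\in {\rm C}_b^1(\Gamma\bs\Gamma{\rm H})$, integrating by parts in the definition \eqref{hFgMdef} gives
\begin{equation*}
\int_{\M_{n\times d}(\R/\Z)} \frac{\partial F}{\partial x_{i,\ell}}(g,X)\, e^{-2\pi i\,\tr(\trans M X)}\,dX = 2\pi i\, M_{i,\ell}\,\hF(g;M),
\end{equation*}
since $\partial_{x_{i,\ell}}\tr(\trans MX)=M_{i,\ell}$. Combining this with \eqref{e:Liealgebra_diff} yields, for each $(i,j)$,
\begin{equation*}
\widehat{E_{i,j}F}(g;M) = 2\pi i\sum_{\ell=1}^{d} \og_{j,\ell}\,M_{i,\ell}\,\hF(g;M) = 2\pi i\,(M\,\trans g^{-1})_{i,j}\,\hF(g;M).
\end{equation*}
By \autoref{lem:Fouriercoeff_semiautomorphic} applied in its derivation (or more directly because $E_{i,j}F$ again lies in ${\rm C}_b^{k-1}(\Gamma\bs\Gamma{\rm H})$), this identity iterates: choosing indices $(i,j)$ that maximize $|(M\trans g^{-1})_{i,j}|$ and applying the corresponding $E_{i,j}$ exactly $k$ times gives
\begin{equation*}
(2\pi)^k\,\|M\trans g^{-1}\|_\infty^{\,k}\,|\hF(g;M)| \le \bigl\|E_{i,j}^{k}F\bigr\|_{\infty} \le S_{\infty,k}(F).
\end{equation*}

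Together with the trivial bound $|\hF(g;M)|\le\|F\|_\infty\le S_{\infty,0}(F)\le S_{\infty,k}(F)$, this proves the lemma when $\kappa=k$ is an integer, namely
\begin{equation*}
|\hF(g;M)| \ll_k \frac{S_{\infty,k}(F)}{1+\|M\trans g^{-1}\|_\infty^{\,k}}.
\end{equation*}

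For a fractional exponent $\kappa$ with $k<\kappa<k+1$, I would apply the above bound at both integer exponents $k$ and $k+1$ and interpolate geometrically with weights $k+1-\kappa$ and $\kappa-k$; by the definition \eqref{fracSobnorm} the numerator becomes $S_{\infty,\kappa}(F)$, while the elementary inequality $(1+N^k)^{k+1-\kappa}(1+N^{k+1})^{\kappa-k}\gg 1+N^\kappa$ (checked separately for $N=\|M\trans g^{-1}\|_\infty\le 1$ and $N\ge 1$) delivers the denominator in \eqref{e:Fouriercoeff_bound}. The only mildly delicate point is the iteration step, where one must verify that $E_{i,j}F$ still satisfies the hypotheses needed to integrate by parts one more time; this is immediate because $E_{i,j}$ is a right-invariant operator on ${\rm H}$ and hence preserves left $\Gamma\cap{\rm H}$-invariance and the required smoothness.
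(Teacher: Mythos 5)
Your proposal is correct and follows essentially the same route as the paper: integration by parts against $E_{i,j}$ to produce the factor $2\pi i\,(M\trans g^{-1})_{i,j}$, iteration $k$ times with the index maximizing $|(M\trans g^{-1})_{i,j}|$, combination with the trivial bound, and geometric interpolation for fractional $\kappa$ exactly as in \eqref{fracSobnorm}. The only detail worth making explicit is that $\|E_{i,j}^kF\|_\infty\le S_{\infty,k}(F)$ presupposes the basis $\scrB$ contains the generators corresponding to the $E_{i,j}$, which costs at most a bounded factor absorbed by $\ll_k$.
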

\begin{proof}
By \eqref{hFgMdef} and \eqref{e:Liealgebra_diff}, we have 
for any $1\leq i\leq n$ and $1\leq j\leq d$,
\begin{align}
(\widehat{E_{i, j} F})(g; M)
= \sum_{\ell=1}^d \og_{j,\ell}
\int_{{\rm M}_{n\times d}(\R/\Z)}\frac{\partial F}{\partial x_{i,\ell}}(g, X)e^{-2\pi i\, {\rm tr}(\trans M X)} dX. 
\end{align}
Hence by integration by parts,
\begin{align}
(\widehat{E_{i, j} F})(g; M)
= 2\pi i \biggl(\sum_{\ell=1}^d \og_{j,\ell}
m_{i,\ell}\biggr) \hF(g;M).
\end{align}
Repeated use of this formula gives
\begin{align}
(\widehat{E_{i, j}^k F})(g; M)
= (2\pi i)^k \biggl(\sum_{\ell=1}^d \og_{j,\ell}m_{i,\ell}\biggr)^k \hF(g;M).
\end{align}
Recall the definition of the Sobolev norm $S_{\infty,k}$ on ${\rm C}_b^k(\Gamma\bs\Gamma {\rm H})$;
see \eqref{SpkDEF}.
We may assume that the fixed basis for the Lie algebra of ${\rm H}$ which is used in this definition
contains the vectors $\frac d{dt}\sm I_d  & \bn \\ tE_{i,j} & I_n \esm\big|_{t=0}$ for all $i,j$.
Then, using also \eqref{hFgMdef}, we have
\begin{align*}
\Bigl|(\widehat{E_{i, j}^k F})(g; M)\Bigr|
\leq \bigl\|E_{i, j}^k F\bigr\|_\infty
\leq S_{\infty,k}(F).
\end{align*}
Hence we conclude:
\begin{align*}
(2\pi)^k\biggl|\sum_{\ell=1}^d \og_{j,\ell}m_{i,\ell}\biggr|^k \bigl|\hF(g;M)\bigr|
\leq S_{\infty,k}(F).
\end{align*}
Note also that, trivially,
\begin{equation}
\left|\widehat{F}(g; M)\right| \leq S_{\infty,k}(F). 
\end{equation}
Hence
\begin{equation}
\biggl(1+(2\pi)^k\biggl|\sum_{\ell=1}^d \og_{j,\ell}m_{i,\ell}\biggr|^k\biggr) \bigl|\hF(g;M)\bigr|
\leq 2S_{\infty,k}(F).
\end{equation}
The above inequality holds for any $1\leq i\leq n$ and $1\leq j\leq d$.
Note that $\sum_{\ell=1}^d \og_{j,\ell}m_{i,\ell}$ equals the 
entry of the matrix $M\trans g^{-1}$ 
at position $i,j$;
hence the maximum of $\bigl|\sum_{\ell=1}^d \og_{j,\ell}m_{i,\ell}\bigr|$
over all $i,j$ equals $\|M\trans g^{-1}\|_\infty$.
Hence we obtain \eqref{e:Fouriercoeff_bound}
with $\kappa=k$.

Finally, to extend to general $\kappa$, note that after possibly decreasing $k$ we may 
assume that $k-1<\kappa\leq k$.
If $\kappa=k$ then we are done;
hence we may now assume $k-1<\kappa<k$ (thus $\kappa>0$ and $k\geq1$).
The bound proved above holds both for $k$ and for $k':=k-1$;
and combining these we obtain
\begin{align*}
\bigl|\widehat{F}(g; M)\bigr|
\ll_k
\biggl(\frac{S_{\infty,k'}(F)}{1+\|M \trans g^{-1}\|_\infty^{k'}}\biggr)^{k-\kappa}
\biggl(\frac{S_{\infty,k}(F)}{1+\|M \trans g^{-1}\|_\infty^k}\biggr)^{\kappa-k'}.
\end{align*}
This implies \eqref{e:Fouriercoeff_bound},
by \eqref{fracSobnorm} (applied with $k'$ in place of $k$)
and since $(1+x^{k'})^{k-\kappa}(1+x^{k})^{\kappa-k'}\geq 1+x^{\kappa}$
for all $x\geq0$ (by H\"older's inequality).
\end{proof}

\section{Effective equidistribution of Hecke points}\label{ss:Heckepts}

In this section we collect the results about equidistribution of Hecke points which we will need
in the proof of our main theorem.
Our main reference will be  \cite{ClozelOhUllmo2001};
the proofs in that paper make use of spectral theory of automorphic forms
and the strong uniform bounds on matrix exponents of unitary representations
obtained in \cite{Oh2002}.

In this %
section we again assume $1\leq n<d$. 
Recall from \autoref{lem:mtx_relation} that we then have
\begin{align}\label{Dqdefrep}
D_q = q^{-\frac nd}\bpm I_{d-n} & \\ & qI_n\ebpm \in\SL_d(\R).
\end{align}

\begin{lemma}\label{lem:Hecke_decomp}
We have the disjoint coset decomposition 
\begin{equation}\label{e:Hecke_decomp}
{\rm SL}_d(\mathbb{Z})\, D_q\, {\rm SL}_d(\mathbb{Z}) 
= \bigsqcup_{\delta\in \Gamma^0(q) \backslash {\rm SL}_d(\mathbb{Z})} {\rm SL}_d(\mathbb{Z}) D_q \delta. 
\end{equation}
\end{lemma}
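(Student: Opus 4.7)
The plan is to reduce the statement to a direct conjugation computation with $D_q$. The right-coset decomposition that needs to be verified has the form of a standard Hecke coset description, and the combinatorial content is exactly that the stabilizer of the coset $\SL_d(\Z) D_q$ under right multiplication by $\SL_d(\Z)$ equals $\Gamma^0(q)$.

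First I would observe that for any $\delta_1,\delta_2\in\SL_d(\Z)$, one has $\SL_d(\Z) D_q \delta_1 = \SL_d(\Z) D_q \delta_2$ if and only if there exists $\gamma\in\SL_d(\Z)$ with $D_q \delta_1 = \gamma D_q \delta_2$, which is equivalent to $D_q (\delta_1 \delta_2^{-1}) D_q^{-1}\in\SL_d(\Z)$. So the proof reduces to showing: for $\eta\in\SL_d(\Z)$, one has $D_q \eta D_q^{-1}\in\SL_d(\Z)$ if and only if $\eta\in\Gamma^0(q)$. Since the scalar factor $q^{-n/d}$ cancels in the conjugation, I may replace $D_q$ by $\diag(I_{d-n},qI_n)$.

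Next I would carry out the block computation: writing $\eta=\bpm A & B\\ C & D\ebpm$ with $A\in\M_{d-n}(\Z)$, $B\in\M_{(d-n)\times n}(\Z)$, $C\in\M_{n\times(d-n)}(\Z)$, $D\in\M_n(\Z)$,
\begin{equation*}
\bpm I_{d-n} & \\ & qI_n\ebpm \bpm A & B\\ C & D\ebpm \bpm I_{d-n} & \\ & q^{-1}I_n\ebpm
= \bpm A & q^{-1}B \\ qC & D\ebpm.
\end{equation*}
This lies in $\M_d(\Z)$ precisely when $B\equiv\bn \pmod q$, i.e., precisely when $\eta\in\Gamma^0(q)$ (using the definition \eqref{e:Gamma0dnq_def}; the determinant-one condition is automatic). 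This proves that $\SL_d(\Z) D_q \delta_1 = \SL_d(\Z) D_q \delta_2$ if and only if $\delta_1 \delta_2^{-1}\in\Gamma^0(q)$, i.e., if and only if $\delta_1$ and $\delta_2$ represent the same class in $\Gamma^0(q)\bs\SL_d(\Z)$.

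Finally I would conclude: as $\delta$ runs over the set $\mathcal{B}_q$ of representatives for $\Gamma^0(q)\bs\SL_d(\Z)$, the cosets $\SL_d(\Z) D_q \delta$ are pairwise disjoint by the previous step, and their union contains every product $\gamma_1 D_q \gamma_2$ with $\gamma_1,\gamma_2\in\SL_d(\Z)$ (since $\gamma_2$ lies in some $\Gamma^0(q)\delta$, and the left $\SL_d(\Z)$-factor absorbs the conjugate of $\Gamma^0(q)$ by $D_q$), so the union equals $\SL_d(\Z) D_q \SL_d(\Z)$. There is no real obstacle here; the only point requiring care is the direction of the conjugation and the convention that $\Gamma^0(q)$ restricts the \emph{top-right} $(d-n)\times n$ block modulo $q$, which is exactly what the computation produces.
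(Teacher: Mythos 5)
Your proof is correct and follows essentially the same route as the paper: the paper's argument also rests on the single observation $D_q^{-1}\,\SL_d(\Z)\,D_q\cap\SL_d(\Z)=\Gamma^0(q)$ (which your block computation verifies explicitly) and then invokes the standard double-coset decomposition from Shimura's Proposition 3.1, whereas you carry out that last bookkeeping step by hand. Both the conjugation direction and the identification of the top-right block condition are handled correctly.
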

(Recall that $\Gamma^0(q)$ was defined in \eqref{e:Gamma0dnq_def}.)
\begin{proof}
Observe that 
\begin{equation}
D_q^{-1} {\rm SL}_d(\mathbb{Z}) D_q \cap {\rm SL}_d(\mathbb{Z})
= \Gamma^0(q). 
\end{equation}
Hence the group ${\rm SL}_d(\mathbb{Z})$ can be expressed as a disjoint union 
\begin{equation}
{\rm SL}_d(\mathbb{Z}) 
= \bigcup_{\delta\in \Gamma^0(q) \backslash {\rm SL}_d(\mathbb{Z})} 
\big(D_q^{-1} {\rm SL}_d(\mathbb{Z}) D_q \cap {\rm SL}_d(\mathbb{Z})\big) \delta. 
\end{equation}
Following \cite[Proposition~3.1]{Shi94}, 
we get \eqref{e:Hecke_decomp}. 
\end{proof}

We now follow the definition of Hecke operators given in \cite{ClozelOhUllmo2001}. 
For a complex valued function $\Phi$ on ${\rm SL}_d(\mathbb{Z}) \backslash {\rm SL}_d(\mathbb{R})$,
the Hecke operator for $D_q$ is defined as 
\begin{equation}\label{e:Heckeop_def}
(T_{D_q} \Phi) (g)
= \frac{1}{\#\bigl(\Gamma^0(q)\backslash {\rm SL}_d(\mathbb{Z})\bigr)} \sum_{\delta\in \Gamma^0(q)\backslash {\rm SL}_d(\mathbb{Z})} \Phi\left(D_q\delta g\right). 
\end{equation}
This makes sense since $[{\rm SL}_d(\mathbb{Z}): \Gamma^0(q)]<\infty$.

The map $T_{D_q}$ restricts to
a bounded linear operator on ${\rm L}^2(\SL_d(\Z)\bs\SL_d(\R))$.
We will later also encounter the dual operator,
$T_{D_q}^*$, i.e.\ the bounded linear operator on 
${\rm L}^2(\SL_d(\Z)\bs\SL_d(\R))$
which satisfies
\begin{equation}\label{TDqdual}
\big\langle T_{D_q} \Phi_1, \Phi_2\big\rangle
= \int_{{\rm SL}_d(\mathbb{Z})\backslash {\rm SL}_d(\mathbb{R})} 
\bigl[T_{D_q}\Phi_1\bigr](g) \overline{\Phi_2(g)}\, d\mu_0(g)
= \left<\Phi_1, T^*_{D_q} \Phi_2\right>
\end{equation}
for all $\Phi_1, \Phi_2\in {\rm L}^2({\rm SL}_d(\mathbb{Z})\backslash {\rm SL}_d(\mathbb{R}))$, 
where $\mu_0$ is the $\SL_d(\R)$-invariant probability measure on $\SL_d(\Z)\bs\SL_d(\R)$.
By mimicking the proof of
\cite[Proposition~3.39]{Shi94}
one verifies that $T_{D_q}^*$ is in fact the Hecke operator for $D_q^{-1}$.
Using also the fact that the map
$g\mapsto\trans g^{-1}$ is an automorphism of $\SL_d(\R)$ which maps $D_q$ to $D_q^{-1}$,
it follows from \autoref{lem:Hecke_decomp}
that 
\begin{align*}
{\rm SL}_d(\mathbb{Z})\, D_q^{-1}\, {\rm SL}_d(\mathbb{Z}) 
= \bigsqcup_{\delta\in \Gamma^0(q) \backslash {\rm SL}_d(\mathbb{Z})} {\rm SL}_d(\mathbb{Z}) D_q^{-1} \trans\delta^{-1},
\end{align*}
and hence
\begin{align}\label{TDqdualformula}
\bigl(T_{D_q}^*\Phi\bigr)(g)
=\frac{1}{\#\bigl(\Gamma^0(q)\backslash {\rm SL}_d(\mathbb{Z})\bigr)} \sum_{\delta\in \Gamma^0(q)\backslash {\rm SL}_d(\mathbb{Z})} \Phi\left(D_q^{-1}\,\trans\delta^{-1} g\right)
\end{align}
for any $\Phi\in{\rm L}^2(\SL_d(\Z)\bs\SL_d(\R))$.
In fact, we will take \eqref{TDqdualformula}
as a definition of $T_{D_q}^*\Phi$ for \textit{any}
function $\Phi:\SL_d(\Z)\bs\SL_d(\R)\to\C$.

Recall that we denote by $\theta$ the constant towards the Ramanujan conjecture
for Maass wave forms on $\SL_2(\Z)\bs\SL_2(\R)$.

\begin{proposition}\label{HeckeequidistrPROP}
Let $\kappa=\frac{d^2-1}2$, $\ve>0$,
and $k=\lceil\kappa+\ve\rceil$.
Then for every 
$\Phi\in {\rm C}_b^{k}({\rm SL}_d(\mathbb{Z}) \backslash {\rm SL}_d(\mathbb{R}))$, we have
\begin{equation}\label{e:Hecke_eq_pointwise} 
\left|(T_{D_q} \Phi)(I_d) - \int_{{\rm SL}_d(\mathbb{Z}) \backslash {\rm SL}_d(\mathbb{R})} \Phi(g)\, d\mu_0(g)\right| 
\ll_{\ve} S_{2,\kappa+\ve}(\Phi) 
\begin{cases}
q^{-\frac{1}{2}+\theta+\ve} 
& \text{ if } n = 1 \text{ and } d=2 \\[3pt]
q^{-\frac{\min\{n, d-n\}}{2}+ \ve} & \text{ otherwise.}
\end{cases}
\end{equation}
\end{proposition}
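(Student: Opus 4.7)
The plan is to reduce the desired pointwise equidistribution at $g = I_d$ to an ${\rm L}^2$-operator bound on $T_{D_q}$, and then to convert that ${\rm L}^2$ bound to the required pointwise bound by a local Sobolev embedding. Since $T_{D_q}$ preserves constants, I would first replace $\Phi$ by its mean-zero part $\Phi_0 := \Phi - \int\Phi\,d\mu_0$; the integral in \eqref{e:Hecke_eq_pointwise} then vanishes, and matters reduce to bounding $|(T_{D_q}\Phi_0)(I_d)|$.

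The key analytic input, to be imported from \cite{ClozelOhUllmo2001} (built on the uniform matrix-coefficient bounds of \cite{Oh2002}, and on the Kim--Sarnak bound when $d=2$), is the ${\rm L}^2$ operator-norm estimate
$$
\|T_{D_q}\Phi_0\|_{{\rm L}^2} \;\ll\; q^{-\vartheta'+\ve}\,\|\Phi_0\|_{{\rm L}^2}
$$
valid for every mean-zero $\Phi_0\in {\rm L}^2(\SL_d(\Z)\bs\SL_d(\R))$, with $\vartheta' = \tfrac12\min(n,d-n)$ when $d\geq 3$ and $\vartheta' = \tfrac12-\theta$ when $d=2$. This rate reflects the decay of matrix coefficients of the infinite-dimensional unitary representations occurring in the orthogonal complement of constants, evaluated on the specific Cartan element $D_q$ whose logarithm lies along the direction singled out by the parabolic of signature $(d-n,n)$.

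Next, I would apply a local Sobolev embedding on the finite-dimensional manifold $\SL_d(\Z)\bs\SL_d(\R)$ of dimension $d^2-1 = 2\kappa$: for any real $s > \kappa$ and any sufficiently smooth $F$, one has $|F(I_d)| \ll_s S_{2,s}(F)$. For integer $s$ this is the classical local Sobolev embedding into ${\rm L}^\infty$ on a compact neighbourhood of $I_d$; for non-integer $s = \kappa+\ve$, one combines the two adjacent integer-order bounds with the geometric-mean definition \eqref{fracSobnorm} (equivalently, invokes the fractional Sobolev embedding $H^{\kappa+\ve}\hookrightarrow {\rm L}^\infty$). Applying this to $F = T_{D_q}\Phi_0$ and using that $T_{D_q}$ commutes with right-translation---so that $\scrD(T_{D_q}\Phi_0) = T_{D_q}(\scrD\Phi_0)$ for each monomial $\scrD$ in the fixed basis of the Lie algebra, and $\scrD\Phi_0$ remains mean-zero---one controls each $\|\scrD T_{D_q}\Phi_0\|_{{\rm L}^2}$ by $q^{-\vartheta'+\ve}\|\scrD\Phi_0\|_{{\rm L}^2}$ via the operator-norm estimate above. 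Reassembling through the definition of $S_{2,\kappa+\ve}$ gives
$$
|(T_{D_q}\Phi_0)(I_d)| \;\ll\; S_{2,\kappa+\ve}(T_{D_q}\Phi_0) \;\ll\; q^{-\vartheta'+\ve}\,S_{2,\kappa+\ve}(\Phi_0),
$$
which combined with $S_{2,\kappa+\ve}(\Phi_0)\ll S_{2,\kappa+\ve}(\Phi)$ completes the proof.

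The main obstacle is securing the spectral bound with the precise rate $\vartheta'$: one needs the Clozel--Oh--Ullmo framework to apply not only to the standard Hecke operators associated to integer matrices but also to our $T_{D_q}$ built from the coset decomposition of \autoref{lem:Hecke_decomp}, and the representation-theoretic input of \cite{Oh2002} must genuinely deliver the exponent $\tfrac12\min(n,d-n)$ uniformly in $q$---a delicate point, since this rate is sharp only along the specific Cartan direction singled out by $D_q$.
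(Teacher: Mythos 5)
Your proposal is correct and follows essentially the same route as the paper: the ${\rm L}^2$ operator bound on the mean-zero subspace is imported from Clozel--Oh--Ullmo (the paper handles $d=2$ by first expressing $T_{D_q}$ as an explicit linear combination of classical Hecke operators $T_{q/a^2}$ before applying the $\theta$-dependent bound), and the passage to the pointwise estimate at $I_d$ is exactly the local Sobolev embedding combined with the commutation of $T_{D_q}$ with right differentiation, which the paper carries out by citing the proof of Lemma~5 of \cite{StrombergssonVenkatesh2005}.
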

\begin{proof}
It is a known result that for every 
$\Phi\in {\rm L}^2({\rm SL}_d(\mathbb{Z})\backslash {\rm SL}_d(\mathbb{R}))$,
\begin{equation}\label{HECKEL2bound}
\left\|T_{D_q}\Phi - \int_{{\rm SL}_d(\mathbb{Z})\backslash {\rm SL}_d(\mathbb{R})} \Phi(g)\, d\mu_0(g)\right\|_2 \ll_{\ve} \|\Phi\|_2
\begin{cases}
q^{-\frac{1}{2}+\theta+\ve} 
& \text{ if } n = 1 \text{ and } d=2 \\[3pt]
q^{-\frac{\min\{n, d-n\}}{2}+ \ve} & \text{ otherwise.}
\end{cases}
\end{equation}
Indeed, if $d\geq3$ then \eqref{HECKEL2bound} follows by applying
\cite[Theorem 1.1 and p.\ 332 (Remark (3)) and Sec.\ 5.1]{ClozelOhUllmo2001}
for the group $G=\GL_d$,
and using the identification between $\SL_d(\Z)\bs\SL_d(\R)$ with $Z\GL_d(\Z)\bs\GL_d(\R)$,
where $Z$ is the center of $\GL_d(\R)$.
In the case $d=2$ one instead starts by noticing that
(see \cite[Ch.\ 3.1--2; in particular Thm.\ 3.24]{Shi94}; alternatively follow
the computation in 
\cite[p.\ 6599(top)]{LeeMarklof2017}):
\begin{align}\label{HeckeequidistrPROPpf1}
T_{D_q}\Phi=\frac1{q\prod_{p\mid q}(1+p^{-1})}\sum_{a^2\mid q}\mu(a)\sigma_1\Bigl(\frac q{a^2}\Bigr)T_{q/a^2}\Phi,
\end{align}
where the sum runs over all positive integers $a$ satisfying $a^2\mid q$,
and $\sigma_1(m):=\sum_{d\mid m}d$,
and where $T_m$ ($m\in\Z^+$) is the Hecke operator on ${\rm L}^2(\SL_2(\Z)\bs\SL_2(\R))$ defined by
\begin{align*}
(T_m\Phi)(g)=\frac1{\sigma_1(m)}\sum_{a\mid m}\sum_{b=0}^{\frac m{a}-1}\Phi\left(m^{-\frac12}\matr ab0{m/a} g\right)
\qquad (g\in\SL_2(\R)).
\end{align*}
Next, by \cite[Sec.\ 3]{dGaM2003} we have
$\bigl\|T_m\Phi-\int_{\SL_2(\Z)\bs\SL_2(\R)}\Phi\,d\mu_0\bigr\|_2\ll_{\ve} m^{-\frac12+\theta+\ve}\|\Phi\|_2$
for all $m\in\Z^+$.
Using this bound in \eqref{HeckeequidistrPROPpf1},
the triangle inequality,
and the fact that
$\sum_{a^2\mid q}\mu(a)\sigma_1\bigl(\frac q{a^2}\bigr)=q\prod_{p\mid q}(1+p^{-1})$,
we obtain \eqref{HECKEL2bound} for $d=2$.
(The last step was also carried out in
\cite[pp.\ 6599--6600]{LeeMarklof2017}).

Finally, 
after recalling the definition \eqref{fracSobnorm},
the bound in \eqref{e:Hecke_eq_pointwise}  is deduced from
\eqref{HECKEL2bound} as in the proof of 
\cite[Lemma 5]{StrombergssonVenkatesh2005}.
\end{proof}

\section{Matrix Kloosterman sums}
\label{s:MATRIXKLOOSTERMANsec}

In this section we use the following notation:
\begin{align*}
e_q(x):=e^{2\pi ix/q}
\qquad (q\in\Z^+,\: x\in\R).
\end{align*}

For $n,q\in\Z^+$ %
and $A, B \in \M_n(\Z/q\Z)$, 
we define 
\begin{equation}\label{Kndefrep}
    K_n(A, B; q) = \sum_{X \in \GL_n(\Z/q\Z)} e_q({\rm tr}(AX + BX^{-1})).
\end{equation}

\subsection{Prime moduli}
For $p$ a prime number, we denote
the field $\Z/p\Z$ by $\F_p$.
In \cite[Corollary  1.11]{mEaT2021},  
Erd\'elyi and T\'oth 
have recently proved that 
for any prime number $p$ and
any $A,B\in \M_n(\F_p)$, not both $\vecnull$, 
\begin{equation}\label{ETres1}
\bigl|K_n(A, B; p)\bigr| \leq 2p^{n^2 - n+1}.
\end{equation}
In fact, the main result of that paper \cite{mEaT2021} 
is that if both $A$ and $B$ belong to $\GL_n(\F_p)$,
then 
the following much sharper bound holds:
\begin{equation}\label{ETres2}
\bigl|K_n(A, B; p)\bigr| \ll p^{(3n^2-\delta_n)/4},
\end{equation}
where $\delta_n=0$ if $n$ is even and $\delta_n=1$ if $n$ is odd
\cite[Theorem 1.8]{mEaT2021}.

\subsection{General moduli}\label{genmoduliSEC}

This case is easily reduced to the case of prime power moduli,
using the standard multiplicativity relation:
\begin{lemma}\label{multiiplicativityLEM}
Let $q=\prod_{j=1}^rq_j$ where $q_1,\ldots,q_r\in\Z^+$ are pairwise relatively prime,
and for each $j$, 
let $c_j\in(\Z/q_j\Z)^\times$ be a multiplicative inverse of
$\prod_{i\neq j}q_i$ modulo $q_j$.
Then for any 
$A,B\in\M_n(\Z/q\Z)$,
\begin{align}\label{multiiplicativityLEMres}
K_n(A,B;q)=\prod_{j=1}^r K_n(c_jA,c_jB;q_j)
\end{align}
\end{lemma}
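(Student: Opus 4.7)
The plan is a direct application of the Chinese Remainder Theorem, lifted to the level of matrix rings. Since the moduli $q_1,\ldots,q_r$ are pairwise coprime with product $q$, the ring isomorphism $\Z/q\Z\xrightarrow{\sim}\prod_{j=1}^r\Z/q_j\Z$ extends entrywise to a ring isomorphism $\M_n(\Z/q\Z)\xrightarrow{\sim}\prod_{j=1}^r\M_n(\Z/q_j\Z)$. A matrix in $\M_n(\Z/q\Z)$ is invertible precisely when each of its reductions modulo $q_j$ is invertible, so the isomorphism restricts to a bijection $\GL_n(\Z/q\Z)\;\longleftrightarrow\;\prod_{j=1}^r\GL_n(\Z/q_j\Z)$. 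Writing $A_j,B_j,X_j$ for the reductions modulo $q_j$, the operations of addition, matrix multiplication, matrix inversion and trace all commute with reduction, so
$$\tr(AX+BX^{-1})\bmod q_j \;=\; \tr(A_jX_j+B_jX_j^{-1}).$$

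Next I would translate the additive character $e_q$ on $\Z/q\Z$ into a product of characters on the $\Z/q_j\Z$. The explicit inverse to the CRT isomorphism sends $(t_1,\ldots,t_r)\in\prod_j\Z/q_j\Z$ to $t\equiv\sum_{j=1}^r t_j\,c_j\,(q/q_j)\pmod q$, by the defining property $c_j\cdot(q/q_j)\equiv 1\pmod{q_j}$. Dividing by $q$ modulo $1$ yields $t/q\equiv\sum_j c_jt_j/q_j\pmod 1$, hence
$$e_q(t)\;=\;\prod_{j=1}^r e_{q_j}(c_jt_j)\qquad\text{for all }t\in\Z/q\Z.$$

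Specialising this with $t=\tr(AX+BX^{-1})$ and using $t\bmod q_j=\tr(A_jX_j+B_jX_j^{-1})$ together with $c_j\tr(\cdot)=\tr(c_j\,\cdot)$, I obtain
$$e_q\bigl(\tr(AX+BX^{-1})\bigr)\;=\;\prod_{j=1}^r e_{q_j}\bigl(\tr(c_jA_jX_j+c_jB_jX_j^{-1})\bigr).$$
Summing this identity over $X\in\GL_n(\Z/q\Z)$ and applying the bijection with $\prod_j\GL_n(\Z/q_j\Z)$, the $r$-fold product of sums separates and reproduces exactly the right-hand side of \eqref{multiiplicativityLEMres}. There is no genuine obstacle here; the only thing to verify carefully is the CRT-inversion identity for the character, and the compatibility of reduction with trace, inverse, and product on matrices — both of which are routine. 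The $c_j$ factors in the statement are the unavoidable ``twists'' coming from the explicit CRT reconstruction formula, exactly as in the classical scalar case $n=1$.
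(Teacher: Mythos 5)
Your proof is correct and follows essentially the same route as the paper's: both rest on the CRT reconstruction identity $t\equiv\sum_j (q/q_j)c_j t\pmod q$ to factor the additive character as $e_q(t)=\prod_j e_{q_j}(c_j t)$, and on the bijection between $\GL_n(\Z/q\Z)$ and $\prod_j\GL_n(\Z/q_j\Z)$ to separate the sum. No substantive difference.
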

\begin{proof}
For any integer $a$ we have $a\equiv\sum_{j=1}^r (q/q_j)c_ja\mod q$, and so
$e_q(a)=\prod_{j=1}^r e_{q_j}(c_ja)$.
In particular, for any $X\in\GL_n(\Z/q\Z)$,
\begin{align}\label{multiiplicativityLEMpf1}
e_q\bigl({\rm tr}(AX+BX^{-1})\bigr)
=\prod_{j=1}^r \, e_{q_j}\!\bigl({\rm tr}(c_jAX+c_jBX^{-1})\bigr).
\end{align}
On the right-hand side, the
$j$th factor depends only on $X\mod q_j$,
and when $X$ runs through $\GL_n(\Z/q\Z)$,
the $r$-tuple $\langle X\mod q_j\rangle_{j=1,\ldots r}$
runs through the Cartesian product $\prod_{j=1}^r\GL_n(\Z/q_j\Z)$.
Hence when we sum \eqref{multiiplicativityLEMpf1}
over $X\in\GL_n(\Z/q\Z)$,
we obtain \eqref{multiiplicativityLEMres}.
\end{proof}

\begin{lemma}\label{KvanishingLEM2}
Let $q\in\Z^+$ and $A,B\in\M_n(\Z/q\Z)$,
and let $\ell$ be a divisor of $\frac{q}{\prod_{p\mid q} p}$.
Assume also that $\ell\mid B$.
Then
\begin{align}\label{KvanishingLEM2res}
K_n(A,B;q)=\begin{cases}
0&\text{if }\: \ell\nmid A
\\
\ell^{n^2}\, K_n(\ell^{-1}A,\ell^{-1}B;\ell^{-1}q)
&\text{if }\: \ell\mid A.
\end{cases}
\end{align}
\end{lemma}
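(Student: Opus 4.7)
The plan is to exploit the additive translation $X \mapsto X + (q/\ell)T$ with $T$ running over $\M_n(\Z/\ell\Z)$. The hypothesis that $\ell$ divides $q/\prod_{p\mid q}p$ is exactly what makes this work: it guarantees that every prime $p \mid q$ also divides $q/\ell$, so that $(q/\ell)T \equiv \bn \pmod{\operatorname{rad}(q)}$ for every $T$. Consequently $\det(X + (q/\ell)T) \equiv \det(X) \pmod{p}$ for each $p\mid q$, and the translation is a well-defined bijection of $\GL_n(\Z/q\Z)$ onto itself.

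The main computation is to show that the summand in \eqref{Kndefrep} transforms in a simple way under this shift. For the linear part, ${\rm tr}(A(X + (q/\ell)T)) = {\rm tr}(AX) + (q/\ell){\rm tr}(AT)$, which contributes the factor $e_q((q/\ell){\rm tr}(AT)) = e_\ell({\rm tr}(AT))$. For the inverse part, one expands
\begin{align*}
B(X + (q/\ell)T)^{-1} = B X^{-1}\sum_{k\geq 0}\bigl(-(q/\ell) X^{-1}T\bigr)^k
\end{align*}
(the geometric series converges in $\M_n(\Z/q\Z)$ since $I + (q/\ell)X^{-1}T$ is invertible mod $\operatorname{rad}(q)$, hence mod $q$). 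Writing $B = \ell B_0$, each term with $k\geq 1$ carries the scalar $\ell (q/\ell)^k = q(q/\ell)^{k-1}$, which is a multiple of $q$. Hence the $B$-contribution to the summand is unchanged modulo $q$, and altogether
\begin{align*}
e_q\bigl({\rm tr}(A(X + (q/\ell)T) + B(X + (q/\ell)T)^{-1})\bigr) = e_\ell({\rm tr}(AT))\cdot e_q\bigl({\rm tr}(AX + BX^{-1})\bigr).
\end{align*}

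Since the shift is a bijection of $\GL_n(\Z/q\Z)$, summing over $X$ yields $K_n(A,B;q) = e_\ell({\rm tr}(AT))\, K_n(A,B;q)$ for every $T\in\M_n(\Z/\ell\Z)$. If $\ell\nmid A$, there exists $T$ with ${\rm tr}(AT)\not\equiv 0 \pmod\ell$, forcing $K_n(A,B;q) = 0$. If instead $\ell\mid A$, write $A = \ell A_0$, $B = \ell B_0$ and observe that $e_q(\ell\cdot{\rm tr}(A_0 X + B_0 X^{-1})) = e_{q/\ell}({\rm tr}(A_0 X + B_0 X^{-1}))$ depends only on the image of $X$ in $\M_n(\Z/(q/\ell)\Z)$. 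The reduction $\GL_n(\Z/q\Z)\to\GL_n(\Z/(q/\ell)\Z)$ is surjective with fibers of cardinality $\ell^{n^2}$ (by Hensel-type lifting together with the observation that every prime of $q$ divides $q/\ell$); so partitioning the $X$-sum along these fibers yields the factor $\ell^{n^2}$ and converts the sum into $K_n(\ell^{-1}A,\ell^{-1}B;\ell^{-1}q)$.

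The one step that requires genuine care is the inverse expansion in the middle paragraph: one must verify that all $k\geq 1$ terms really do vanish modulo $q$ (which uses both $\ell\mid B$ and, implicitly, $\ell\mid q/\operatorname{rad}(q)$ via the invertibility of $I + (q/\ell)X^{-1}T$). Everything else is essentially bookkeeping.
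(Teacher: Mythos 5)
Your proof is correct and follows essentially the same route as the paper: both decompose $\GL_n(\Z/q\Z)$ into additive cosets $X+(q/\ell)T$, use that $\ell\mid B$ kills the correction to ${\rm tr}(BX^{-1})$ modulo $q$, extract the character sum $\sum_T e_\ell({\rm tr}(AT))$ (which is $\ell^{n^2}$ or $0$), and identify the remaining sum with $K_n(\ell^{-1}A,\ell^{-1}B;\ell^{-1}q)$ via the fiber count for $\GL_n(\Z/q\Z)\to\GL_n(\Z/(q/\ell)\Z)$. The only blemish is cosmetic: in your Neumann-series expansion the trailing $X^{-1}$ is misplaced (it should be $B\sum_{k\ge0}(-(q/\ell)X^{-1}T)^k\,X^{-1}$), but since only divisibility of the $k\geq1$ terms by $q$ is used, nothing is affected.
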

\begin{proof}
Because of the assumption on $\ell$,
we can fix a subset $R$ of $\GL_n(\Z/q\Z)$ containing exactly one representative for
each congruence class in
$\GL_n(\Z/\frac q{\ell}\Z)$,
and then the map
$\langle Y,Z\rangle\mapsto\frac q{\ell}Y+Z$ is a bijection from
$\M_n(\Z/\ell\Z)\times R$ onto $\GL_n(\Z/q\Z)$.
Using this parametrization in \eqref{Kndefrep},
writing $B=\ell B'$ with $B'\in\M_n(\Z/\frac q{\ell}\Z)$
and noticing that $(\frac q{\ell}Y+Z)^{-1}\equiv Z^{-1}\mod\frac q{\ell}$,
we get
\begin{align*}
K_n(A,B;q)=\sum_{Y\in\M_n(\Z/\ell\Z)}e_{\ell}\bigl({\rm tr}(AY)\bigr)
\sum_{Z\in R}e_q\bigl({\rm tr}(AZ)\bigr)
e_{q/\ell}\bigl({\rm tr}(B'Z^{-1})\bigr).
\end{align*}
Here the sum over $Y$ equals $\ell^{n^2}$ if $\ell\mid A$,
and otherwise vanishes. 
Hence we obtain \eqref{KvanishingLEM2res}.
\end{proof}

\subsection{Prime power moduli}\label{primepowerSEC}
In the case of higher prime power moduli, we will prove a 
bound on $K_n(A,B;q)$ by direct and elementary computations;
see \autoref{KboundgenqPROP} below for the final result. 
We remark that bounds of a similar nature,
but more precise and in certain respects stronger,
have independently been obtained in the recent paper
\cite{ETZ} by Erd\'elyi, T\'oth and Z\'abr\'adi.
However we choose to include the proofs in this section in order to 
make our paper more self-contained
and because we use a shortcut that leads to an upper bound which is sufficient for our needs.
We further emphasize that, using the bounds from \cite{ETZ} instead, 
would not lead to an improvement of the exponents in our main result, \autoref{MAINTHM}.

For $q\in\Z^+$ 
and
$A, B \in \M_n(\Z/q\Z)$, 
we define 
\begin{equation}\label{scrCdef}
\mathcal{C}_q(A, B) = \{Y \in \GL_n(\Z/q \Z) \, : \, AY \equiv Y^{-1}B \mod{q}\}.
\end{equation}
For any prime $p$ and $C, D \in \M_n(\F_p)$, we also introduce the following
matrix Gauss sum:
\begin{equation}\label{GpCDdef}
    G_p(C, D) = \sum_{Z \in \M_n(\F_p)} e_p({\rm tr}(CZ^2 + DZ)).
\end{equation}

\begin{lemma} \label{thm:Taylor_csq}
Let $q=p^\beta$ where $p$ is a prime and $\beta\geq2$,
and set $\alpha=\lfloor\beta/2\rfloor$.
Let $A,B\in\M_n(\Z/q\Z)$, and assume $A,B\not\equiv 0\mod p$.
If $\beta$ is even, then 
\begin{equation}\label{thm:Taylor_csqres1}
    |K_n(A, B; q)| \leq p^{\alpha n^2}  \# \mathcal{C}_{p^\alpha}(A, B).
\end{equation}
If $\beta$ is odd, then 
\begin{equation}\label{thm:Taylor_csqres2}
    |K_n(A, B; q)| \leq p^{\alpha n^2}  \# \mathcal{C}_{p^\alpha}(A, B) 
\cdot\max\bigl\{|G_p(C,D)|\col C,D\in\M_n(\F_p),\: C\neq 0\bigr\}.
\end{equation}
\end{lemma}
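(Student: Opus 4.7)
The plan is to use the $p$-adic stationary phase method: Taylor expand the phase $\tr(AX + BX^{-1})$ around an ``approximate critical point'' $Y$ by writing $X = Y(I_n + p^\alpha W)$, where $Y$ runs over representatives in $\GL_n(\Z/p^\beta\Z)$ of elements of $\GL_n(\Z/p^\alpha\Z)$ (such lifts exist by Hensel's lemma) and $W$ runs over $\M_n(\Z/p^{\beta-\alpha}\Z)$. A dimension count confirms that this gives a bijection onto $\GL_n(\Z/p^\beta\Z)$. The geometric series $(I_n + p^\alpha W)^{-1} = \sum_{k \geq 0}(-p^\alpha W)^k$ terminates modulo $p^\beta$ once $k\alpha \geq \beta$, which truncates the expansion at a different order depending on the parity of $\beta$.

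For $\beta = 2\alpha$ only $k=0,1$ survive, giving
\begin{equation*}
\tr(AX + BX^{-1}) \equiv \tr(AY + BY^{-1}) + p^\alpha \tr\bigl((AY - Y^{-1}B)W\bigr) \pmod{p^\beta}.
\end{equation*}
The inner sum over $W \in \M_n(\Z/p^\alpha\Z)$ is then a linear additive character sum of the form $\sum_W e_{p^\alpha}(\tr(EW))$ with $E = AY - Y^{-1}B$, which by orthogonality equals $p^{\alpha n^2}$ when $E \equiv 0 \pmod{p^\alpha}$ and vanishes otherwise. The condition $AY \equiv Y^{-1}B \pmod{p^\alpha}$ means precisely that the residue class of $Y$ lies in $\scrC_{p^\alpha}(A,B)$; summing over these $Y$ and applying the triangle inequality yields \eqref{thm:Taylor_csqres1}.

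For $\beta = 2\alpha + 1$ the term $k=2$ also survives, producing an extra quadratic contribution $p^{2\alpha}\tr(Y^{-1}B W^2)$, which under $e_q$ becomes $e_p(\tr(Y^{-1}B W^2))$, while the linear part becomes $e_{p^{\alpha+1}}(\tr(EW))$ with $W \in \M_n(\Z/p^{\alpha+1}\Z)$. I would then carry out a two-step decomposition: first write $W = W_1 + p^\alpha W_2$ with $W_1 \in \M_n(\Z/p^\alpha\Z)$ and $W_2 \in \M_n(\F_p)$; since $W^2 \equiv W_1^2 \pmod{p}$, the sum factors, and the $W_2$-sum is a linear orthogonality sum forcing $E \equiv 0 \pmod{p}$, after which one writes $E = pE'$. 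Then decompose $W_1 = U_0 + pU_1$ with $U_0 \in \M_n(\F_p)$ and $U_1 \in \M_n(\Z/p^{\alpha-1}\Z)$; the $U_1$-sum forces $E' \equiv 0 \pmod{p^{\alpha-1}}$, i.e.\ $AY \equiv Y^{-1}B \pmod{p^\alpha}$, so $Y$ contributes exactly through $\scrC_{p^\alpha}(A,B)$. The residual sum over $U_0 \in \M_n(\F_p)$ is precisely a matrix Gauss sum $G_p(Y^{-1}B, E'')$ where $E = p^\alpha E''$. Since $B \not\equiv 0 \pmod p$ and $Y$ is invertible, $Y^{-1}B \not\equiv 0 \pmod p$, so this falls under the supremum on the right-hand side of \eqref{thm:Taylor_csqres2}. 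The combined orthogonality factors give $p^{n^2} \cdot p^{(\alpha-1)n^2} = p^{\alpha n^2}$, matching the exponent claimed.

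The main obstacle is bookkeeping at multiple $p$-adic scales: each stage lives modulo a different power of $p$, and one must carefully verify that the denominators in the characters $e_{p^j}$ match the granularity of the decomposition of $W$ and that the cross and high-order terms one drops (for example $p^{2\alpha}$-terms in $W^2$ when expanding $W = W_1 + p^\alpha W_2$, and terms $p^{k\alpha}W^k$ with $k\alpha \geq \beta$) really do vanish modulo $p^\beta$. No deep input beyond Hensel lifting and the orthogonality of additive characters is needed, but the modular arithmetic must be tracked precisely at every step so that the exponents line up to the claimed $p^{\alpha n^2}$ and so that the residual Gauss sum has the non-vanishing matrix $C = Y^{-1}B$ required by the max in \eqref{thm:Taylor_csqres2}.
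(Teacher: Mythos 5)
Your proposal is correct and follows essentially the same route as the paper: parametrize $\GL_n(\Z/p^\beta\Z)$ by lifts $Y$ of $\GL_n(\Z/p^\alpha\Z)$ times perturbations $I+p^\alpha W$, truncate the Neumann series for $(I+p^\alpha W)^{-1}$ according to the parity of $\beta$, use orthogonality in the linear part to force $AY\equiv Y^{-1}B\bmod p^\alpha$, and in the odd case identify the residual quadratic sum as $G_p(Y^{-1}B,\,p^{-\alpha}(AY-Y^{-1}B))$ with $Y^{-1}B\not\equiv 0\bmod p$. The only difference is cosmetic bookkeeping: your odd-case splitting $W=U_0+pU_1+p^\alpha W_2$ versus the paper's direct parametrization $I+p^\alpha Z_1+p^{\alpha+1}Z_2$, which yield the same factor $p^{\alpha n^2}$.
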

\begin{proof}
Fix a subset $R$ of $\GL_n(\Z/q\Z)$ containing exactly one representative
for each congruence class in $\GL_n(\Z/p^\alpha\Z)$.
Let us first assume that $\beta$ is even; thus $\beta=2\alpha$.
Then the map $\langle Y,Z\rangle\mapsto Y(I+p^\alpha Z)$
is a bijection from $R\times\M_n(\Z/p^\alpha\Z)$ onto $\GL_n(\Z/q\Z)$.
Using this parametrization in \eqref{Kndefrep},
and the fact that $(I+p^\alpha Z)^{-1}\equiv I-p^{\alpha}Z\mod q$,
we obtain
\begin{align*}
K_n(A,B;q)=\sum_{Y\in R}e_q({\rm tr}(AY+BY^{-1}))\sum_{Z\in\M_n(\Z/p^\alpha\Z)}e_{p^\alpha}\bigl({\rm tr}\bigl((AY-Y^{-1}B)Z\bigr)\bigr).
\end{align*}
Here the inner sum vanishes unless $AY-Y^{-1}B\equiv0\mod p^\alpha$.
Hence we obtain the bound in \eqref{thm:Taylor_csqres1}.

Next assume that $\beta$ is odd, i.e.\ $\beta=2\alpha+1$.
Then we also fix a subset $R'$ of $\M_n(\Z/p^{\alpha+1}\Z)$
containing exactly one representative for each congruence class in $\M_n(\F_p)$.
Then the map $\langle Y,Z_1,Z_2\rangle\mapsto Y(I+p^\alpha Z_1+p^{\alpha+1}Z_2)$
is a bijection from $R\times R'\times\M_n(\Z/p^\alpha\Z)$ onto $\GL_n(\Z/q\Z)$.
Using this in \eqref{Kndefrep},
together with the fact that
$(I+p^\alpha Z_1+p^{\alpha+1}Z_2)^{-1}
\equiv I-p^\alpha Z_1-p^{\alpha+1}Z_2+p^{2\alpha}Z_1^2\mod q$,
we obtain
\begin{align*}
K_n(A,B;q)=\sum_{Y\in R}e_q({\rm tr}(AY+Y^{-1}B))
\sum_{Z_1\in R'}e_{p^{\alpha+1}}\bigl({\rm tr}\bigl((AY-Y^{-1}B)Z_1+p^\alpha Y^{-1}BZ_1^2\bigr)\bigr)
\hspace{30pt}
\\
\times\sum_{Z_2\in\M_n(\Z/p^\alpha\Z)}e_{p^\alpha}\bigl({\rm tr}\bigl((AY-Y^{-1}B)Z_2\bigr)\bigr).
\end{align*}
Here the sum over $Z_2$ vanishes unless $AY-Y^{-1}B\equiv0\mod p^\alpha$;
hence we obtain
\begin{align*}
K_n(A,B;q)=p^{\alpha n^2}\sum_{\substack{Y\in R\\ AY-Y^{-1}B\equiv0 \mod p^\alpha}}e_q({\rm tr}(AY+Y^{-1}B))
\cdot G_p\biggl(Y^{-1}B,\frac{AY-Y^{-1}B}{p^\alpha}\biggr),
\end{align*}
and this leads to the bound in \eqref{thm:Taylor_csqres2}.
\end{proof}

In order to make the bound in \autoref{thm:Taylor_csq}
useful, we need to bound $\#\scrC_{p^\alpha}(A,B)$.
We will first treat the case $\alpha=1$,
and for this we will need the following lemma.\footnote{We learnt about this fact 
from MathOverflow, question 41784 (``Roots of permutations'') \cite{MO}.}
\begin{lemma}\label{mathoverflowLEMMA}
Let $G$ be a finite group with the property that 
every irreducible linear representation of $G$ over $\C$
is either realizable over $\R$ or has non-real character.
Let $f:G\to\Z_{\geq0}$ be the function that counts the number of square roots
of each element in $G$,
viz., $f(g):=\#\{x\in G\col x^2=g\}$.
Then $f(g)\leq f(e)$ for all $g\in G$.
\end{lemma}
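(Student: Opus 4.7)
The natural tool here is the Frobenius--Schur indicator. Recall that for each irreducible character $\chi$ of $G$ one sets $\nu(\chi) = |G|^{-1}\sum_{g \in G}\chi(g^2)$, and the classical Frobenius--Schur theorem asserts that $\nu(\chi) \in \{-1, 0, 1\}$: the value is $+1$ when the associated representation is realizable over $\R$, it is $-1$ when $\chi$ is real-valued but the representation is not realizable over $\R$, and it is $0$ precisely when $\chi$ is not real-valued. The hypothesis of the lemma rules out the middle case, so under the given assumption $\nu(\chi) \in \{0, 1\}$ for every irreducible character $\chi$ of $G$.

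The second ingredient would be the well-known identity
\begin{equation*}
f(g) = \sum_{\chi} \nu(\chi)\,\chi(g),
\end{equation*}
where the sum ranges over all irreducible characters of $G$. I would derive it directly: since $f$ is a class function, $f = \sum_\chi c_\chi \chi$ with $c_\chi = \langle f, \chi\rangle$, and computing the inner product gives
\begin{equation*}
|G|\, c_\chi = \sum_{g \in G} f(g)\,\overline{\chi(g)} = \sum_{x \in G} \overline{\chi(x^2)} = |G|\,\overline{\nu(\chi)},
\end{equation*}
where the middle equality uses $f(g) = \#\{x \in G : x^2 = g\}$ to re-index the sum. Since $\nu(\chi)$ is real, this yields $c_\chi = \nu(\chi)$.

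To conclude, combine the two facts. Under the hypothesis the expansion reduces to $f(g) = \sum_{\chi\,:\,\nu(\chi)=1}\chi(g)$, and the elementary character bound $|\chi(g)| \leq \chi(e)$ (valid for any finite-dimensional representation, since $\chi(g)$ is a sum of $\chi(e)$ roots of unity) gives
\begin{equation*}
f(g) \leq \sum_{\chi\,:\,\nu(\chi)=1} \chi(e) = f(e),
\end{equation*}
as desired. I do not anticipate a substantial obstacle: both the Frobenius--Schur theorem and the class-function expansion are standard, and the only real subtlety is the freedom to discard the $\nu(\chi) = -1$ terms from the sum, which is exactly what the hypothesis on $G$ supplies. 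Without that hypothesis, those terms could contribute negatively to $f(e)$ while leaving $f(g)$ essentially unchanged, and the inequality could fail.
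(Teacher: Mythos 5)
Your proposal is correct and follows essentially the same route as the paper: expand the square-root counting function $f$ in irreducible characters, identify the coefficients as Frobenius--Schur indicators, use the hypothesis to rule out the value $-1$, and conclude with $|\chi(g)|\leq\chi(e)$. The paper phrases the last step by taking real parts of $\chi(g)$ and cites Serre for the indicator trichotomy, but the argument is the same.
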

\begin{proof}
Clearly $f$ is a class function (i.e., invariant under conjugation),
and hence 
$f=\sum_\chi\langle f,\chi\rangle \chi$
where the sum is taken over all irreducible characters of $G$
\cite[Theorem 6]{jS77}.
Here
\begin{align*}
\langle f,\chi\rangle
=\frac1{\#G}\sum_{g\in G} f(g)\chi(g^{-1})
=\frac1{\#G}\sum_{g\in G}\sum_{x\in G} \delta_{x^2=g}\, \chi(g^{-1})
=\frac1{\#G}\sum_{x\in G}\chi(x^{-2})
=\frac1{\#G}\sum_{x\in G}\chi(x^{2}).
\end{align*}
This formula, together with the assumption of the lemma and
\cite[Prop.\ 39]{jS77},
implies that $\langle f,\chi\rangle\in\{0,1\}$ for all $\chi$.
Hence $f=\sum_{\chi\in S}\chi=\sum_{\chi\in S}\tre\chi$,
where $S$ is the set of those $\chi$ for which $\langle f,\chi\rangle=1$.
Hence for any $g\in G$,
$f(g)=\sum_{\chi\in S}\tre\chi(g)
\leq \sum_{\chi\in S}\chi(e)
=f(e)$.
\end{proof}

\begin{proposition}\label{CpABboundPROP}
For every prime $p$ and every $A,B\in\M_n(\F_p)$, not both zero, we have
\begin{align}\label{CpABboundPROPres}
\#\scrC_p(A,B)\ll p^{(n-1)^2+1}.
\end{align}
More precisely,
if $A$ is in $\mathrm{GL}_n(\F_p)$, then
\begin{equation}\label{CpABboundPROPres2}
\# \mathcal{C}_p(A, B) \ll p^{\frac{1}{2}(n^2-\delta_n)}, 
\end{equation}
where $\delta_n=\frac{1-(-1)^{n}}{2}$, 
while if $r=\rank A$ satisfies $1\leq r\leq n-1$ then
\begin{equation}\label{CpABboundPROPres3}
\# \mathcal{C}_p(A, B) \ll p^{n^2-2r(n-r)}.
\end{equation} 
The implied constants in all three bounds are absolute.
\end{proposition}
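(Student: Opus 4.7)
The plan is to prove \eqref{CpABboundPROPres2} first, and then to derive \eqref{CpABboundPROPres3} by induction on $n$, using the invertible case as the base of a recursion at each step; \eqref{CpABboundPROPres} will follow by extracting the worst subcase. Throughout, observe that the condition $AY\equiv Y^{-1}B$ defining $\scrC_p(A,B)$ is equivalent (after left-multiplying by $Y$) to $YAY=B$.

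\emph{Invertible case.} When $A\in\GL_n(\F_p)$, the substitution $Z=YA$ is a bijection of $\GL_n(\F_p)$ converting $YAY=B$ into $Z^2=BA$, so $\#\scrC_p(A,B)=\#\{Z\in\GL_n(\F_p):Z^2=BA\}$. I apply \autoref{mathoverflowLEMMA} to $G=\GL_n(\F_p)$: its hypothesis holds because it is a classical fact that every irreducible complex representation of $\GL_n$ over a finite field has Frobenius--Schur indicator $\geq 0$ (indeed $=+1$). This bounds the count by the number of involutions in $\GL_n(\F_p)$. For $p$ odd, involutions are diagonalizable with eigenvalues $\pm 1$, hence conjugate to some $\diag(I_k,-I_{n-k})$, whose conjugacy class has size $\asymp p^{2k(n-k)}$; summing over $0\leq k\leq n$ yields $\ll p^{(n^2-\delta_n)/2}$. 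A brief direct count handles $p=2$.

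\emph{Non-invertible case.} For $1\leq r\leq n-1$, use that $\#\scrC_p(A,B)$ is invariant under $(A,B)\mapsto(PAQ,Q^{-1}BP^{-1})$ via $Y\leftrightarrow Q^{-1}YP^{-1}$, so by choosing $P,Q\in\GL_n(\F_p)$ with $PAQ=J:=\diag(I_r,0)$ I may assume $A=J$. Assume $\rank B=r$ (otherwise $\scrC_p(J,B)=\emptyset$) and fix a rank-$r$ factorization $B=U_BV_B^T$ with $U_B\in\M_{n\times r}(\F_p)$ and $V_B^T\in\M_{r\times n}(\F_p)$; write $U_B=\bigl(\begin{smallmatrix}U_1\\U_2\end{smallmatrix}\bigr)$ and $V_B^T=(V_1\ V_2)$ with $U_1,V_1\in\M_r(\F_p)$. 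Combining $(YJ)(JY)=YJY=U_BV_B^T$ with the uniqueness of rank-$r$ factorizations up to $\GL_r$, each $Y\in\scrC_p(J,B)$ corresponds to a unique $X\in\GL_r(\F_p)$ via
\begin{equation*}
Y_{11}=U_1X=X^{-1}V_1,\qquad Y_{12}=X^{-1}V_2,\qquad Y_{21}=U_2X,
\end{equation*}
with $Y_{22}\in\M_{n-r}(\F_p)$ free subject only to $Y\in\GL_n$; the compatibility of the two formulas for $Y_{11}$ yields the consistency condition $XU_1X=V_1$, and conversely every such $X$ together with any admissible $Y_{22}$ produces a valid $Y$. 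Crudely bounding $\#\{Y_{22}\}\leq p^{(n-r)^2}$ gives
\begin{equation*}
\#\scrC_p(J,B)\leq\#\{X\in\GL_r(\F_p):XU_1X=V_1\}\cdot p^{(n-r)^2}.
\end{equation*}

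The first factor is itself an instance of $\#\scrC_p$, but in dimension $r$ with $A$-matrix $U_1$. Set $r_1=\rank U_1$; since $V_B^T$ has full row rank $r$, one has $r_1=\rank(U_1V_B^T)=\rank(\text{top }r\text{ rows of }B)$, so $r_1$ is determined by $B$. If $r_1=r$, then $U_1\in\GL_r(\F_p)$ and \eqref{CpABboundPROPres2} in dimension $r$ gives $\ll p^{(r^2-\delta_r)/2}$. If $r_1=0$, then necessarily $U_1=V_1=0$ and every $X\in\GL_r(\F_p)$ qualifies, giving $|\GL_r(\F_p)|\asymp p^{r^2}$. Otherwise $1\leq r_1\leq r-1$ and, since $r<n$, the inductive hypothesis applied to $(U_1,V_1)$ gives $\ll p^{r^2-2r_1(r-r_1)}$. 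In every subcase the product is $\ll p^{r^2+(n-r)^2}=p^{n^2-2r(n-r)}$, closing the induction. The main obstacle is securing this clean recursive structure: the parametrization must reduce the original problem to an instance of exactly the same form on a smaller block of size $r$, and the cleanness of the compatibility equation $XU_1X=V_1$ is precisely what makes the induction work. Finally, \eqref{CpABboundPROPres} follows because the exponent $n^2-2r(n-r)$ on $1\leq r\leq n-1$ is maximized at the endpoints $r\in\{1,n-1\}$, yielding $(n-1)^2+1$, which dominates $(n^2-\delta_n)/2$ for every $n\geq 1$.
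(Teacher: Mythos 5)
Your argument is correct, and while your treatment of the invertible case coincides with the paper's (the substitution reducing $YAY=B$ to a square-root count in $\GL_n(\F_p)$, \autoref{mathoverflowLEMMA}, and the involution count), your handling of the non-invertible case $1\leq r\leq n-1$ is a genuinely different route. The paper fixes some $Y_0\in\scrC_p(A,B)$, reduces to $\scrC_p(A_0,A_0)$ with $A_0=AY_0$, observes that every solution restricts to an element of $\GL(V)$ on the $r$-dimensional image $V$ of $A_0$, and bounds the number of extensions off $V$ by $p^{(n-r)^2}$, giving $p^{r^2+(n-r)^2}$ directly with no recursion. You instead normalize $A$ to $\diag(I_r,\bn)$, exploit the uniqueness of rank-$r$ factorizations of $B$ to parametrize solutions by pairs $(X,Y_{22})$ with $X\in\GL_r(\F_p)$ satisfying $XU_1X=V_1$, and recurse on the $r$-dimensional instance $\scrC_p(U_1,V_1)$. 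Both yield the same exponent $n^2-2r(n-r)$; your version is more structural (it exhibits the problem as self-similar and would let one iterate for sharper bounds when $\rank U_1<r$), while the paper's is shorter and avoids induction. Three small points to tighten: (i) your parenthetical that every irreducible character of $\GL_n(\F_p)$ has Frobenius--Schur indicator $+1$ is false (non-real characters have indicator $0$, already for $\GL_1$); the correct and sufficient fact, which the paper cites from Zelevinsky, is that the indicator is never $-1$, i.e.\ every irreducible is realizable over $\R$ or has non-real character. (ii) The $p=2$ involution count is not quite a one-liner: in characteristic $2$ involutions are unipotent with Jordan blocks of size at most $2$, and one must compute centralizers of $\diag(\oJ',\ldots,\oJ',\oJ,\ldots,\oJ)$ to see the total is still $\asymp 2^{(n^2-\delta_n)/2}$, as the paper does. (iii) To deduce \eqref{CpABboundPROPres} you also need the degenerate case $\rank A=0$, $B\neq\bn$ (and its mirror), where $\scrC_p(A,B)=\emptyset$; this is the paper's opening observation that $\rank A\neq\rank B$ forces emptiness.
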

(A slightly more precise bound is given in \cite[Theorem 1.6]{ETZ}.)
\begin{proof}
It suffices to prove \eqref{CpABboundPROPres2} and \eqref{CpABboundPROPres3},
since these imply \eqref{CpABboundPROPres}.
It is immediate from the definition,
\eqref{scrCdef},
that $\rank A\neq\rank B$ implies
$\scrC_p(A,B)=\emptyset$;
hence we may assume that
$r=\rank A=\rank B$.

Let us first assume $r=n$,
i.e.\ $A$ and $B$ both lie in $\GL_n(\F_p)$.
Substituting $Z=AY$ in the definition of $\scrC_p(A,B)$,
it follows that $\#\scrC_p(A,B)$ equals the number of elements
$Z\in \GL_n(\F_p)$ with $Z^2=AB$.
Also the group $\GL_n(\F_p)$
is known to have the property that
all of its linear representations are
either realizable over $\R$ or have non-real character
\cite[Ch.\ III, 12.6]{aZ81}.
Using these facts
in combination with \autoref{mathoverflowLEMMA},
we conclude that
\begin{align}\label{CpABboundPROPpf1}
\#\scrC_p(A,B)
\leq \#\{Z\in \GL_n(\F_p)\col Z^2=I\}.
\end{align}

The cardinality on the right-hand side of \eqref{CpABboundPROPpf1} is easy
to calculate:
if $Z\in\GL_n(\F_p)$ satisfies $Z^2=I$, then all eigenvalues of $Z$
must equal $\pm1$,
and hence
$Z$ is conjugate over $\F_p$ to a matrix $J$ in Jordan canonical form,
say with Jordan blocks $J_1,\ldots,J_k$ (in this order) where $J_i$ is the $n_i\times n_i$ matrix
\begin{align*}
J_i=\begin{pmatrix}
\ve_i &  1    &    &&&
\\
       & \ve_i & 1 & & 0 &
\\
 & & \cdots &&&
\\
 &&& \cdots &&
\\
& 0 &&& \ve_i & 1
\\
&&&&& \ve_i
\end{pmatrix}
\end{align*}
with $\ve_i\in\{1,-1\}$.
Let us first assume $p\neq2$.
Then $J^2=I$ forces $n_i=1$ for all $i$,
and so we conclude that for every matrix $Z$ belonging
to the set on the right-hand side of \eqref{CpABboundPROPpf1},
there is a unique $0\leq a\leq n$ such that $Z$ 
is conjugate over $\F_p$ to the diagonal matrix $D_a$ having
$a$ 1's and $(n-a)$ $-1$'s along the diagonal, in this order.
Hence the right-hand side of \eqref{CpABboundPROPpf1} equals
\begin{align*}
\sum_{a=0}^n\#\{TD_aT^{-1}\col T\in\GL_n(\F_p)\}
=\sum_{a=0}^n\#(\GL_n(\F_p)/C(D_a)),
\end{align*}
where $C(D_a)$ is the centralizer of $D_a$ in $\GL_n(\F_p)$.
But $C(D_a)$ consists of exactly the matrices in $\GL_n(\F_p)$
which are block diagonal with blocks of sizes $a,n-a$,
and so 
\begin{align*}
\#C(D_a)=\#\GL_{a}(\F_p)\#\GL_{n-a}(\F_p)
=\prod_{j=0}^{a-1}(p^a-p^j)\prod_{j=0}^{n-a-1}(p^{n-a}-p^j).
\end{align*}
If $a\in\{0,n\}$ this should of course be understood to say $\#C(D_a)=\#\GL_n(\F_p)=\prod_{j=0}^{n-1}(p^n-p^j)$.
Hence the right-hand side of \eqref{CpABboundPROPpf1} equals
\begin{align*}
\sum_{a=0}^n\frac{\prod_{j=0}^n(p^n-p^j)}{\prod_{j=0}^{a-1}(p^a-p^j)\prod_{j=0}^{n-a-1}(p^{n-a}-p^j)}.
\end{align*}
Noticing that $\prod_{j=0}^{a-1}(p^a-p^j)\asymp p^{a^2}$ uniformly over all primes $p$ and all $a\geq0$,
the above expression is seen to be
\begin{align*}
\asymp\sum_{a=0}^n p^{n^2-a^2-(n-a)^2}
\asymp p^{\frac12(n^2-\delta_n)},
\end{align*}
with
$\delta_n=\frac{1-(-1)^{n}}{2}$
and we have thus proved 
\eqref{CpABboundPROPres2} in the case $r=n$, $p\neq 2$.

Next we assume $r=n$, $p=2$.
Then $J^2=I$ forces $n_i\in\{1,2\}$ for all $i$,
and thus, since $-1=1$ in $\F_2$,
every Jordan block appearing in $J$ must equal
$\oJ:=\bigl(1\bigr)$
or $\oJ':=\matr1101$.
Hence for every matrix $Z$ belonging
to the set in the right-hand side of \eqref{CpABboundPROPpf1},
there is a unique $0\leq a\leq \lfloor n/2\rfloor$ such that $Z$ 
is conjugate over $\F_2$ to the block diagonal matrix $D_a'$ having
$a$ blocks $\oJ'$ and $n-2a$ blocks $\oJ$ along the diagonal, in this order.
It follows that the right-hand side of \eqref{CpABboundPROPpf1} equals
\begin{align}\label{CpABboundPROPpf3}
\sum_{0\leq a\leq\lfloor n/2\rfloor}\#\{TD_a'T^{-1}\col T\in\GL_n(\F_2)\}
=\sum_{0\leq a\leq\lfloor n/2\rfloor}\#(\GL_n(\F_2)/C(D_a')).
\end{align}
Here we claim that
\begin{align}\label{CpABboundPROPpf2}
\#C(D_a')=2^{a(2n-3a)}\#\GL_a(\F_2)\#\GL_{n-2a}(\F_2).
\end{align}
To prove this, note that $X\in\GL_n(\F_2)$ commutes with $D_a'$ if and only if 
$X$ commutes with
$D_a'-I$,
and $D_a'-I$ can be conjugated, by a permutation matrix,
into the matrix 
\begin{equation} 
U_a:=\begin{pmatrix} \vecnull & I_a & \vecnull \\ \vecnull & \vecnull & \vecnull \\ \vecnull & \vecnull & \vecnull \end{pmatrix}, 
\end{equation}
with block sizes $a,a,n-2a$ in this order.
Hence $\#C(D_a')=\#C(U_a)$,
and writing $X=\bigl(X_{ij}\bigr)_{i,j=1,2,3}$ with block sizes $a,a,n-2a$,
we find that $X$ belongs to $C(U_a)$ if and only if
$X_{11}=X_{22}$ and the three matrices $X_{21},X_{23},X_{31}$ vanish.
Furthermore, by considering the determinant,
such a block matrix $X$ 
is invertible
if and only if both $X_{11}=X_{22}$ and $X_{33}$ 
are invertible.
Hence we obtain the formula in
\eqref{CpABboundPROPpf2}.

It follows from \eqref{CpABboundPROPpf3} and \eqref{CpABboundPROPpf2}
that the right-hand side of 
\eqref{CpABboundPROPpf1} is
\begin{align*}
\asymp\sum_{0\leq a\leq\lfloor n/2\rfloor} 2^{n^2-a(2n-3a)-a^2-(n-2a)^2}
=\sum_{0\leq a\leq\lfloor n/2\rfloor} 2^{\frac12n^2-2(a-\frac12n)^2}
\asymp 2^{\frac12(n^2-\delta_n)}.
\end{align*}
Hence 
\eqref{CpABboundPROPres2} also holds
in the case $r=n$, $p=2$.

\vspace{5pt}

It remains to consider the case when $r=\rank A=\rank B$ satisfies
$1\leq r\leq n-1$;
we then wish to prove the bound \eqref{CpABboundPROPres3}.
We may of course assume that $\scrC_p(A,B)$ is non-empty;
thus let us fix some $Y_0\in\scrC_p(A,B)$,
and set $A_0:=AY_0=Y_0^{-1}B\neq \vecnull$.
Then for any $Y\in\GL_n(\F_p)$,
the condition $Y\in\scrC_p(A,B)$ is equivalent with
$A_0Y_0^{-1}Y=Y^{-1}Y_0A_0$,
viz., $Y_0^{-1}Y\in\scrC_p(A_0,A_0)$.
Hence
\begin{align*}
\#\scrC_p(A,B)=\#\scrC_p(A_0,A_0).
\end{align*}

Let $V=\{A_0\vecx\col\vecx\in\F_p^n\}=\{A\vecx\col\vecx\in\F_p^n\}$;
this is an $r$-dimensional subspace of $\F_p^n$. 
Let us note that
\begin{align}\label{CpABboundRELPROPpf1}
\forall Y\in\scrC_p(A_0,A_0):\qquad
Y|_V\in\GL(V).
\end{align}
Indeed, $Y\in\scrC_p(A_0,A_0)$
implies $A_0Y\vecx=Y^{-1}A_0\vecx$ for all $\vecx\in\F_p^n$;
hence 
$V=Y^{-1}(V)$,
and so $Y|_V\in\GL(V)$.

Let us fix $\vecb_1,\ldots,\vecb_{n-r}$ to be a basis of 
some complementary subspace
of $V$ in $\F_p^n$.
Now let $Y_1\in\GL(V)$ be given.
Then for any $Y\in\scrC_p(A_0,A_0)$ with $Y|_V=Y_1$ (if such a $Y$ exists at all),
we have
$A_0Y(\vecb_j)=Y^{-1}A_0(\vecb_j)=Y_1^{-1}A_0(\vecb_j)$
for every $j$.
This means that $Y(\vecb_j)$ belongs to the preimage of
$Y_1^{-1}A_0(\vecb_j)$ under $A_0$. 
Since $A_0$ is a linear map on $\mathbb{F}_p^n$ of rank $r$, and the preimage of $Y_1^{-1}A_0(\vecb_j)$ under $A_0$ is not empty, the preimage is an affine linear subspace of $\mathbb{F}_p^n$ of dimension $n-r$. 
It follows that the tuple $Y(\vecb_1),\ldots,Y(\vecb_{n-r})$
can be chosen in at most $p^{(n-r)^2}$ ways.
Now since $Y$ is determined by linearity from $Y_1$ and
the elements $Y(\vecb_1),\ldots,Y(\vecb_{n-r})$,
we conclude that:
\begin{align}\label{CpABboundRELPROPpf2}
\forall Y_1\in\GL(V):
\qquad
\#\{Y\in\scrC_p(A_0,A_0)\col Y|_V=Y_1\}\leq p^{(n-r)^2}.
\end{align}

It follows from \eqref{CpABboundRELPROPpf1}, \eqref{CpABboundRELPROPpf2}
that
\begin{align*}
\#\scrC_p(A,B)=
\#\scrC_p(A_0,A_0)\leq p^{(n-r)^2}\#\GL(V)\leq p^{(n-r)^2+r^2},
\end{align*}
i.e.\ \eqref{CpABboundPROPres3} holds.
\end{proof}

We now turn to the problem of bounding 
$\#\scrC_{p^\alpha}(A,B)$
for $\alpha\geq2$.
We will start by proving a bound on the following
quantity, which turns out to be relevant 
for bounding 
both $\#\scrC_{p^\alpha}(A,B)$
and the Gauss sum $G_p(C,D)$.
For any $C \in \M_n(\F_p)$, we set
\begin{equation}\label{dCdef}
d(C)= \dim_{\F_p} \scrA(C),
\qquad\text{where}
\quad
\scrA(C) = \{Z \in \M_n(\F_p) \, : \, CZ + ZC = 0 \}.
\end{equation}
Note that $\scrA(C)$ is a vector subspace of $\M_n(\F_p)\cong \F_p^{n^2}$, 
and $\#\scrA(C)=p^{d(C)}$.  
\begin{lemma}\label{dCBOUNDlem}
For any $C\in\M_n(\F_p)\setminus\{0\}$,
if either $p>2$ or $C\neq I$ then
\begin{align}\label{dCBOUNDlemRES}
d(C)\leq (n-1)^2+1.
\end{align}
\end{lemma}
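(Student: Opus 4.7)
The plan is to pass to the algebraic closure $\overline{\F_p}$, use Jordan canonical form to express $d(C)$ in terms of block sizes, and then compare it with the dimension of the centralizer of $C$ in $\M_n(\overline{\F_p})$.

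First I would dispose of the scalar case: if $C=\lambda I_n$ is a nonzero scalar matrix, the hypothesis (either $p>2$ or $C\neq I$) guarantees $2\lambda\not\equiv 0\pmod{p}$---in $\F_2$ the only nonzero scalar matrix is $I$ itself, which is excluded. Hence $Z\mapsto CZ+ZC=2\lambda Z$ is invertible and $d(C)=0\leq (n-1)^2+1$.

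Assuming now that $C$ is non-scalar, I would pass to $\overline{\F_p}$---both $\dim\scrA(C)$ and the centralizer dimension are invariant under base extension---and put $C$ in Jordan canonical form $C=\bigoplus_i J_i$ with $J_i$ a Jordan block of size $m_i$ and eigenvalue $\lambda_i$. Writing $Z=(Z_{ij})$ in the matching block decomposition, the condition $CZ+ZC=0$ decouples into the Sylvester equations $J_iZ_{ij}=Z_{ij}(-J_j)$, whose solution space has dimension $\min(m_i,m_j)$ when $\lambda_i+\lambda_j=0$ and $0$ otherwise. Writing $a_\lambda(k)$ for the number of Jordan blocks of $C$ at eigenvalue $\lambda$ of size at least $k$, and using $\min(m_i,m_j)=\#\{k\geq 1\col m_i\geq k\text{ and }m_j\geq k\}$, this yields
\[
d(C)=\sum_{k\geq 1}\sum_{\lambda} a_\lambda(k)\, a_{-\lambda}(k).
\]

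The heart of the argument is to compare this with $\dim\mathrm{Cent}(C)=\sum_{k\geq 1}\sum_\lambda a_\lambda(k)^2$, which is the analogous Jordan-form formula for the centralizer. Applying $2uv\leq u^2+v^2$ to the pair $u=a_\lambda(k)$, $v=a_{-\lambda}(k)$, summing over $\lambda$ and relabelling $\lambda\mapsto -\lambda$ in half of the terms, gives $\sum_\lambda a_\lambda(k)\,a_{-\lambda}(k)\leq\sum_\lambda a_\lambda(k)^2$; hence $d(C)\leq\dim\mathrm{Cent}(C)$. It then suffices to establish the classical bound $\dim\mathrm{Cent}(C)\leq (n-1)^2+1$ for non-scalar $C$, which is immediate from the same formula: $\{a_\lambda(k)\}$ is a partition of $n$, and $C$ is scalar precisely when this partition is concentrated in a single entry $a_\lambda(1)=n$. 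For any other partition, the largest entry $M$ satisfies $M\leq n-1$; if $M=n-1$ the remaining entries sum to $1$, giving $\sum a^2=(n-1)^2+1$, while if $M\leq n-2$ then $\sum a^2\leq Mn\leq n(n-2)<(n-1)^2+1$.

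The main technical delicacy will be the bookkeeping around the pairing $\lambda\leftrightarrow -\lambda$, especially in characteristic $2$ where $\lambda=-\lambda$ for every $\lambda$; in that regime the AM-GM step collapses to an equality $d(C)=\dim\mathrm{Cent}(C)$, and the argument reduces cleanly to the centralizer bound.
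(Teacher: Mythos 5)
Your proof is correct. The first half coincides with the paper's: pass to $\oF_p$, put $C$ in Jordan form, and reduce $CZ+ZC=0$ to Sylvester equations block by block, obtaining $d(C)=\sum_{i,j:\,\lambda_i=-\lambda_j}\min(n_i,n_j)$. Where you genuinely diverge is in how you bound this sum. The paper proceeds by a three-way case analysis: it first uses $\min(n_i,n_j)\leq n_in_j(\tfrac12+\tfrac12\delta_{n_i=n_j=1})$ to dispose of the non-diagonalizable case, and then treats the diagonalizable case separately for $p\neq2$ (pairing $\lambda$ with $-\lambda$ and optimizing $d_0^2+\tfrac12(n-d_0)^2$) and for $p=2$ (where the hypothesis $C\neq I$ enters to force at least two eigenvalues). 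Your route — rewriting $d(C)=\sum_k\sum_\lambda a_\lambda(k)a_{-\lambda}(k)$ via the conjugate-partition identity, comparing it by AM--GM with $\dim\mathrm{Cent}(C)=\sum_k\sum_\lambda a_\lambda(k)^2$, and then invoking the elementary bound $\sum a^2\leq(n-1)^2+1$ for any partition of $n$ other than $\{n\}$ — handles diagonalizable and non-diagonalizable matrices, and both characteristics, in one stroke; it also isolates the use of the hypothesis ($p>2$ or $C\neq I$) to the scalar case up front, which is exactly where it is needed. The two arguments use the hypothesis at different places but to the same effect, and both are complete; yours is the more uniform and conceptually transparent of the two, at the modest cost of importing the standard centralizer-dimension formula.
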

\begin{proof}
Let us fix an algebraic closure 
$\oF_p$
of $\F_p$.
For any $C'\in\M_n(\oF_p)$ we define
\begin{align*}
d(C')=\dim_{\oF_p}\{Z\in\M_n(\oF_p)\col C'Z+ZC'=0\}.
\end{align*}
Note that this formula is consistent with \eqref{dCdef} if $C'\in\M_n(\F_p)$.
Note also that $d(C)=d(TCT^{-1})$ for any $T\in\GL_n(\oF_p)$.
We may now choose $T\in\GL_n(\oF_p)$
so that $C':=TCT^{-1}$ is in
Jordan canonical form.
Thus let us assume that $C'$ has 
Jordan blocks $J_1,\ldots,J_k$ (in this order)
where 
$J_i$ is the $n_i\times n_i$ matrix
\begin{align}\label{JORDANblock}
J_i=\begin{pmatrix}
\lambda_i &  1    &    &&&
\\
       & \lambda_i & 1 & & 0 &
\\
 & & \cdots &&&
\\
 &&& \cdots &&
\\
& 0 &&& \lambda_i & 1
\\
&&&&& \lambda_i
\end{pmatrix}
\hspace{50pt}
(\lambda_i\in\oF_p).
\end{align}
Writing $Z$ in block decomposed form as 
$Z=(Z_{i,j})$ with $Z_{i,j}\in\M_{n_i,n_j}(\oF_p)$
for $i,j\in\{1,\ldots,k\}$,
one notes that $C'Z+ZC'=0$ holds
if and only if $J_iZ_{i,j}+Z_{i,j}J_j=0$ for all pairs
$i,j$.
Hence
\begin{align}\label{dCBOUNDlempf1}
d(C)=d(C')=\sum_{i=1}^k\sum_{j=1}^k
\dim_{\oF_p}\{Z\in\M_{n_i,n_j}(\oF_p)
\col J_iZ+ZJ_j=0\}.
\end{align}

We now claim that
\begin{align}\label{dCBOUNDlempf2}
\dim_{\oF_p}\{Z\in\M_{n_i,n_j}(\oF_p)
\col J_iZ+ZJ_j=0\}
=\begin{cases}
\min(n_i,n_j)&\text{if }\: \lambda_i=-\lambda_j
\\
0&\text{if }\: \lambda_i\neq -\lambda_j.
\end{cases}
\end{align}
To prove this, note that writing 
$Z=(z_{a,b})\in\M_{n_i,n_j}(\oF_p)$,
the relation $J_iZ+ZJ_j=0$ holds if and only if
\begin{align}\label{dCBOUNDlempf3}
(\lambda_i+\lambda_j)z_{a,b}
+z_{a+1,b}+z_{a,b-1}=0
\qquad(\forall a\in\{1,\ldots,n_i\},\: b\in\{1,\ldots,n_j\}),
\end{align}
where we understand that $z_{n_i+1,b}=0$ for all $b$
and $z_{a,0}=0$ for all $a$.
Let us first assume $\lambda_i=-\lambda_j$,
so that the equation \eqref{dCBOUNDlempf3}
simply reads $z_{a+1,b}+z_{a,b-1}=0$.
This implies that the matrix entries are alternating along each diagonal,
viz., for any fixed $a'\in\{1,\ldots,n_i\}$ and $b'\in\{1,\ldots,n_j\}$
with either $a'=1$ or $b'=1$,
we have $z_{a',b'}=-z_{a'+1,b'+1}=z_{a'+2,b'+2}=\cdots=
(-1)^{\ell}z_{a'+\ell,b'+\ell}$ where $\ell=\min(n_i-a',n_j-b')$.
But we also have $z_{a',1}=0$ for all $a'\geq2$ (by \eqref{dCBOUNDlempf3} applied with $a=a'-1$ and $b=1$)
and $z_{n_i,b'}=0$ for all $b'<n_j$ (by \eqref{dCBOUNDlempf3} applied with $a=n_i$ and $b=b'+1$).
Hence all the diagonals which start at $z_{a',1}$ with $1<a'\leq n_i$ vanish completely;
and if $n_i<n_j$ then also the diagonals which start at $z_{1,b'}$ with 
$1\leq b'\leq n_j-n_i$ vanish completely.
Conversely one verifies that any matrix having vanishing diagonals as just described,
and the remaining diagonals alternating,
satisfies \textit{all} the relations in \eqref{dCBOUNDlempf3}.
Furthermore, there are exactly $\min(n_i,n_j)$ diagonals which are not forced to vanish.
Hence \eqref{dCBOUNDlempf2} holds
in the case $\lambda_i=-\lambda_j$.

Next we assume $\lambda_i\neq-\lambda_j$.
Then, applying \eqref{dCBOUNDlempf3} for
$b=1$ and $a=n_i,n_i-1,\ldots,1$ (in this order)
we get $z_{a,1}=0$ for all $a$.
Next, applying \eqref{dCBOUNDlempf3} for
$b=2$ and $a=n_i,n_i-1,\ldots,1$ 
gives $z_{a,2}=0$ for all $a$.
This may be repeated successively for $b=3,\ldots,n_j$,
finally giving $Z=0$.
Hence \eqref{dCBOUNDlempf2} holds
also in the case $\lambda_i\neq-\lambda_j$.

Using \eqref{dCBOUNDlempf2} in \eqref{dCBOUNDlempf1},
we obtain
\begin{align}\label{dCBOUNDlempf4}
d(C)=\sum_{i=1}^k
\sum_{\substack{j=1\\(\lambda_j=-\lambda_i)}}^k\min(n_i,n_j).
\end{align}
This implies
\begin{align}\label{dcBOUNDlempf100}
d(C)\leq \sum_{i=1}^k\sum_{j=1}^k n_in_j\Bigl(\tfrac12+\tfrac12\delta_{n_i=n_j=1}\Bigr),
\end{align}
and using $\sum_i n_i=n$, 
the right-hand side of \eqref{dcBOUNDlempf100} is seen to equal
$\frac12 n^2+\frac12 m^2$,
where $m:=\#\{i\col n_i=1\}$.
If $n_i\geq2$ for some $i$ 
then $m\leq n-2$,
and so
\begin{align*}
d(C)\leq\tfrac12 n^2+\tfrac12(n-2)^2=(n-1)^2+1,
\end{align*}
i.e.\ \eqref{dCBOUNDlemRES} holds.

Hence from now on we may assume that $n_i=1$ for all $i$,
viz., $C$ is diagonalizable.
Then \eqref{dCBOUNDlempf4} gives
\begin{align*}
d(C)=\sum_{\lambda\in S}d_\lambda d_{-\lambda},
\end{align*}
where $S=\{\lambda_1,\ldots,\lambda_k\}$ 
is the set of eigenvalues of $C$
and $d_\lambda=\sum_{i:\: \lambda_i=\lambda}n_i$
is the dimension of the eigenspace for $\lambda$.
First assume $p\neq2$. Then $\lambda\neq-\lambda$ for all $\lambda\in\oF_p\setminus\{0\}$
and thus we can choose a subset $S'\subset S$ such that
$S\setminus\{0\}\subset S'\cup(-S')$ and $S'\cap(-S')=\emptyset$.
Now
\begin{align*}
\sum_{\lambda\in S}d_\lambda d_{-\lambda}
=d_0^2+2\sum_{\lambda\in S'}d_\lambda d_{-\lambda}
\leq d_0^2+\frac12\sum_{\lambda\in S'}(d_\lambda+d_{-\lambda})^2
\leq d_0^2+\frac12\biggl(\sum_{\lambda\in S'}(d_\lambda+d_{-\lambda})\biggr)^2
\\
=d_0^2+\tfrac12(n-d_0)^2,
\end{align*}
and since $C\neq0$ we have $0\leq d_0\leq n-1$,
so that 
\begin{align*}
d_0^2+\tfrac12(n-d_0)^2
\leq (n-1)^2+1
\end{align*}
(with equality if and only if $n=2$ and $d_0=0$).
Hence \eqref{dCBOUNDlemRES} holds.

Finally assume $p=2$.
Then 
$d(C)=\sum_{\lambda\in S} d_\lambda^2$.
If $S$ is a singleton set, 
say $S=\{\lambda\}$,
then $C'=\lambda I$, and thus also $C=\lambda I$.
This forces $\lambda\in\F_2$,
and so $C\in\{0,I\}$.
Hence if $C\notin\{0,I\}$
then $\#S\geq2$,
and choosing some element $\lambda'\in S$ we get
\begin{align*}
d(C)=d_{\lambda'}^2+\sum_{\lambda\in S\setminus\{d_{\lambda'}\}}d_\lambda^2
\leq d_{\lambda'}^2+(n-d_{\lambda'})^2
\leq (n-1)^2+1,
\end{align*}
since $1\leq d_{\lambda'}\leq n-1$.
Thus \eqref{dCBOUNDlemRES} holds.
\end{proof}

For $p=2$ we will also need the following bound of similar type.
\begin{lemma}\label{vardCBOUNDlem}
For any $C\in\M_n(\F_2)$, 
\begin{align}\label{vardCBOUNDlemres}
\dim_{\F_2}\{Z\in\M_n(\F_2)\col Z+CZ+ZC=0\}\leq \frac12n^2.
\end{align}
\end{lemma}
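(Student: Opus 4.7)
The plan is to mirror the proof of \autoref{dCBOUNDlem}, adapting it to the equation $Z + CZ + ZC = 0$, which differs from $CZ + ZC = 0$ only by an extra ``$+1$'' that shifts the coefficient in the key coordinate equation.

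First I would base change to the algebraic closure $\oF_2$ (which preserves the dimension) and conjugate $C$ into Jordan canonical form $C'$ with blocks $J_1, \ldots, J_k$ of sizes $n_1, \ldots, n_k$ and eigenvalues $\lambda_1, \ldots, \lambda_k \in \oF_2$. Decomposing $Z = (Z_{ij})$ with $Z_{ij} \in \M_{n_i, n_j}(\oF_2)$, the equation decouples into the independent per-block equations $Z_{ij} + J_i Z_{ij} + Z_{ij} J_j = 0$. Writing $Z_{ij} = (z_{a,b})$ in coordinates gives
\[
(\lambda_i + \lambda_j + 1)\, z_{a,b} + z_{a+1,b} + z_{a,b-1} = 0,
\]
with the conventions $z_{n_i+1,b} = z_{a,0} = 0$. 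Repeating the case analysis from \autoref{dCBOUNDlem} verbatim, but with $\lambda_i + \lambda_j$ replaced by $\lambda_i + \lambda_j + 1$, shows that the $(i,j)$-block contributes $\min(n_i, n_j)$ to the total dimension when $\lambda_i + \lambda_j = 1$ and $0$ otherwise. Consequently
\[
\dim_{\F_2}\{Z \in \M_n(\F_2) : Z + CZ + ZC = 0\} = \sum_{i,j:\, \lambda_i + \lambda_j = 1} \min(n_i, n_j).
\]

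The main step is then to bound this sum by $\tfrac12 n^2$. Writing $m_\mu := \sum_{i:\, \lambda_i = \mu} n_i$ for the algebraic multiplicity of an eigenvalue $\mu \in \oF_2$, so that $\sum_\mu m_\mu = n$, I would use $\min(n_i, n_j) \leq \sqrt{n_i n_j}$ combined with $\sqrt{x} \leq x$ for $x \geq 1$ to majorize the right-hand side by $\sum_\mu m_\mu m_{1-\mu}$. The crucial simplification compared with \autoref{dCBOUNDlem} is that the equation $2\mu = 1$ has no solution in characteristic $2$, so $\mu \neq 1 - \mu$ always holds; the contributing indices therefore partition into disjoint unordered pairs $\{\mu, 1-\mu\}$, and the sum becomes $2 \sum_{\text{pairs}} m_\mu m_{1-\mu}$. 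Applying AM-GM in the form $2ab \leq (a+b)^2/2$, followed by $\sum_p x_p^2 \leq (\sum_p x_p)^2$ for non-negative $x_p$ together with $\sum_{\text{pairs}}(m_\mu + m_{1-\mu}) \leq n$, yields the bound $\tfrac12 n^2$.

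The only non-routine content is this final combinatorial step, and it is structurally simpler than the corresponding argument in \autoref{dCBOUNDlem} precisely because the pairing $\mu \leftrightarrow 1 - \mu$ has no fixed points in characteristic $2$ (in contrast to $\mu \leftrightarrow -\mu$ in the earlier lemma). The bound is tight, as can be seen by taking $n=2$ and $C = \mathrm{diag}(1, 0)$.
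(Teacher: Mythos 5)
Your proposal is correct and follows essentially the same route as the paper: pass to the Jordan form over $\oF_2$, compute that each block pair contributes $\min(n_i,n_j)$ exactly when $1+\lambda_i+\lambda_j=0$, bound $\min(n_i,n_j)\leq n_in_j$ to reduce to $\sum_\mu m_\mu m_{1-\mu}$, and exploit that $\mu\mapsto 1-\mu$ is fixed-point-free in characteristic $2$ to pair up the terms and conclude via $2ab\leq\tfrac12(a+b)^2$ and $\sum_p x_p^2\leq(\sum_p x_p)^2$. The only cosmetic difference is your detour through $\min(n_i,n_j)\leq\sqrt{n_in_j}\leq n_in_j$, where the paper bounds $\min(n_i,n_j)\leq n_in_j$ directly.
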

\begin{proof}
The proof of \autoref{dCBOUNDlem} carries over with some modifications.
Introducing the Jordan decomposition of $C$ exactly as in that proof,
the analogue of \eqref{dCBOUNDlempf1} now says that the dimension in the 
left hand side of \eqref{vardCBOUNDlemres} equals
\begin{align}\label{vaddCBOUNDlempf1}
\sum_{i=1}^k\sum_{j=1}^k
\dim_{\oF_2}\{Z\in\M_{n_i,n_j}(\oF_2)
\col Z+J_iZ+ZJ_j=0\}.
\end{align}
Here we have, just as in \eqref{dCBOUNDlempf2},
\begin{align*}
\dim_{\oF_2}\{Z\in\M_{n_i,n_j}(\oF_2)\col Z+J_iZ+ZJ_j=0\}=\begin{cases}
\min(n_i,n_j)&\text{if }\: 1+\lambda_i+\lambda_j=0
\\
0&\text{if }\: 1+\lambda_i+\lambda_j\neq0.
\end{cases}
\end{align*}
(Indeed, $Z+J_iZ+ZJ_j=0$ is equivalent with
\eqref{dCBOUNDlempf3}
but with $\lambda_i+\lambda_j$ replaced by $1+\lambda_i+\lambda_j$.)
Hence the dimension in the 
left hand side of \eqref{vardCBOUNDlemres} is
\begin{align*}
\sum_{i=1}^k\sum_{\substack{j=1\\(\lambda_j=-\lambda_i-1)}}^k\min(n_i,n_j)
\leq \sum_{i=1}^k\sum_{\substack{j=1\\(\lambda_j=-\lambda_i-1)}}^k n_in_j
=\sum_{\lambda\in S}d_\lambda d_{-1-\lambda},
\end{align*}
where $S=\{\lambda_1,\ldots,\lambda_k\}$ 
is the set of eigenvalues of $C$
and $d_\lambda=\sum_{i:\: \lambda_i=\lambda}n_i$
is the generalized eigenspace dimension for $\lambda$.
Now since $\lambda\neq-1-\lambda$ for all $\lambda\in\oF_2$,
we can choose a subset $S'\subset S$ such that
$S\subset S'\cup(-1-S')$ and $S'\cap(-1-S')=\emptyset$,
and we then get
\begin{align*}
\sum_{\lambda\in S}d_\lambda d_{-1-\lambda}
=2\sum_{\lambda\in S'}d_\lambda d_{-1-\lambda}
\leq \frac12\sum_{\lambda\in S'}(d_\lambda+d_{-1-\lambda})^2
\leq \frac12\biggl(\sum_{\lambda\in S'}(d_\lambda+d_{-1-\lambda})\biggr)^2
=\frac12 n^2.
\end{align*}
\end{proof}

\begin{lemma}\label{NLIFTSboundLEM}
Let $q$ be a prime power.
Then for any $A,B\in\M_n(\Z/q\Z)$ with $\gcd(q,A,B)=1$,
\begin{align*}
\#\scrC_{q}(A,B)\ll q^{(n-1)^2+1},
\end{align*}
where the implied constant is absolute.
\end{lemma}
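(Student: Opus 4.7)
The plan is to argue by induction on $\alpha$ where $q = p^\alpha$. The base case $\alpha = 1$ is precisely \autoref{CpABboundPROP}. As a preliminary reduction, I would note that if exactly one of $A, B$ is divisible by $p$ then $\scrC_{p^\alpha}(A,B)$ is empty, since any candidate $Y$ would give $AY \equiv Y^{-1}B \pmod p$ with $Y$ invertible mod $p$, forcing both sides to share the same $p$-divisibility. Thus the hypothesis $\gcd(q,A,B) = 1$ lets us assume both $A, B \not\equiv 0 \pmod p$.

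For the inductive step I would fix $\bar Y_0 \in \scrC_p(A,B)$ and count $N(\bar Y_0) := \#\{Y \in \scrC_{p^\alpha}(A,B) : Y \equiv \bar Y_0 \pmod p\}$ by a Hensel-type lifting one level at a time. For a solution $\tilde Y_0 \in \scrC_{p^{\alpha-1}}(A,B)$ reducing to $\bar Y_0$, the candidate extensions to level $\alpha$ have the form $Y = \tilde Y_0 + p^{\alpha-1} Z$ with $Z \in \M_n(\F_p)$; since $2(\alpha-1) \geq \alpha$, the expansion $Y^{-1} \equiv \tilde Y_0^{-1} - p^{\alpha-1} \tilde Y_0^{-1} Z \tilde Y_0^{-1} \pmod{p^\alpha}$ is exact, and after a linear change of variable on $Z$ the defining congruence reduces to $CZ + ZC \equiv W \pmod p$ with $C := A\bar Y_0 \pmod p$. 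The solution set is empty or an affine space of size $p^{d(C)}$. Iterating across $\alpha - 1$ lifting levels and summing over $\bar Y_0$ via \autoref{CpABboundPROP} gives
\[
\#\scrC_{p^\alpha}(A,B) \ll p^{(n-1)^2+1} \cdot p^{(\alpha-1)\max d(C)}.
\]
By \autoref{dCBOUNDlem}, $d(C) \leq (n-1)^2+1$ whenever $C \neq 0$, and (for $p=2$) whenever $C \neq I$; since $A \not\equiv 0 \pmod p$ and $\bar Y_0 \in \GL_n(\F_p)$, $C$ is automatically nonzero, so the desired bound follows outside the exceptional case $p=2$, $C=I$.

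The main obstacle is this exceptional case, where $CZ+ZC = 2Z$ vanishes identically in characteristic $2$ and the first-order Hensel step collapses. Here $C = I$ pins down $\bar Y_0 = A^{-1} \pmod 2$ uniquely, so only a single $\bar Y_0$ needs treatment. After left-translation $Y \mapsto \bar Y_0^{-1}Y$ one may assume $A, B \equiv I \pmod 2$, and parametrize $Y = I + 2W$ with $W \in \M_n(\Z/2^{\alpha-1}\Z)$. Expanding $AY - Y^{-1}B \pmod{2^\alpha}$ to second order in $W$ and decomposing $W = \sum_{i=0}^{\alpha-2} 2^i W_i$ dyadically produces a quadratic constraint on $W_0$ (which, after the shift $W_0 \mapsto W_0 + \bar M_A$, takes the simpler form $W_0(I+W_0) \equiv U \pmod 2$), followed by linear constraints $W_i + W_0 W_i + W_i W_0 \equiv U_i \pmod 2$ on each subsequent $W_i$ ($1 \leq i \leq \alpha-3$), with $W_{\alpha-2}$ unconstrained. \autoref{vardCBOUNDlem} applied with $C = W_0$ bounds each linear level by $2^{n^2/2}$; the quadratic constraint on $W_0$ is bounded by a direct count whose prototype is the count of idempotents in $\M_n(\F_2)$, of order $2^{n^2/2}$. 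Combining these contributions and comparing with $q^{(n-1)^2+1} = 2^{\alpha(n^2-2n+2)}$ yields the stated bound; the comparison is tightest at $n = 2$, where $n^2/2 = n^2 - 2n + 2$ and the absolute constant in $\ll$ becomes essential.
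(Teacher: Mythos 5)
Your proposal is correct and follows essentially the same route as the paper: a Hensel-type lifting count in which the number of lifts at each level is $p^{d(A\bar Y_0)}$, controlled by \autoref{dCBOUNDlem}, with the exceptional class $p=2$, $A\bar Y_0\equiv I$ reduced to the quadratic congruence $W_0+W_0^2\equiv U$ whose successive liftings are governed by \autoref{vardCBOUNDlem}. The only assertion you leave unproved --- that the quadratic level contributes only $O(2^{n^2/2})$ via an idempotent-type count --- is not needed: the trivial bound $2^{n^2}$ there still yields $\#\scrC_{2^\alpha}^{(1)}\leq 2^{\frac12(\alpha+1)n^2}\ll 2^{\alpha((n-1)^2+1)}$ with an absolute constant, which is exactly what the paper does.
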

\begin{proof}
Let us write $q=p^\alpha$ with $p$ a prime and $\alpha\geq1$.
Let $A,B\in\M_n(\Z/q\Z)$ and assume $\gcd(q,A,B)=1$,
viz., either $A\not\equiv0$ or $B\not\equiv0\mod p$. 
Note that $\#\scrC_q(A,B)=\#\scrC_q(B,A)$,
since for any $Y\in\GL_n(\Z/q\Z)$, %
$AY\equiv Y^{-1}B\mod q$ holds if and only if 
$(Y^{-1})^{-1}A\equiv BY^{-1}\mod q$.
Hence without loss of generality we may assume that 
$A\not\equiv0\mod p$.
In view of \eqref{CpABboundPROPres} in \autoref{CpABboundPROP},
it suffices to prove that for every $Y\in\scrC_p(A,B)$
there exist at most $p^{(\alpha-1)((n-1)^2+1)}$
lifts of $Y$ to $\scrC_{p^\alpha}(A,B)$,
that is, at most $p^{(\alpha-1)((n-1)^2+1)}$
matrices $Y'\in\scrC_{p^\alpha}(A,B)$
satisfying $[Y'\mod p]=Y$.
By induction over $\alpha$, it suffices to prove that 
for any $\alpha\geq1$, any $A,B\in\M_n(\Z/p^{\alpha+1}\Z)$
with $A\not\equiv 0\mod p$,
and any $Y\in\scrC_{p^\alpha}(A,B)$,
there exist at most $p^{(n-1)^2+1}$ matrices
$Y'\in\scrC_{p^{\alpha+1}}(A,B)$
satisfying $[Y'\mod p^\alpha]=Y$.
This holds trivially if there is \textit{no} such matrix $Y'$;
hence we may assume that there exists a matrix
$Y'_0\in\scrC_{p^{\alpha+1}}(A,B)$
with $[Y'_0\mod p^\alpha]=Y$.
Now the set of matrices 
$Y'\in\M_n(\Z/p^{\alpha+1}\Z)$
with $Y'\equiv Y'_0\mod p^{\alpha}$ can be parametrized
as $Y'=Y'_0(I+p^\alpha Z)$ with $Z$ running through
$\M_n(\F_p)$,
and we then compute that
\begin{align*}
AY'-{Y'}^{-1}B
\equiv p^\alpha (AY'_0Z+Z{Y'_0}^{-1}B)
\equiv p^\alpha (AY'_0Z+ZAY'_0)\mod p^{\alpha+1}.
\end{align*}
Hence $Y'$ lies in $\scrC_{p^{\alpha+1}}(A,B)$
if and only if $Z\in\scrA(AY_0')$.
Therefore, the number of admissible lifts $Y'$ equals
$\#\scrA(AY_0')=p^{d(AY_0')}$.
Note that $AY_0'\not\equiv0\mod p$;
hence if $p>2$ then 
by \autoref{dCBOUNDlem} we have
$d(AY_0')\leq(n-1)^2+1$,
and the proof is complete.

From now on we assume $p=2$.
In this case we decompose $\scrC_{2^\alpha}(A,B)$
as the disjoint union of the two sets
\begin{align*}
\scrC_{2^\alpha}^{(0)}(A,B):=\{Y\in\scrC_{2^\alpha}(A,B)\col AY\not\equiv I\mod 2\}
\end{align*}
and
\begin{align*}
\scrC_{2^\alpha}^{(1)}(A,B):=\{Y\in\scrC_{2^\alpha}(A,B)\col AY\equiv I\mod 2\}.
\end{align*}
For $\scrC_{2^\alpha}^{(0)}(A,B)$ the argument in the previous paragraph applies
(since we get $AY_0'\not\equiv I\mod 2$ as required in \autoref{dCBOUNDlem}),
and we thus obtain
\begin{align*}
\#\scrC_{2^\alpha}^{(0)}(A,B)\ll 2^{\alpha((n-1)^2+1)}
=q^{(n-1)^2+1}.
\end{align*}
We next consider $\scrC_{2^\alpha}^{(1)}(A,B)$.
Note that from now on we may assume that
both $A,B\in\GL_n(\Z/2^\alpha\Z)$ since otherwise
$\scrC_{2^\alpha}^{(1)}(A,B)=\emptyset$.
Substituting $X=AY$ we have
\begin{align*}
\#\scrC_{2^\alpha}^{(1)}(A,B)
=\#\{X\in\GL_n(\Z/2^\alpha\Z)\col
X^2\equiv AB\mod 2^\alpha\text{ and }X\equiv I\mod 2\}.
\end{align*}
Substituting next $X=I+2U$ with $U\in {\rm M}_n(\mathbb{Z}/2^{\alpha-1}\mathbb{Z})$, we see that $\#\scrC_{2}^{(1)}(A,B)\leq1$
(with equality if and only if $AB\equiv I\mod 2$),
while for $\alpha\geq2$ we obtain 
$\scrC_{2^\alpha}^{(1)}(A,B)=\emptyset$ if $AB\not\equiv I\mod 4$,
while in the case $AB\equiv I\mod 4$ we get,
after choosing
$B'\in\M_n(\Z/2^{\alpha-2}\Z)$ such that $AB\equiv I+4B'\mod 2^\alpha$:
\begin{align}\notag
\#\scrC_{2^\alpha}^{(1)}(A,B)
&=\#\{U\in\M_n(\Z/2^{\alpha-1}\Z)\col U+U^2\equiv B'\mod 2^{\alpha-2}\}
\\\label{NLIFTSboundLEMpf2}
&=2^{n^2}\#\{U\in\M_n(\Z/2^{\alpha-2}\Z)\col U+U^2\equiv B'\mod 2^{\alpha-2}\}.
\end{align}
In particular if $\alpha=2$ then 
$\#\scrC_{2^\alpha}^{(1)}(A,B)=2^{n^2}\leq 2^{2((n-1)^2+1)}$,
as desired.
To handle the case $\alpha\geq3$ we will prove that
for any $\beta\geq1$ and any 
$B'\in\M_n(\Z/2^{\beta}\Z)$,
\begin{align}\label{NLIFTSboundLEMpf1}
\#\{U\in\M_n(\Z/2^{\beta}\Z)\col U+U^2\equiv B'\mod 2^{\beta}\}\leq 2^{\frac12(\beta+1)n^2}.
\end{align}
This bound is trivial for $\beta=1$,
and to prove it for $\beta\geq2$ it suffices, by 
the same inductive lifting argument as in the first paragraph,
to prove that 
for any $\beta\geq2$
and any $U,B'\in\M_n(\Z/2^\beta\Z)$
with $U+U^2\equiv B'\mod 2^\beta$,
the number of $Z\in\M_n(\Z/2\Z)$ satisfying
$(U+2^{\beta-1}Z)+(U+2^{\beta-1}Z)^2\equiv B'\mod 2^\beta$
is at most $2^{\frac12n^2}$.
But the last equation is seen to be equivalent to
\begin{align*}
Z+UZ+ZU\equiv 0\mod2,
\end{align*}
and hence the claim follows from \autoref{vardCBOUNDlem}.
Using \eqref{NLIFTSboundLEMpf2} and \eqref{NLIFTSboundLEMpf1},
it follows that for all $\alpha\geq3$,
\begin{align*}
\#\scrC_{2^\alpha}^{(1)}(A,B)\leq 2^{\frac12(\alpha+1)n^2}
\leq 8\cdot 2^{\alpha((n-1)^2+1)}
=8q^{(n-1)^2+1},
\end{align*}
where the last inequality holds since
$3+\alpha((n-1)^2+1)-\frac12(\alpha+1)n^2
=\frac{\alpha-1}2(n-\frac{2\alpha}{\alpha-1})^2+\frac{\alpha-3}{\alpha-1}\geq0$.
This completes the proof of the lemma.
\end{proof}

\begin{lemma}\label{GpCDboundLEM}
For any prime $p$ and any $C,D\in\M_n(\F_p)$,
\begin{align*}
|G_p(C,D)|\leq p^{\frac12(n^2+d(C))}.
\end{align*}
\end{lemma}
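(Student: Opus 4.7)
My plan is to prove the bound by computing $|G_p(C,D)|^2$ via the classical squaring-and-substituting trick, then using the orthogonality of additive characters on $\M_n(\F_p)$ to isolate the sum to $\scrA(C)$, and finally applying the triangle inequality.

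Concretely, I would start from
\begin{align*}
|G_p(C,D)|^2 = \sum_{Z,W \in \M_n(\F_p)} e_p\bigl(\tr(CZ^2 - CW^2) + \tr(D(Z-W))\bigr),
\end{align*}
and substitute $Z = W + Y$ with $Y$ ranging over $\M_n(\F_p)$. Expanding $C(W+Y)^2 - CW^2 = CWY + CYW + CY^2$, using the cyclicity of the trace ($\tr(CYW) = \tr(WCY)$), and grouping yields
\begin{align*}
\tr(C(W+Y)^2 - CW^2) + \tr(DY) = \tr(W(CY+YC)) + \tr(CY^2 + DY).
\end{align*}
Therefore
\begin{align*}
|G_p(C,D)|^2 = \sum_{Y \in \M_n(\F_p)} e_p\bigl(\tr(CY^2 + DY)\bigr) \sum_{W \in \M_n(\F_p)} e_p\bigl(\tr(W(CY+YC))\bigr).
\end{align*}

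For the inner sum, the map $W \mapsto \tr(W(CY+YC))$ is an $\F_p$-linear functional on $\M_n(\F_p)$, so by orthogonality of characters the inner sum equals $p^{n^2}$ when $CY+YC=0$ (i.e., $Y \in \scrA(C)$) and vanishes otherwise. Hence
\begin{align*}
|G_p(C,D)|^2 = p^{n^2} \sum_{Y \in \scrA(C)} e_p\bigl(\tr(CY^2 + DY)\bigr).
\end{align*}
Applying the triangle inequality to the remaining sum, which has $\#\scrA(C) = p^{d(C)}$ terms, gives $|G_p(C,D)|^2 \leq p^{n^2 + d(C)}$, and taking square roots yields the claimed bound.

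There should not be any real obstacle here: the main computational point is just to verify carefully the identity $\tr(CWY) + \tr(CYW) = \tr(W(CY+YC))$, which follows directly from cyclicity of the trace, and the rest is orthogonality plus the trivial bound. Notably, this argument works uniformly for all primes $p$ (including $p=2$), since we avoided any cancellation argument that would require $\tr(CY^2)$ to vanish on $\scrA(C)$.
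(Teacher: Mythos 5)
Your proof is correct and follows essentially the same route as the paper: square the Gauss sum, substitute to shift one variable, use cyclicity of the trace to isolate a linear character in the other variable, apply orthogonality to restrict to $\scrA(C)$, and finish with the triangle inequality. The only difference is notational (which of the two summation variables you shift), so there is nothing further to add.
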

\begin{proof}
It follows from the definition,
\eqref{GpCDdef},
that
\begin{align*}
|G_p(C,D)|^2=\sum_{Z,Y\in\M_n(\F_p)}e_p\bigl({\rm tr}(C(Z^2-Y^2)+D(Z-Y))\bigr).
\end{align*}
Substituting $Z=X+Y$, this becomes
\begin{align*}
\sum_{X\in\M_n(\F_p)}
\sum_{Y\in\M_n(\F_p)}
e_p\bigl({\rm tr}((XC+CX)Y)+{\rm tr}(CX^2+DX)\bigr),
\end{align*}
and here the inner sum vanishes unless $XC+CX=0$.
Hence
\begin{align*}
|G_p(C,D)|^2\leq \biggl|p^{n^2}\sum_{X\in\scrA(C)}e_p(CX^2+DX)\biggr|
\leq p^{n^2}\#\scrA(C)=p^{n^2+d(C)}.
\end{align*}
\end{proof}

\begin{proposition}\label{KboundgenqPROP}
Let $q$ be a prime power.
Then for any $A,B\in\M_n(\Z/q\Z)$ with $\gcd(q,A,B)=1$,
\begin{align}\label{KboundgenqPROPres}
|K_n(A,B;q)|
&\ll_n q^{n^2-n+1}.
\end{align}
\end{proposition}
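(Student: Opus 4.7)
The plan is to split the analysis by the exponent $\beta$ in $q=p^\beta$.

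For $\beta=1$, the proposition is immediate from the Erd\'elyi--T\'oth bound \eqref{ETres1}, which already yields $|K_n(A,B;p)|\leq 2p^{n^2-n+1}$.

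For $\beta\geq 2$, I would first exploit the elementary symmetry $K_n(A,B;q)=K_n(B,A;q)$, obtained from the involution $X\mapsto X^{-1}$ of $\GL_n(\Z/q\Z)$ applied to the defining sum \eqref{Kndefrep}. Since $\gcd(q,A,B)=1$ forces at least one of $A,B$ to be nonzero mod $p$, this lets me assume $B\not\equiv 0\mod p$. If additionally $A\equiv 0\mod p$, then for every $Y\in\GL_n(\Z/q\Z)$ one has $AY\equiv 0\not\equiv Y^{-1}B\mod p$, so $\scrC_{p^\alpha}(A,B)=\emptyset$ for $\alpha=\lfloor\beta/2\rfloor\geq 1$; the Taylor expansion underlying \autoref{thm:Taylor_csq} then collapses to show that $K_n(A,B;q)=0$, and the bound is trivial. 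In the remaining case where both $A,B\not\equiv 0\mod p$, \autoref{thm:Taylor_csq} applies directly. Combining it with the count $\#\scrC_{p^\alpha}(A,B)\ll p^{\alpha((n-1)^2+1)}$ from \autoref{NLIFTSboundLEM}, and---in the odd case $\beta=2\alpha+1$---with the Gauss sum estimate $|G_p(C,D)|\leq p^{(n^2+(n-1)^2+1)/2}$ obtained from \autoref{GpCDboundLEM} together with \autoref{dCBOUNDlem}, the telescoping identity $n^2+(n-1)^2+1=2(n^2-n+1)$ forces all the exponents to line up as $\beta(n^2-n+1)$, giving $|K_n(A,B;q)|\ll q^{n^2-n+1}$.

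The main obstacle is the exception carved out of \autoref{dCBOUNDlem}: when $p=2$ and $C\equiv I\mod 2$ one has $d(C)=n^2$, so the Gauss sum bound degrades to $|G_2(I,D)|\leq 2^{n^2}$, which by itself would inflate the target exponent by $n-1$. This exception occurs only for those $Y\in\scrC_{2^\alpha}(A,B)$ satisfying $Y\equiv B\mod 2$, which pins $Y$ to a single residue class modulo $2$. I would split the $Y$-sum in the odd-case Taylor expansion accordingly, and use the inductive lifting argument underlying the proof of \autoref{NLIFTSboundLEM} to bound the number of such exceptional $Y$ by $O(2^{(\alpha-1)((n-1)^2+1)})$; a brief arithmetic check (the saving amounts to $2^{n^2-3n+3}$, which is $\geq 1$ for every positive integer $n$) then confirms that this sharper count more than absorbs the weaker Gauss sum factor and keeps the contribution of the exceptional $Y$'s within the target $q^{n^2-n+1}$.
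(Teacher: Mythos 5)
Your treatment of $\beta=1$ (via \eqref{ETres1}) and of $\beta\geq2$ with $p$ odd is exactly the paper's argument: dispose of the case where exactly one of $A,B$ is divisible by $p$ (the paper does this via \autoref{KvanishingLEM2}, you do it by observing $\scrC_{p^\alpha}(A,B)=\emptyset$ and rerunning the expansion behind \autoref{thm:Taylor_csq} -- both are fine), then combine \autoref{thm:Taylor_csq}, \autoref{NLIFTSboundLEM}, and for odd $\beta$ the Gauss sum bound $|G_p(C,D)|\leq p^{\frac12(n^2+(n-1)^2+1)}=p^{n^2-n+1}$ from \autoref{GpCDboundLEM} and \autoref{dCBOUNDlem}. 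The exponent bookkeeping you describe is correct.

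Where you diverge is the $p=2$ case, and there you have manufactured an obstacle that does not exist. The Gauss sum factor occurs exactly \emph{once} in \eqref{thm:Taylor_csqres2}; replacing $2^{n^2-n+1}$ by the trivial bound $|G_2(C,D)|\leq 2^{n^2}$ costs only a multiplicative factor $2^{n-1}$, which depends on $n$ alone and is absorbed into the implied constant of \eqref{KboundgenqPROPres} (the bound is $\ll_n$, not $\ll$). This is precisely what the paper does: for $p=2$ it simply writes $|G_2(C,D)|\leq 2^{n^2}\ll_n 2^{n^2-n+1}$ and moves on. Your proposed repair is therefore unnecessary; moreover, as justified it has a gap. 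You bound the exceptional $Y$ (those with $Y^{-1}B\equiv I\bmod 2$) by ``one residue class mod $2$ times the inductive lifting argument,'' but the per-step lift count in that argument is $2^{d(AY_0')}$, and for the exceptional class one has $AY_0'\equiv I\bmod 2$, hence $d(AY_0')=n^2$ rather than $(n-1)^2+1$ -- this is exactly the case excluded from \autoref{dCBOUNDlem}. To count these $Y$ one must run the separate $\scrC^{(1)}$ analysis from the proof of \autoref{NLIFTSboundLEM} (the substitution $X=I+2U$ and the equation $U+U^2\equiv B'$, controlled by \autoref{vardCBOUNDlem}); the naive lifting step does not deliver $O(2^{(\alpha-1)((n-1)^2+1)})$. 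Since the whole detour can be deleted in favour of the one-line trivial bound, the proof goes through, but you should recognize that the ``main obstacle'' was never one.
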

\begin{proof}
Let us write $q=p^\beta$ with $p$ a prime and $\beta\geq1$.
If $\beta=1$ then \eqref{KboundgenqPROPres}
follows from \eqref{ETres1};
hence from now on we assume $\beta\geq2$.
It follows from $\gcd(q,A,B)=1$ that
$A\not\equiv 0$ or $B\not\equiv 0\mod p$.
If exactly one of $A$ or $B$ is divisible by $p$ 
then $K_n(A,B;q)=0$ by \autoref{KvanishingLEM2};
hence from now on we may assume that both $A,B\not\equiv 0\mod p$.
We will now use the bound in \autoref{thm:Taylor_csq}.
Thus set $\alpha=\lfloor\beta/2\rfloor\geq1$.
By \autoref{NLIFTSboundLEM},
$\#\scrC_{p^\alpha}(A,B)\ll p^{\alpha((n-1)^2+1)}$,
and so
\begin{align*}
p^{\alpha n^2}\#\scrC_{p^\alpha}(A,B)\ll p^{2\alpha(n^2-n+1)}.
\end{align*}
Furthermore,
if $p>2$, then by \autoref{dCBOUNDlem}
we have $d(C)\leq(n-1)^2+1$ for all $C\in\M_n(\F_p)\setminus\{0\}$,
and hence by
\autoref{GpCDboundLEM},
\begin{align*}
\max\bigl\{|G_p(C,D)|\col C,D\in\M_n(\F_p),\: C\neq 0\bigr\}\leq
p^{\frac12(n^2+(n-1)^2+1)}
=p^{n^2-n+1}.
\end{align*}
On the other hand for $p=2$ we have the trivial bound
\begin{align*}
|G_p(C,D)|\leq 2^{n^2}\ll_n 2^{n^2-n+1}.
\end{align*}
Using these bounds in \autoref{thm:Taylor_csq}, we get
\begin{align*}
|K_n(A,B;q)|\ll_n p^{\beta(n^2-n+1)}=q^{n^2-n+1}.
\end{align*}
\end{proof}

By combining \autoref{KboundgenqPROP}
with \autoref{multiiplicativityLEM} and \autoref{KvanishingLEM2},
we now obtain a bound valid for general moduli.
\begin{theorem}\label{KBOUNDmaintheorem}
Let $\ve>0$, $q\geq2$ and $A,B\in\M_n(\Z/q\Z)$.
If $\gcd(q,A,B)=1$ then
\begin{align}\label{KBOUNDmaintheoremRES1}
|K_n(A,B;q)|\ll_{n,\ve} q^{n^2-n+1+\ve}.
\end{align}
If $\ell=\gcd(q,A)$ then
\begin{align}\label{KBOUNDmaintheoremRES2}
\bigl|K_n(A,B;q)\bigr|\ll_{n,\ve} q^{n^2}(q/\ell)^{-n+1+\ve}.
\end{align}
\end{theorem}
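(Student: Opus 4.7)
My plan is to deduce both bounds from the prime-power case by combining the multiplicativity relation (\autoref{multiiplicativityLEM}) with the vanishing/reduction identity (\autoref{KvanishingLEM2}) and the prime-power estimate (\autoref{KboundgenqPROP}). Using \autoref{multiiplicativityLEM}, factor $q = \prod_j p_j^{\beta_j}$ to write $|K_n(A, B; q)| = \prod_j |K_n(c_j A, c_j B; p_j^{\beta_j})|$. Since each $c_j$ is a unit modulo $p_j^{\beta_j}$, the local gcds $\ell_j := \gcd(p_j^{\beta_j}, A)$ are unaffected by the twist and satisfy $\prod_j \ell_j = \ell = \gcd(q, A)$; moreover $\gcd(q, A, B) = 1$ is equivalent to $\gcd(p_j^{\beta_j}, A, B) = 1$ for every $j$. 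Since the targets in \eqref{KBOUNDmaintheoremRES1} and \eqref{KBOUNDmaintheoremRES2} factor multiplicatively over the prime-power divisors of $q$, it suffices to prove the corresponding local bounds, absorbing the product of the $\omega(q)$ implicit constants into $q^{\ve}$ via the standard estimate $C^{\omega(q)} \ll_{C,\ve} q^\ve$.

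For the local step, fix a prime power $q = p^\beta$ and write $\ell_A = \gcd(q, A)$, $\ell_B = \gcd(q, B)$, and $m = \min(\ell_A, \ell_B)$. If $m = q$ then $A = B = \vecnull$ and the trivial bound $|K_n(\vecnull, \vecnull; q)| \leq \#\GL_n(\Z/q\Z) \leq q^{n^2}$ matches both claimed targets (as then $q/\ell_A = 1$). Otherwise $m$ is a proper divisor of $q$, hence $m \mid q/\prod_{p \mid q} p = p^{\beta-1}$, and by construction $m \mid A$ and $m \mid B$; \autoref{KvanishingLEM2} then yields $K_n(A, B; q) = m^{n^2} K_n(A/m, B/m; q/m)$. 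By the minimality of $m$, one of $\ell_A/m$ or $\ell_B/m$ equals $1$, so $\gcd(q/m, A/m, B/m) = 1$, and \autoref{KboundgenqPROP} applied to the reduced sum gives $|K_n(A/m, B/m; q/m)| \ll_n (q/m)^{n^2-n+1+\ve}$. Combining, I obtain the uniform local estimate
\begin{align*}
|K_n(A, B; q)| \ll_n q^{n^2-n+1+\ve}\, m^{n-1-\ve}.
\end{align*}

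From this local estimate both parts follow directly. For \eqref{KBOUNDmaintheoremRES1}, the hypothesis $\gcd(q, A, B) = 1$ forces $m = 1$ at each prime, so the bound reduces to $q^{n^2-n+1+\ve}$. For \eqref{KBOUNDmaintheoremRES2}, the inequality $m \leq \ell_A$ combined with $n \geq 2$ (the case $n = 1$ being trivially implied by $\#\GL_n(\Z/q\Z) \leq q$) gives $m^{n-1-\ve} \leq \ell_A^{n-1-\ve}$, and the algebraic identity $q^{n^2-n+1+\ve}\ell_A^{n-1-\ve} = q^{n^2}(q/\ell_A)^{-n+1+\ve}$ recovers the claimed exponent. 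The main technical step is to verify cleanly that the reduction by $m$ lands in the hypotheses of \autoref{KboundgenqPROP}, i.e.\ that $\gcd(q/m, A/m, B/m) = 1$; this is automatic from the definition $m = \min(\ell_A, \ell_B)$. The only genuine boundary case is $A = \vecnull$ (or $B = \vecnull$), where \autoref{KvanishingLEM2} is not applicable and one instead uses the trivial count of invertible matrices.
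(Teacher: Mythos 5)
Your argument is correct in substance and follows essentially the same route as the paper: multiplicativity (\autoref{multiiplicativityLEM}) to reduce to prime powers, \autoref{KvanishingLEM2} to strip out the common $p$-power content, and \autoref{KboundgenqPROP} for the resulting primitive sum. The only structural difference is that you strip the symmetric quantity $m=\gcd(p^\beta,A,B)$ at each prime, whereas the paper strips $p^{\gamma}=\gcd(p^\beta,A)$ and invokes vanishing of the sum when that power fails to divide $B$; these are interchangeable, and your verification that the reduced data satisfy $\gcd(q/m,A/m,B/m)=1$ (via minimality of $m$) is correct, as is your separate trivial treatment of the cases $A=B=\vecnull$ and $n=1$.

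There is, however, one quantitative slip in the constant bookkeeping for \eqref{KBOUNDmaintheoremRES2}. Absorbing the product of all $\omega(q)$ local implied constants into $q^{\ve}$ yields $q^{n^2+\ve}(q/\ell)^{-n+1+\ve}$, and the stray factor $q^{\ve}$ cannot be traded for a power of $q/\ell$: when $q/\ell$ stays bounded (say $q/\ell=2$) while $q\to\infty$, the target $q^{n^2}(q/\ell)^{-n+1+\ve}\asymp q^{n^2}$ is strictly smaller than $q^{n^2+\ve}$, so the claimed bound does not follow as written. The repair is the one the paper makes implicitly: the nontrivial local bound, which carries the constant $C=C(n)$ from \autoref{KboundgenqPROP}, is invoked only at primes dividing $q/M$ with $M=\gcd(q,A,B)$, while at the remaining primes the trivial bound $\#\GL_n(\Z/p^{\beta}\Z)\leq p^{\beta n^2}$ holds with constant $1$. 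Hence the accumulated constant is $C^{\omega(q/M)}\ll_{n,\ve}(q/M)^{\ve}$, the total is $\ll q^{n^2}(q/M)^{-n+1+\ve}$, and since $M\mid\ell$ and the exponent $-n+1+\ve$ is nonpositive for $n\geq2$ and small $\ve$, this is at most $q^{n^2}(q/\ell)^{-n+1+\ve}$. With that one-line adjustment your proof is complete.
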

(See also \cite[Theorem 1.8]{ETZ} for somewhat stronger and more precise bounds.)
\begin{proof}
If $\gcd(q,A,B)=1$ then it follows from
\autoref{multiiplicativityLEM}
and \autoref{KboundgenqPROP}
that
\begin{align*}
|K_n(A,B;q)|\ll_{n,\ve} q^{n^2-n+1+\ve}.
\end{align*}

Next we assume instead $\ell=\gcd(q,A)$.
Write $q=\prod_{i=1}^s p_i^{\alpha_i}$
and $\ell=\prod_{i=1}^s p_i^{\gamma_i}$
(thus $0\leq\gamma_i\leq\alpha_i$ and $0<\alpha_i$ for all $i$).
Picking $c_i\in(\Z/p^{\alpha_i}\Z)^\times$ as in 
\autoref{multiiplicativityLEM}
we have
\begin{align*}
K_n(A,B;q)=\prod_{i=1}^s K_n(c_iA,c_iB;p^{\alpha_i}).
\end{align*}
For each $i$,
if $\gamma_i<\alpha_i$ then
$p^{\alpha_i}\nmid A$,
and so by \autoref{KvanishingLEM2} 
we have
\begin{align*}
K_n(c_iA,c_iB;p^{\alpha_i})=p^{\gamma_i n^2}
K_n(p^{-\gamma_i}c_iA,p^{-\gamma_i}c_iB;p^{\alpha_i-\gamma_i})
\end{align*}
if $p^{\gamma_i}\mid B$,
and otherwise
$K_n(c_iA,c_iB;p^{\alpha_i})=0$.
Hence if $\gamma_i<\alpha_i$ then by
\autoref{KboundgenqPROP},
\begin{align*}
|K_n(c_iA,c_iB;p^{\alpha_i})|\leq C p^{\gamma_i n^2} p^{(\alpha_i-\gamma_i)(n^2-n+1)}
=C p^{\alpha_i n^2}p^{(\alpha_i-\gamma_i)(-n+1)},
\end{align*}
where $C=C(n)\geq1$ is the implied constant in 
\eqref{KboundgenqPROPres}.
In the remaining case, when $\gamma_i=\alpha_i$,
we use the \textit{trivial} bound
$|K_n(c_iA,c_iB;p^{\alpha_i})|\leq p^{\alpha_i n^2}$.
Multiplying over all $i$, we obtain:
\begin{align*}
|K_n(A,B;q)|\leq q^{n^2}\prod_{\substack{i=1\\(\gamma_i<\alpha_i)}}^s \Bigl(Cp^{(\alpha_i-\gamma_i)(-n+1)}\Bigr)
\ll_{n,\ve} q^{n^2}(q/\ell)^{-n+1+\ve}.
\end{align*}
Hence we have proved \eqref{KBOUNDmaintheoremRES2}.
\end{proof}

Finally we deal with the Ramanujan sum case.
\begin{proposition}\label{prop:mtx_ramanujan}
Let $p$ be a prime and $m\in \mathbb{Z}^+$. 
For $A\in {\rm M}_{n}(\mathbb{Z}/p^m\Z)$,
when $p^{m-1}\nmid A$, 
\begin{equation}\label{prop:mtx_ramanujanRES1}
K_n(\vecnull, A; p^m) = 0.
\end{equation}
Assume that $p^{m-1}\mid A$ and let $r\in \{0, 1, \ldots, n\}$ be the rank of the matrix $p^{-(m-1)}A$ in ${\rm M}_{n}(\mathbb{F}_p)$. 
Then 
\begin{equation}\label{prop:mtx_ramanujanRES2}
K_n(\vecnull, A; p^m) = p^{(m-1)n^2} (-1)^r
p^{-\frac{r(r+1)}{2}+ rn} 
\prod_{i=0}^{n-r-1} (p^{n-r}-p^{i}). 
\end{equation}
\end{proposition}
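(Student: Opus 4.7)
\textbf{Proof plan for Proposition \ref{prop:mtx_ramanujan}.} The strategy is to first reduce the modulus from $p^m$ to $p$ by a Hensel-style splitting, and then evaluate the resulting sum over $\GL_n(\F_p)$ via Möbius inversion on the subspace lattice combined with the $q$-binomial theorem.

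Substituting $Y := X^{-1}$ (a bijection of $\GL_n(\Z/p^m\Z)$) I rewrite $K_n(\vecnull, A; p^m) = \sum_{Y} e_{p^m}({\rm tr}(AY))$. Writing $Y = Y_1 + pY_2$ with $Y_1$ running over a fixed system of lifts of $\M_n(\F_p)$ and $Y_2 \in \M_n(\Z/p^{m-1}\Z)$, and using that $Y$ is invertible iff $Y_1 \bmod p \in \GL_n(\F_p)$, the sum factors as
$$
\biggl(\sum_{Y_1} e_{p^m}({\rm tr}(AY_1))\biggr) \cdot \biggl(\sum_{Y_2} e_{p^{m-1}}({\rm tr}(AY_2))\biggr).
$$
The inner sum vanishes unless $p^{m-1} \mid A$, in which case it equals $p^{(m-1)n^2}$. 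This proves \eqref{prop:mtx_ramanujanRES1}; and when $p^{m-1} \mid A$, writing $A = p^{m-1}A'$ with $A' \in \M_n(\F_p)$ of rank $r$ reduces \eqref{prop:mtx_ramanujanRES2} to evaluating $S := \sum_{\bar Y \in \GL_n(\F_p)} e_p({\rm tr}(A' \bar Y))$.

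Since $\bar Y \mapsto U \bar Y V$ is a bijection of $\GL_n(\F_p)$ for any $U, V \in \GL_n(\F_p)$, the sum $S$ depends on $A'$ only through its rank, and I may take $A' = \Lambda_r := \diag(I_r, 0_{n-r})$. To compute $S$ for $A' = \Lambda_r$, I apply Möbius inversion on the lattice of $\F_p$-subspaces of $\F_p^n$. The key direct computation is that, for any subspace $W \subseteq \F_p^n$, parametrizing $Y \in \M_n(\F_p)$ with $\mathrm{im}(Y) \subseteq W$ via a basis of $W$ and free coefficients yields
$$
\sum_{Y \in \M_n(\F_p),\, \mathrm{im}(Y) \subseteq W} e_p({\rm tr}(\Lambda_r Y)) = p^{n \dim W}\, \mathbf{1}_{W \subseteq \ker \Lambda_r}.
$$
With the subspace Möbius function $\mu(W, \F_p^n) = (-1)^{n - \dim W} p^{\binom{n - \dim W}{2}}$ and the Gaussian count $\binom{n-r}{k}_p$ of $k$-dimensional subspaces of the $(n-r)$-dimensional space $\ker \Lambda_r$, this gives
$$
S = \sum_{k=0}^{n-r} \binom{n-r}{k}_p (-1)^{n-k} p^{\binom{n-k}{2} + nk}.
$$

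The elementary identity $\binom{n-k}{2} + nk = \binom{n}{2} + \binom{k+1}{2}$ then lets me pull out a factor $(-1)^n p^{\binom{n}{2}}$ and recognize the remainder as $\sum_k \binom{n-r}{k}_p (-p)^k p^{\binom{k}{2}}$, which by the $q$-binomial theorem equals $\prod_{j=1}^{n-r}(1 - p^j) = (-1)^{n-r} \prod_{j=1}^{n-r}(p^j - 1)$. Hence $S = (-1)^r p^{\binom{n}{2}} \prod_{j=1}^{n-r}(p^j - 1)$; using $\#\GL_{n-r}(\F_p) = p^{\binom{n-r}{2}} \prod_{j=1}^{n-r}(p^j - 1)$ together with the arithmetic identity $\binom{n}{2} - \binom{n-r}{2} = rn - r(r+1)/2$, one obtains $S = (-1)^r p^{rn - r(r+1)/2}\, \#\GL_{n-r}(\F_p)$, and multiplying by the prefactor $p^{(m-1)n^2}$ delivers \eqref{prop:mtx_ramanujanRES2}. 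I expect the main obstacle to be the Möbius step: a naive greedy construction of $\bar Y \in \GL_n(\F_p)$ (adding one row or column at a time) yields a recursion that does not obviously close, whereas the subspace-lattice viewpoint collapses both the $\GL_n$-constraint and the oscillating factor $e_p({\rm tr}(\Lambda_r Y))$ into a single linear condition $W \subseteq \ker \Lambda_r$ and thereby reduces everything to a one-variable $q$-binomial identity.
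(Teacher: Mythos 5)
Your proof is correct. The first half --- the Hensel splitting $Y=Y_1+pY_2$, the vanishing of the inner sum unless $p^{m-1}\mid A$, and the resulting reduction to the mod-$p$ sum $S=\sum_{\bar Y\in\GL_n(\F_p)}e_p(\tr(A'\bar Y))$ with $A'=p^{-(m-1)}A$ --- is exactly the computation underlying the paper's \autoref{KvanishingLEM2}, which the paper applies with $\ell=p^{m-1}$ (after the implicit symmetry $K_n(\vecnull,A;q)=K_n(A,\vecnull;q)$). Where you genuinely diverge is in the evaluation of $S$: the paper at this point simply cites Erd\'elyi--T\'oth \cite[Thm.\ 1.9]{mEaT2021}, whereas you give a self-contained derivation. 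I checked your argument in detail: the reduction to $A'=\Lambda_r=\diag(I_r,0_{n-r})$ via $\bar Y\mapsto U\bar Y V$ is valid; the identity $\sum_{\mathrm{im}(Y)\subseteq W}e_p(\tr(\Lambda_r Y))=p^{n\dim W}\mathbf{1}_{W\subseteq\ker\Lambda_r}$ is correct (parametrize $Y=PC$ with $P$ a basis matrix of $W$ and $C$ free, and note $\Lambda_rP=0\iff W\subseteq\ker\Lambda_r$); the subspace M\"obius function and Gaussian binomial are the standard ones; the exponent identity $\binom{n-k}{2}+nk=\binom{n}{2}+\binom{k+1}{2}$ holds; the $q$-binomial theorem gives $(-1)^{n-r}\prod_{j=1}^{n-r}(p^j-1)$; and the final bookkeeping $\binom{n}{2}-\binom{n-r}{2}=rn-\tfrac{r(r+1)}{2}$ reproduces \eqref{prop:mtx_ramanujanRES2} exactly (sanity checks at $r=0$ and $r=n$, $n=1$ agree with $\#\GL_n(\F_p)$ and the classical Ramanujan sum $-1$). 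What your route buys is a short, fully elementary and reference-free proof of the rank-$r$ formula, replacing the black-box citation by a one-variable $q$-binomial identity; what the paper's route buys is brevity and consistency with its systematic reliance on the Erd\'elyi--T\'oth machinery elsewhere in Section \ref{s:MATRIXKLOOSTERMANsec}.
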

\begin{proof}
Applying \autoref{KvanishingLEM2}
with $\ell=p^{m-1}$,
the first claim,
\eqref{prop:mtx_ramanujanRES1},
follows immediately,
and the second claim,
\eqref{prop:mtx_ramanujanRES2},
is reduced to the case $m=1$.
Now
\eqref{prop:mtx_ramanujanRES2} follows from
\cite[Thm.\ 1.9]{mEaT2021},
since $\bigl|\GL_{n-r}(\F_p)\bigr|=\prod_{i=0}^{n-r-1}(p^{n-r}-p^i)$.
\end{proof}

\begin{corollary}\label{cor:Kn0A_boundCOR}
Let $q$ be a positive integer. 
We have 
\begin{equation}\label{e:Kn0A_bound}
|K_n(\vecnull, A; q)|
\leq \begin{cases} 
q^{n^2} \left(\frac{q}{\gcd(q, A)}\right)^{-n} & \text{ when } \prod_{p\mid q}p^{{\rm ord}_p(q)-1}\mid A, \\
0 & \text{ otherwise. } 
\end{cases} 
\end{equation}
\end{corollary}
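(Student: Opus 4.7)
The plan is to combine the multiplicativity relation of \autoref{multiiplicativityLEM} with the explicit formula of \autoref{prop:mtx_ramanujan}, together with an elementary exponent inequality. First I would reduce to the prime power case by writing $q=\prod_{i=1}^s p_i^{\alpha_i}$ with $\alpha_i=\ord_{p_i}(q)$, choosing $c_i\in(\Z/p_i^{\alpha_i}\Z)^\times$ as in \autoref{multiiplicativityLEM}, and using
\begin{align*}
K_n(\vecnull,A;q)=\prod_{i=1}^s K_n(\vecnull,c_iA;p_i^{\alpha_i}).
\end{align*}
Since each $c_i$ is a unit modulo $p_i^{\alpha_i}$, the condition $p_i^{\alpha_i-1}\mid A$ is equivalent to $p_i^{\alpha_i-1}\mid c_iA$, and moreover $\gcd(p_i^{\alpha_i},c_iA)=\gcd(p_i^{\alpha_i},A)$, so that $\prod_i\gcd(p_i^{\alpha_i},A)=\gcd(q,A)$ by the Chinese Remainder Theorem.

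If $\prod_{p\mid q}p^{\ord_p(q)-1}\nmid A$, then at least one prime $p_i\mid q$ satisfies $p_i^{\alpha_i-1}\nmid A$; \eqref{prop:mtx_ramanujanRES1} makes the corresponding factor vanish, and therefore $K_n(\vecnull,A;q)=0$, as claimed. Otherwise, for every $i$ we have $p_i^{\alpha_i-1}\mid A$, and we may apply \eqref{prop:mtx_ramanujanRES2} factorwise. Thus the task reduces to proving, for each prime power $p^m$ and each $A\in\M_n(\Z/p^m\Z)$ with $p^{m-1}\mid A$, the bound
\begin{align*}
|K_n(\vecnull,A;p^m)|\leq p^{mn^2}\Bigl(\frac{p^m}{\gcd(p^m,A)}\Bigr)^{-n},
\end{align*}
as this multiplies out across $i$ to give the desired estimate $q^{n^2}(q/\gcd(q,A))^{-n}$.

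For the prime power bound I would invoke the explicit formula \eqref{prop:mtx_ramanujanRES2} with $r=\rank(p^{-(m-1)}A\bmod p)\in\{0,1,\ldots,n\}$ and use the trivial estimate
\begin{align*}
\prod_{i=0}^{n-r-1}(p^{n-r}-p^i)\leq p^{(n-r)^2}.
\end{align*}
When $r=0$ we have $A\equiv\vecnull\mod p^m$, so $\gcd(p^m,A)=p^m$, the right-hand side is $p^{mn^2}$, and the bound is immediate (in fact $K_n(\vecnull,\vecnull;p^m)=|\GL_n(\Z/p^m\Z)|\leq p^{mn^2}$). When $r\geq1$ we have $\gcd(p^m,A)=p^{m-1}$, so the desired bound becomes $p^{mn^2-n}$. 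Combining \eqref{prop:mtx_ramanujanRES2} with the trivial product estimate yields
\begin{align*}
|K_n(\vecnull,A;p^m)|\leq p^{mn^2+\frac{r(r-1)}{2}-nr},
\end{align*}
so the whole argument comes down to the elementary inequality
\begin{align*}
\tfrac{r(r-1)}{2}-nr\leq -n\qquad(1\leq r\leq n),
\end{align*}
equivalently $r^2-(2n+1)r+2n\leq 0$, whose roots are $r=1$ and $r=2n$; hence it holds throughout $\{1,\ldots,n\}$.

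I do not anticipate a real obstacle: the ingredients are already in place. The only mildly delicate point is checking that the worst case of the exponent inequality sits at $r=1$ (where equality is attained, so no slack is wasted) and that the $r=0$ case is handled separately because then $\gcd(p^m,A)=p^m$ rather than $p^{m-1}$. Once these two subcases are combined with the multiplicativity step, the statement follows directly.
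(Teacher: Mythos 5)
Your proposal is correct and follows essentially the same route as the paper's proof: reduce to prime powers (via \autoref{multiiplicativityLEM}), apply \autoref{prop:mtx_ramanujan}, bound $\prod_{i=0}^{n-r-1}(p^{n-r}-p^i)$ by $p^{(n-r)^2}$, and verify the elementary exponent inequality $\tfrac{r(r-1)}{2}-nr\leq -n$ for $r\geq1$ together with the observation that $q/\gcd(q,A)$ is the product of the primes with $r_p\geq1$. The only difference is presentational: you make the multiplicativity step and the harmlessness of the units $c_i$ explicit, which the paper leaves implicit.
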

\begin{proof}
Let us write $m_p=\ord_p(q)$.
If $\prod_{p\mid q}p^{m_p-1}\nmid A$ then
$K_n(\vecnull, A; q)=0$, by the 
first part of \autoref{prop:mtx_ramanujan}.
From now on we assume $\prod_{p\mid q}p^{m_p-1}\mid A$.
Letting $r_p$ be the rank of 
the matrix $p^{-(m_p-1)} A$ in ${\rm M}_{n}(\mathbb{F}_p)$,
we have by the second part of 
\autoref{prop:mtx_ramanujan}:
\begin{align}
|K_n(\vecnull, A; q)|
& \leq \prod_{p\mid q}
p^{(m_p-1)n^2} p^{-\frac{r_p(r_p+1)}{2}+r_pn} p^{(n-r_p)^2}
\\ & = \prod_{p\mid q}
p^{m_pn^2+\frac{r_p(r_p-1)}{2}-r_pn}
=q^{n^2} \prod_{p\mid q}p^{r_p\left(\frac{r_p-1}{2}-n\right)}.
\end{align}
One verifies that 
$r_p\bigl(\frac{r_p-1}{2}-n\bigr)\leq -n$ whenever $r_p\geq1$;
furthermore the product of all primes $p\mid q$ satisfying $r_p\geq1$
equals $\frac q{\gcd(q,A)}$;
hence the inequality in \eqref{e:Kn0A_bound}
follows.
\end{proof}

\section{Geometry of numbers} \label{sec:GoN}

Let us fix integers $1\leq n<d$.
For any real numbers 
$\kappa>nd$ and $a,b>0$, and any $g\in {\rm SL}_d(\mathbb{R})$,
we define
\begin{equation}\label{e:Phiell_def}
\Phi^{(\kappa)}_{a,b}(g) = \sum_{\substack{X\in {\rm M}_{n\times d}(\mathbb{Z})\\ X\neq\vecnull}} 
\frac 1{a+b\|X g\|_\infty^{\kappa}},
\end{equation}
Note that the condition $\kappa>nd$ ensures that the series on the right-hand side
converges (see also \autoref{LATTICECOUNTBOUNDlem2} below).
Furthermore, $\Phi^{(\kappa)}_{a,b}$ is (left) $\SL_d(\Z)$-invariant;
indeed, for any $\gamma\in {\rm SL}_d(\mathbb{Z})$
we have
\begin{equation}
\Phi^{(\kappa)}_{a,b}(\gamma g) = \sum_{\substack{X\in {\rm M}_{n\times d}(\mathbb{Z})\\ X\neq\vecnull}} 
\frac1{a+b \|X\gamma g\|_\infty^\kappa}
= \Phi^{(\kappa)}_{a,b}(g).
\end{equation}

Our goal in the present section is to prove a bound on 
the integral
of $\Phi^{(\kappa)}_{a,b}$
over $\SL_d(\Z)\bs\SL_d(\R)$;
see \autoref{L1boundprop} below.
This bound will play an important role in our proof of the main theorem
in Section \ref{MAINTHMPFsec}.
We start by proving, in \autoref{LATTICECOUNTBOUNDlem2} below,
a pointwise bound on $\Phi^{(\kappa)}_{a,b}$.

For a lattice $L$ in $\R^d$ 
we write $\lambda_i=\lambda_i(L)$ ($i=1,\ldots,d$)
for its successive minima with respect to the unit ball,
i.e.,
\begin{align}\label{e:lambdai_def}
\lambda_i(L):=\min\bigl\{\lambda\in\R_{\geq0}\col L\text{ contains $i$ linearly independent vectors of length $\leq\lambda$}\bigr\}.
\end{align}
Thus $0<\lambda_1\leq\lambda_2\leq\cdots\leq\lambda_d$.
Let $\mathcal{B}_R^d \subset \mathbb{R}^d$ be the ball of radius $R$ with centre at the origin in $\mathbb{R}^d$.
\begin{lemma}\label{LATTICECOUNTBOUNDlem1}
For every lattice $L$ in $\R^d$ and every $R>0$,
\begin{align*}
\#(L\cap\scrB_R^d)\asymp_d\prod_{i=1}^d\Bigl(1+\frac R{\lambda_i(L)}\Bigr).
\end{align*}
\end{lemma}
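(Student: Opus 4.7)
The plan is to prove both the $\ll_d$ and $\gg_d$ halves by exhibiting and working with a basis of $L$ adapted to the successive minima.

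For the lower bound, I would pick $\vecv_1,\ldots,\vecv_d \in L$ linearly independent with $\|\vecv_i\|=\lambda_i$, which exist by the very definition of $\lambda_i$ in \eqref{e:lambdai_def}. For any tuple of integers $(a_1,\ldots,a_d)$ with $|a_i| \leq R/(d\lambda_i)$, the triangle inequality gives $\|\sum_i a_i \vecv_i\| \leq \sum_i |a_i|\lambda_i \leq R$. Linear independence ensures the resulting lattice points are distinct, producing at least
\begin{align*}
\prod_{i=1}^d \Bigl(2\lfloor R/(d\lambda_i)\rfloor+1\Bigr) \;\gg_d\; \prod_{i=1}^d \Bigl(1+\tfrac{R}{\lambda_i}\Bigr)
\end{align*}
lattice points in $\scrB_R^d$.

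For the upper bound, I would invoke Mahler's lemma to obtain an honest basis $\vecw_1,\ldots,\vecw_d$ of $L$ with $\|\vecw_i\| \asymp_d \lambda_i$. Applying Gram--Schmidt to this basis produces an orthogonal tuple $\vecw_1^*,\ldots,\vecw_d^*$ with $\|\vecw_i^*\| \leq \|\vecw_i\| \ll_d \lambda_i$ and $\prod_i \|\vecw_i^*\| = \mathrm{covol}(L)$. Minkowski's second theorem gives $\mathrm{covol}(L) \asymp_d \prod_i \lambda_i$, so in fact $\|\vecw_i^*\| \asymp_d \lambda_i$ for every $i$. Now for any $\vecx = \sum_i b_i \vecw_i \in L \cap \scrB_R^d$, since $\vecw_d^*$ is orthogonal to the span of $\vecw_1,\ldots,\vecw_{d-1}$ the projection of $\vecx$ onto $\vecw_d^*$ equals $b_d\vecw_d^*$; hence $|b_d|\,\|\vecw_d^*\| \leq \|\vecx\| \leq R$, so $|b_d| \ll_d R/\lambda_d$. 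Subtracting $b_d\vecw_d$ from $\vecx$ leaves a vector of norm $\ll_d R$ in the span of $\vecw_1,\ldots,\vecw_{d-1}$; iterating the same projection argument downward bounds $|b_i| \ll_d R/\lambda_i$ for each $i$, yielding
\begin{align*}
\#(L\cap\scrB_R^d) \;\ll_d\; \prod_{i=1}^d \Bigl(1+\tfrac{R}{\lambda_i}\Bigr).
\end{align*}

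The main obstacle is the input about the basis: in dimensions $d \geq 5$ there exist lattices for which no basis achieves $\|\vecw_i\| = \lambda_i$ exactly, so one genuinely needs the dimension-dependent constants supplied by Mahler's lemma. With that tool in hand, the Gram--Schmidt/projection step and the geometric-series counting are both routine.
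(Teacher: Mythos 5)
Your argument is correct. Note, however, that the paper does not actually prove this lemma: its ``proof'' is a one-line citation to Proposition 6 of Gillet--Soul\'e (the reference \cite{hGcS91}), together with a footnote correcting typos in that reference. So there is no internal argument to compare against; what you have supplied is the standard self-contained proof that the cited source also follows in spirit. Both halves of your argument check out: in the lower bound, the case distinction $R/\lambda_i < d$ versus $R/\lambda_i \ge d$ shows each factor $2\lfloor R/(d\lambda_i)\rfloor+1$ is indeed $\gg_d 1+R/\lambda_i$; in the upper bound, the chain ``Mahler/Weyl basis with $\|\vecw_i\|\asymp_d\lambda_i$, Gram--Schmidt plus Minkowski's second theorem to get $\|\vecw_i^*\|\asymp_d\lambda_i$, then peeling off coefficients from the top via orthogonal projection'' is the classical route, and your observation that one must accept dimension-dependent constants in the basis (since for $d\ge5$ no basis need realize the minima exactly) is exactly the right caveat. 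The only point worth making explicit is that in the iteration the admissible radius grows by a bounded factor at each of the $d$ steps (from $R$ to $O_d(R)$), which is harmless but should be said; you do gesture at this with ``norm $\ll_d R$''. A self-contained proof along your lines would be a reasonable substitute for the external citation.
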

\begin{proof}
See \cite[Proposition 6]{hGcS91}.\footnote{As noted in the erratum of
\cite{hGcS91}, in the statement of \cite[Proposition 6]{hGcS91},
``$\lambda_1\cdots\lambda_k/M(K)$'' should read
``$\lambda_1\cdots\lambda_k M(K)$'', and in the last line of the proof, ``$\sim V(K_0)$'' should be corrected to ``$\sim V(K_0)^{-1}$''.}
\end{proof}

\begin{lemma}\label{LATTICECOUNTBOUNDlem2}
For any $\kappa>nd$, $a,b>0$ and $g\in\SL_d(\R)$,
writing $\lambda_i:=\lambda_i(\Z^dg)$ for $i=1,\ldots,d$, 
we have
\begin{align}\label{LATTICECOUNTBOUNDlem2res}
\Phi^{(\kappa)}_{a,b}(g)\ll\begin{cases}
b^{-1}\lambda_1^{-\kappa}
&\text{if }\:\lambda_1\geq (a/b)^{1/\kappa}
\\[5pt]
{\displaystyle a^{-1}\prod_{i=1}^d\Bigl(1+\frac{(a/b)^{1/\kappa}}{\lambda_i}\Bigr)^n}
&\text{if }\:\lambda_1\leq (a/b)^{1/\kappa},
\end{cases}
\end{align}
where the implied constant depends only on $n,d$ and $\kappa$.
\end{lemma}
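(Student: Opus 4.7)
\medskip
\noindent\textbf{Proof proposal.}
Writing $X \in \M_{n\times d}(\Z)$ as an $n$-tuple of row vectors $x_1,\ldots,x_n \in \Z^d$, we have
\[
\|Xg\|_\infty = \max_{1\leq i\leq n}\|x_i g\|_\infty,
\]
so that, for any $T\geq 0$,
\[
\#\bigl\{X\in\M_{n\times d}(\Z) : \|Xg\|_\infty \leq T\bigr\}
= \bigl(\#\bigl\{v\in\Z^d g : \|v\|_\infty \leq T\bigr\}\bigr)^n.
\]
The plan is to count the latter using \autoref{LATTICECOUNTBOUNDlem1}, after noting that the sup-norm cube $\{\|v\|_\infty\leq T\}$ is sandwiched between the Euclidean balls $\scrB_T^d$ and $\scrB_{T\sqrt d}^d$, which gives
\begin{equation}\label{countBD}
\#\bigl\{X\in\M_{n\times d}(\Z) : \|Xg\|_\infty \leq T\bigr\}
\ll_d \prod_{i=1}^d \Bigl(1+\tfrac{T}{\lambda_i}\Bigr)^{\!n}.
\end{equation}
I would also record the standard fact $\|xg\|_\infty \geq d^{-1/2}\|xg\|_2 \geq d^{-1/2}\lambda_1$ for every nonzero $x\in\Z^d$, so that $\|Xg\|_\infty \geq d^{-1/2}\lambda_1$ for every nonzero $X$.

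Set $R:=(a/b)^{1/\kappa}$. In the first case $\lambda_1\geq R$, every nonzero term of \eqref{e:Phiell_def} satisfies
$b\|Xg\|_\infty^\kappa \geq d^{-\kappa/2}b\lambda_1^\kappa \geq d^{-\kappa/2}a$, so the denominator is comparable to $b\|Xg\|_\infty^\kappa$. I would then partition $\M_{n\times d}(\Z)\setminus\{\vecnull\}$ by dyadic shells $d^{-1/2}\lambda_1\cdot 2^{j-1} < \|Xg\|_\infty \leq d^{-1/2}\lambda_1\cdot 2^j$ ($j\geq 1$). Using \eqref{countBD} together with $\lambda_i\geq\lambda_1$, the $j$-th shell contains $\ll 2^{jdn}$ elements, each contributing $\ll b^{-1}\lambda_1^{-\kappa}2^{-j\kappa}$. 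Summing a geometric series (convergent since $\kappa>nd$) gives the first line of \eqref{LATTICECOUNTBOUNDlem2res}.

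In the second case $\lambda_1<R$, I would split into a bulk region $\|Xg\|_\infty\leq R$ and dyadic tails $R\cdot 2^{j-1}<\|Xg\|_\infty\leq R\cdot 2^j$ ($j\geq 1$). On the bulk, each term is $\leq a^{-1}$ and by \eqref{countBD} there are at most $\ll\prod_i(1+R/\lambda_i)^n$ such terms. On the $j$-th tail the denominator is $\geq b(R 2^{j-1})^\kappa \asymp a\cdot 2^{j\kappa}$, and by \eqref{countBD} together with the elementary bound $1+R 2^j/\lambda_i \leq 2^j(1+R/\lambda_i)$ the shell contains $\ll 2^{jdn}\prod_i(1+R/\lambda_i)^n$ elements. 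Summing over $j\geq 1$ (again using $\kappa>nd$) yields a bound of the same order as the bulk contribution, producing the second line of \eqref{LATTICECOUNTBOUNDlem2res}.

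There is no serious obstacle; the main care is to keep the implied constants uniform in $a,b,g$ (depending only on $n,d,\kappa$) and to verify that the geometric series converge, which is exactly what the hypothesis $\kappa>nd$ provides. The lattice-point count \eqref{countBD} is the sole non-trivial input and follows immediately from \autoref{LATTICECOUNTBOUNDlem1} after the sandwiching of cubes by Euclidean balls.
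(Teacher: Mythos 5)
Your proof is correct and follows essentially the same route as the paper's: a dyadic decomposition in $\|Xg\|_\infty$ anchored at $\lambda_1/\sqrt d$, combined with the lattice-point count of \autoref{LATTICECOUNTBOUNDlem1} and the convergence supplied by $\kappa>nd$. The paper merely organizes the two cases in a unified way (normalizing to $b=1$ and locating the peak of the resulting geometric-type series at $\lambda_1 2^{m_0}=a^{1/\kappa}$) rather than splitting into bulk and tail regions up front, but the inputs and estimates are identical.
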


\begin{proof}
As we will see,
the lemma follows from the definition \eqref{e:Phiell_def}
and \autoref{LATTICECOUNTBOUNDlem1}
by a simple computation using dyadic decomposition.
Note that for any $r>0$,
both sides in \eqref{LATTICECOUNTBOUNDlem2res}
are scaled by a factor $r^{-1}$ when replacing $\langle a,b\rangle$ by $\langle ra,rb\rangle$;
hence we may without loss of generality assume $b=1$.
Now set $c:=\lambda_1/\sqrt d$ and 
\begin{align*}
N_m:=\#\bigl\{X\in \M_{n\times d}(\Z)\col c\,2^{m-1}\leq \|Xg\|_{\infty}<c\,2^m\bigr\}
\qquad(m\in\Z).
\end{align*}
Then $N_m=0$ for all $m\leq0$, since every non-zero vector $\vecv\in\Z^dg$
satisfies $\|\vecv\|_\infty\geq \|\vecv\|/\sqrt d\geq c$, for $\|\vecv\|=\sqrt{\trans \vecv \vecv}$.  
Hence
\begin{align}\notag
\Phi^{(\kappa)}_{a,1}(g)
= \sum_{\substack{X\in {\rm M}_{d\times n}(\mathbb{Z})\\ X\neq\vecnull}} 
\frac1{a+ \|Xg\|_\infty^{\kappa}}
\ll_{\kappa}
\sum_{m=1}^\infty \frac{N_m}{a+(c\,2^m)^{\kappa}}
\ll_{{\kappa},d}
\sum_{m=1}^\infty \frac{N_m}{a+(\lambda_1\,2^m)^{\kappa}}.
\end{align}
Here 
\begin{align}\label{tNmdef}
N_m 
\leq \left(\#(\mathbb{Z}^d g\cap \mathcal{B}_{c 2^m}^d)\right)^n 
\ll_d \tN_m:=\prod_{i=1}^d\Bigl(1+\frac{\lambda_1 2^m}{\lambda_i}\Bigr)^n,
\end{align}
by \autoref{LATTICECOUNTBOUNDlem1} (and since $c\ll_d\lambda_1$),
and also
\begin{align}
\frac1{a+(\lambda_1\,2^m)^{\kappa}}< A_m:=\min\bigl(a^{-1},(\lambda_1\,2^m)^{-{\kappa}}\bigr),
\end{align}
and so
\begin{align}
\Phi^{(\kappa)}_{a,1}(g)\ll_{{\kappa},d}\sum_{m=1}^\infty \tN_m A_m.
\end{align}
Here we note that the sequence $\tN_1,\tN_2,\ldots$ is increasing
and satisfies $(\frac53)^n\tN_m\leq\tN_{m+1}\leq 2^{nd}\tN_m$ for all $m\geq1$
(where the first inequality comes from behavior of the factor corresponding to $i=1$ in \eqref{tNmdef}).
Letting $m_0$ be the unique real number satisfying 
$\lambda_12^{m_0}=a^{1/{\kappa}}$,
it follows that the sequence 
$\tN_mA_m$ is geometrically increasing 
with a ratio $\geq(\frac53)^n$ for $m\leq m_0$
and geometrically decreasing with the ratio $2^{nd-{\kappa}}$ for $m\geq m_0$.
Hence if $m_0\leq0$
(viz., if $\lambda_1\geq a^{1/{\kappa}}$)
then $\Phi^{(\kappa)}_{a,1}(g)\ll_{{\kappa},d,n}\tN_1A_1\ll_{d,n}\lambda_1^{-{\kappa}}$,
while if $m_0\geq0$ then
\begin{align}\notag
\Phi^{(\kappa)}_{a,1}(g)\ll_{{\kappa},d,n}\tN_{\lfloor m_0\rfloor}A_{\lfloor m_0\rfloor}+
\tN_{\lceil m_0\rceil}A_{\lceil m_0\rceil}
\leq a^{-1}\bigl(\tN_{\lfloor m_0\rfloor}+\tN_{\lceil m_0\rceil}\bigr)
\ll_{n,d} a^{-1}\tN_{m_0}
\hspace{60pt}
\\
=a^{-1}\prod_{i=1}^d\Bigl(1+\frac{a^{1/{\kappa}}}{\lambda_i}\Bigr)^n.
\end{align}\notag
\end{proof}

We will also make use of Rogers' formula,
\cite[Theorem 4]{cR55},
which can be stated as follows
(see \cite[Theorem 1.5 and Sec.\ 2]{LLdual}).
Recall from Section \ref{ss:Heckepts} that $\mu_0$ denotes the invariant probability measure on $\SL_d(\Z)\bs\SL_d(\R)$.
\begin{theorem}\label{lem:Rogers}
For any $1\leq n<d$, and for any Borel measurable function
$\rho:\M_{n\times d}(\R)\to\R_{\geq0}$ we have
\begin{align}\label{L1boundproppf1}
\int_{{\rm SL}_d(\mathbb{Z})\backslash {\rm SL}_d(\mathbb{R})}
\sum_{\substack{X\in {\rm M}_{n\times d}(\mathbb{Z})\\ X\neq\vecnull}} \rho(Xg)\,d\mu_0(g)
=\sum_{m=1}^n\sum_{B\in A_{n,m}}\int_{\M_{m\times d}(\R)}\rho(BX)\,dX,
\end{align}
where for each $m\in\{1,\ldots,n\}$,
$A_{n,m}$ is a subset of $\M_{n\times m}(\Z)$
such that the map $B\mapsto B\,\R^m$
is a bijection from $A_{n,m}$ onto the family of 
rational $m$-dimensional subspaces of $\R^n$, \footnote{Recall that a linear subspace $V\subset\R^n$
is said to be rational if $V=\Span_{\R}(V\cap\Z^n)$.}
and $(B\,\R^m)\cap\Z^n=B\,\Z^m$ for each $B\in A_{n,m}$;
furthermore, $dX$ is the standard $md$-dimensional Lebesgue measure on $\M_{m\times d}(\R)$.
\end{theorem}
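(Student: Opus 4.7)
The plan is to derive this identity directly from Rogers' classical 1955 integration formula and its reformulation in \cite{LLdual}; the work of the proof consists in decomposing the left-hand side by matrix rank and matching the result against the stated form.

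First I would split the outer sum over $X\in\M_{n\times d}(\Z)\setminus\{\vecnull\}$ according to $m:=\rank X\in\{1,\ldots,n\}$. For fixed $m$, I would classify the rank-$m$ integer matrices $X$ by their column span $V:=X\R^d\subseteq\R^n$. Since $X$ has integer entries, $V$ is a rational $m$-dimensional subspace, and hence there is a unique $B\in A_{n,m}$ with $V=B\R^m$. The assumption $B\R^m\cap\Z^n=B\Z^m$ then implies that every column of $X$ lies in $B\Z^m$, yielding a unique factorization $X=BY$ with $Y\in\M_{m\times d}(\Z)$; the rank hypothesis on $X$ forces $\rank Y=m$, because $B$ has rank $m$. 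Conversely, any pair $(B,Y)$ with $B\in A_{n,m}$ and $Y$ a rank-$m$ integer matrix produces $X=BY$ of rank $m$ with column span $B\R^m$, so the correspondence is a bijection.

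After this stratification, the left-hand side becomes
\begin{align*}
\sum_{m=1}^n\sum_{B\in A_{n,m}}\int_{\SL_d(\Z)\bs\SL_d(\R)}\sum_{\substack{Y\in\M_{m\times d}(\Z)\\ \rank Y=m}}\rho(BYg)\,d\mu_0(g).
\end{align*}
The inner sum-integral has the form to which Rogers' mean value identity applies: for any non-negative Borel function $h$ on $\M_{m\times d}(\R)$ with $1\leq m<d$, one has
\begin{align*}
\int_{\SL_d(\Z)\bs\SL_d(\R)}\sum_{\substack{Y\in\M_{m\times d}(\Z)\\ \rank Y=m}}h(Yg)\,d\mu_0(g)=\int_{\M_{m\times d}(\R)}h(Y)\,dY.
\end{align*}
Applying this with $h(Y)=\rho(BY)$ and summing over $m$ and $B$ produces the right-hand side of the theorem.

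The main obstacle, were one to prove this from scratch rather than invoke the literature, lies precisely in the rank-$m$ mean value identity above. Rogers' original statement in \cite{cR55} involves sums over ordered $m$-tuples of linearly independent integer vectors, and the passage to arbitrary rank-$m$ integer matrices requires unfolding a $\GL_m(\Z)$-action, which exactly absorbs the product of zeta values that appears in the Siegel-type version for primitive $m$-frames. Because this identity is quoted from \cite[Thm.\ 4]{cR55} (see \cite[Thm.\ 1.5 and Sec.\ 2]{LLdual}), the proof reduces to the rank-stratification and parametrization described above, and no further calculation is required.
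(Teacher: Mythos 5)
Your argument is correct, and it matches how the paper treats this statement: the paper gives no proof of \autoref{lem:Rogers} at all, but simply quotes it as Rogers' formula \cite[Theorem 4]{cR55} in the reformulation of \cite[Theorem 1.5 and Sec.\ 2]{LLdual}. Your stratification of the nonzero integer matrices $X$ by $m=\rank X$ and by the rational column span $V=X\R^d\subset\R^n$, together with the unique factorization $X=BY$ with $B\in A_{n,m}$ and $Y\in\M_{m\times d}(\Z)$ of rank $m$ (valid precisely because $(B\R^m)\cap\Z^n=B\Z^m$), is exactly the standard reduction of the stated identity to the classical mean value theorem for linearly independent $m$-tuples of lattice vectors with $m<d$; since that last identity is itself being quoted from the literature, your proposal carries the same logical content as the paper's citation, just with the bookkeeping made explicit.
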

In the above theorem, 
note that \eqref{L1boundproppf1} should be understood as
an identity between extended real numbers,
i.e.\ either both sides of the equality sign are finite and equal, or else both sides are $+\infty$.

\vspace{5pt}

Finally we are now ready to prove the main result of the present section.
\begin{proposition}\label{L1boundprop}
For any $\kappa>nd$ and $a,b>0$ we have
\begin{align}\label{L1boundpropres}
\int_{{\rm SL}_d(\mathbb{Z})\backslash {\rm SL}_d(\mathbb{R})} \Phi^{(\kappa)}_{a,b}(g) \, d\mu_0(g)\ll 
a^{-1}\Bigl(\frac ab\Bigr)^{\!\frac d{\kappa}}\Bigl(1+\frac ab\Bigr)^{(n-1)\frac d{\kappa}},
\end{align}
where the implied constant depends only on $n,d$ and $\kappa$.
\end{proposition}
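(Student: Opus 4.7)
The plan is to apply Rogers' integration formula (\autoref{lem:Rogers}) with $\rho(X) := (a+b\|X\|_\infty^\kappa)^{-1}$, which immediately rewrites the integral as
\begin{equation*}
\int_{\SL_d(\Z)\bs\SL_d(\R)} \Phi^{(\kappa)}_{a,b}(g)\,d\mu_0(g) = \sum_{m=1}^n\sum_{B\in A_{n,m}}\int_{\M_{m\times d}(\R)}\frac{dX}{a+b\|BX\|_\infty^\kappa}.
\end{equation*}
The substitution $X = (a/b)^{1/\kappa}Y$ then extracts all the $(a,b)$-dependence and reduces matters to bounding, for each $m\in\{1,\dots,n\}$, the quantity $\sum_{B\in A_{n,m}} J(B)$, where $J(B) := \int_{\M_{m\times d}(\R)} (1+\|BY\|_\infty^\kappa)^{-1}\,dY$.

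For a fixed $B$ of rank $m$, the map $\vecy \mapsto \|B\vecy\|_\infty$ defines a norm on $\R^m$; let $V_B$ denote the volume of its unit ball. Writing $Y$ by its columns $\vecy_1,\dots,\vecy_d$ and noting that $\|BY\|_\infty = \max_j \|B\vecy_j\|_\infty$, I would split $J(B)$ according to which column attains the maximum and then integrate in polar coordinates adapted to this norm. A direct calculation then gives $J(B) = md\,V_B^d\int_0^\infty T^{md-1}(1+T^\kappa)^{-1}\,dT \ll_{n,d,\kappa} V_B^d$, where convergence of the $T$-integral relies on $\kappa > nd \geq md$.

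The step I expect to be the main obstacle is showing $\sum_{B\in A_{n,m}} V_B^d \ll_{n,d} 1$ for each $m$. This is trivial for $m=n$, where $A_{n,n}=\{I_n\}$ and $V_{I_n}=2^n$. For $1\leq m<n$ the value $V_B$ is invariant under $B\mapsto BU$ with $U\in\GL_m(\Z)$, hence depends only on the primitive sublattice $\Lambda := B\Z^m \subset \Z^n$. Restricting to any $m$ linearly independent rows of $B$ gives $V_B \leq 2^m/|\det B_I|$, and since the maximal $m\times m$ minor of $B$ is comparable (with $n$-dependent constants) to the Euclidean covolume $d(\Lambda)$, one obtains $V_B^d \ll_{n,d} d(\Lambda)^{-d}$. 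The sum $\sum_\Lambda d(\Lambda)^{-d}$ over primitive rank-$m$ sublattices of $\Z^n$ then converges because the number of such $\Lambda$ with $d(\Lambda)\leq X$ grows polynomially of degree $n$, while $d > n$ by the standing hypothesis $1\leq n<d$.

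Combining these ingredients would yield
\begin{equation*}
\int_{\SL_d(\Z)\bs\SL_d(\R)} \Phi^{(\kappa)}_{a,b}(g)\,d\mu_0(g) \ll_{n,d,\kappa} a^{-1}\sum_{m=1}^n (a/b)^{md/\kappa}.
\end{equation*}
Treating the regimes $a/b\leq 1$ and $a/b\geq 1$ separately (in the first, the $m=1$ term dominates; in the second, the $m=n$ term does), the right-hand side is bounded by $a^{-1}(a/b)^{d/\kappa}(1+a/b)^{(n-1)d/\kappa}$ up to a constant depending only on $n,d,\kappa$, completing the proof.
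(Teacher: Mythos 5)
Your proposal is correct, and its skeleton (Rogers' formula with $\rho(X)=(a+b\|X\|_\infty^\kappa)^{-1}$, followed by the rescaling $X=(a/b)^{1/\kappa}Y$ to peel off all the $(a,b)$-dependence, followed by the elementary comparison of $\sum_{m=1}^n(a/b)^{md/\kappa}$ with the stated right-hand side) coincides with the paper's. Where you diverge is in how the $(a,b)$-independent constants $\sum_{B\in A_{n,m}}\int(1+\|BY\|_\infty^\kappa)^{-1}\,dY$ are shown to be finite. The paper argues indirectly: it bounds $\Phi^{(\kappa)}_{1,1}(g)\ll\#(\Z^dg\cap\scrB_1^d)^n$ pointwise via the successive-minima estimates of \autoref{LATTICECOUNTBOUNDlem1} and \autoref{LATTICECOUNTBOUNDlem2}, invokes Schmidt's theorem that the $n$th moment of $\#(\Z^dg\cap\scrB_1^d)$ is $\mu_0$-integrable, and then reads off finiteness of each $J_{1,1}(m)$ from Rogers' identity itself, since all terms are nonnegative. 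You instead evaluate each integral explicitly, getting $J(B)\asymp_{m,d,\kappa}V_B^d$ with $V_B$ the volume of the unit ball of $\vecy\mapsto\|B\vecy\|_\infty$ (correct, and the convergence condition $\kappa>md$ is exactly what $\kappa>nd$ supplies), bound $V_B\ll_n d(\Lambda)^{-1}$ via Cauchy--Binet, and then sum over primitive rank-$m$ sublattices $\Lambda\subset\Z^n$ using the fact that the number of such $\Lambda$ with $d(\Lambda)\leq X$ is $O(X^n)$, so that $\sum_\Lambda d(\Lambda)^{-d}<\infty$ precisely because $d>n$. That counting input is the one genuinely nontrivial external fact in your argument; it is a theorem of Schmidt on subspaces of bounded height (cruder bounds, e.g.\ of degree $nm$, would not suffice when $d=n+1$), and you should cite it rather than assert it. Amusingly, both routes thus rest on a theorem of Schmidt, just different ones; your version is more explicit and makes visible exactly where the hypothesis $n<d$ enters, while the paper's is shorter because it recycles the pointwise machinery it needs elsewhere anyway.
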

In particular the proposition implies that the integral on the left-hand side of \eqref{L1boundpropres} is finite.

\begin{proof}
We apply \autoref{lem:Rogers} with the following choice of $\rho=\rho_{a,b}:\M_{n\times d}(\R)\to\R_{\geq0}$:
\begin{align*}
\rho_{a,b}(Y):=\frac1{a+b\|Y\|_\infty^{\kappa}}.
\end{align*}
With this choice, 
\eqref{L1boundproppf1} says that
\begin{align}
\int_{{\rm SL}_d(\mathbb{Z})\backslash {\rm SL}_d(\mathbb{R})} \Phi^{(\kappa)}_{a,b}(g) \, d\mu_0(g)
=\sum_{m=1}^n J_{a,b}(m),
\end{align}
where
\begin{align*}
J_{a,b}(m):=\sum_{B\in A_{n,m}}\int_{\M_{m\times d}(\R)}\rho_{a,b}(BX)\,dX.
\end{align*}
Note that for any $c>0$, by substituting $X=c^{1/{\kappa}}X_{\new}$ we have
\begin{align}
J_{a,b}(m)=c^{md/{\kappa}} J_{a,bc}(m).
\end{align}
Furthermore, we have the trivial scaling property
\begin{align}
J_{a,b}(m)=cJ_{ac,bc}(m).
\end{align}
Combining these we get:
\begin{align}\label{L1boundproppf2}
J_{a,b}(m)=\Bigl(\frac ab\Bigr)^{md/{\kappa}}J_{a,a}(m)
=a^{-1}\Bigl(\frac ab\Bigr)^{md/{\kappa}}J_{1,1}(m).
\end{align}

Let us now also note that,
by \autoref{LATTICECOUNTBOUNDlem1} and \autoref{LATTICECOUNTBOUNDlem2},
\begin{align*}
\Phi^{(\kappa)}_{1,1}(g)\ll_{n,d,{\kappa}}\#(\Z^dg\cap\scrB_1^d)^n,\qquad\forall g\in\SL_d(\R).
\end{align*}
Furthermore, by Schmidt \cite[Theorem 2]{wS58}
we have
\begin{align*}
\int_{{\rm SL}_d(\mathbb{Z})\backslash {\rm SL}_d(\mathbb{R})} \#(\Z^dg\cap\scrB_1^d)^n\, d\mu_0(g)<\infty.
\end{align*}
Hence 
$\int_{\SL_d(\Z)\bs\SL_d(\R)}\Phi^{(\kappa)}_{1,1}(g)\,d\mu_0(g)<\infty$,
and thus $J_{1,1}(m)<\infty$ for each $m\in\{1,\ldots,n\}$.
Hence %
\begin{align*}
\int_{{\rm SL}_d(\mathbb{Z})\backslash {\rm SL}_d(\mathbb{R})} \Phi^{(\kappa)}_{a,b}(g) \, d\mu_0(g)
=\sum_{m=1}^n J_{a,b}(m)
=\sum_{m=1}^n a^{-1}\Bigl(\frac ab\Bigr)^{md/{\kappa}}J_{1,1}(m)
&\ll_{n,d,{\kappa}}\sum_{m=1}^n a^{-1}\Bigl(\frac ab\Bigr)^{m\frac d{\kappa}}
\\
&\ll_{n} a^{-1}\max\Bigl(\Bigl(\frac ab\Bigr)^{\frac d{\kappa}},\Bigl(\frac ab\Bigr)^{n\frac d{\kappa}}\Bigr),
\end{align*}
viz., the bound in \eqref{L1boundpropres} holds.
\end{proof}

\section{Proof of the main theorem}
\label{MAINTHMPFsec}

In this section we give the proof of \autoref{MAINTHM}.
The proof is split into the two cases $n=d$ and $n<d$.
The first of these is treated in Section \ref{pfcaseneqd}:
as we will see, the proof in this case is a fairly easy
consequence of the bounds on the matrix Kloosterman sums proved in
Section \ref{s:MATRIXKLOOSTERMANsec}.
The proof in the case $n<d$ is carried out in Sections \ref{casenledSEC}--\ref{E2qfSEC};
the proof depends crucially on the bounds in
Section \ref{s:MATRIXKLOOSTERMANsec} in this case as well, but we additionally need to invoke
Hecke equidistribution and methods from geometry of numbers.

We stress that throughout the present section,
the implied constant in any ``$\ll$'' 
may depend on $d$ (thus may also depend on $n$),
without this being explicitly indicated in the notation.

\subsection{The case $n=d$}
\label{pfcaseneqd}
In this case we have $\kappa=2n^2$ and $k=2n^2+1$
in the statement of \autoref{MAINTHM}.
Furthermore, $\scrR_q=\GL_n(\Z/q\Z)$ and $\scrB_q=\{I_n\}$
(see Section \ref{RqSEC});
hence by \autoref{lem:mtx_relation}, 
for any $f\in {\rm C}_b^k({\rm M}_{n}(\mathbb{R}/\Z)\times \Gamma \backslash \Gamma {\rm H})$
and $q\in\Z^+$, we have
\begin{multline}\label{caseneqdpf1}
\mathcal{A}_q(f) = \frac{1}{\#\mathcal{R}_q} \sum_{R\in \mathcal{R}_q} f\left(q^{-1}R, \tn_+(q^{-1}R) D(q)\right)
\\ = \frac{1}{\#{\rm GL}_{n}(\mathbb{Z}/q\mathbb{Z})} 
\sum_{R\in \GL_n(\Z/q\Z)}
f\left(q^{-1} R, \tn_-(q^{-1}R^{-1}) \right). 
\end{multline}
Note also that ${\rm H}\cong {\rm M}_{n}(\mathbb{R})$
and $\Gamma\backslash \Gamma {\rm H}\cong {\rm M}_{n}(\mathbb{R}/\Z)$
in the present case;
hence we have the following Fourier expansion, for $X_1, X_2\in {\rm M}_{n}(\mathbb{R}/\Z)$:
\begin{equation}\label{caseneqdpf2}
f(X_1, \tn_-(X_2)) = \sum_{N,M\in {\rm M}_{n}(\mathbb{Z})} \widehat{f}(N, M) e^{2\pi i {\rm tr}(\trans N X_1)}e^{2\pi i {\rm tr}(\trans M X_2)}, 
\end{equation}
where 
\begin{equation}
\widehat{f}(N, M) = \int_{{\rm M}_{n}(\mathbb{R}/\Z)} \int_{{\rm M}_{n}(\mathbb{R}/\Z)} f(T_1, \tn_-(T_2)) e^{-2\pi i {\rm tr}(\trans N T_1)} e^{-2\pi i {\rm tr}(\trans M T_2)} \, dT_1 \, dT_2. 
\end{equation}
The sum in \eqref{caseneqdpf2} is absolutely convergent, uniformly with respect to $X_1,X_2$,
since $f\in{\rm C}_b^k$ with $k=2n^2+1>n^2$
\cite[Theorem 3.2.16]{Grafakos}.

By applying integration by parts
in a similar way as in \autoref{lem:Fouriercoeff_bound}, 
we have, for any $0\leq\lambda\leq k$ and 
$N,M\in\M_{n\times n}(\Z)$,
\begin{equation}\label{e:Fourier_bound_n=d}
\bigl|\widehat{f}(N, M)\bigr| 
\ll\min\biggl(\frac{S_{\infty,\lambda}(f)}{1+\|N\|_\infty^\lambda},
\frac{S_{\infty,\lambda}(f)}{1+\|M\|_\infty^\lambda}\biggr)
\ll \frac{S_{\infty,\lambda}(f)}{1+\|N\|_\infty^\lambda+\|M\|_\infty^\lambda}.
\end{equation}

Substituting \eqref{caseneqdpf2} into \eqref{caseneqdpf1},
and then using the definition of the 
matrix Kloosterman sum $K_n(A,B;q)$ in \eqref{Kndefrep}
and the basic identity $K_n(\trans A,\trans B;q)=K_n(A,B;q)$,
we obtain
\begin{multline}\label{caseneqdpf10}
\mathcal{A}_q(f) 
= \widehat{f}(\vecnull, \vecnull) 
+ \frac{1}{\#{\rm GL}_{n}(\mathbb{Z}/q\mathbb{Z})} 
\sum_{\substack{N, M\in {\rm M}_{n}(\mathbb{Z})\\ N\neq \vecnull \text{ or } M\neq \vecnull}} 
\widehat{f}(N, M)
\sum_{R\in {\rm GL}_n(\mathbb{Z}/q\mathbb{Z})}
e^{2\pi i \frac{{\rm tr}(\trans N R)+{\rm tr}(\trans M R^{-1})}{q}}
\\ = 
\int_{{\rm M}_{n}(\mathbb{R}/\Z)} \int_{{\rm M}_{n}(\mathbb{R}/\Z)}
f\left(T_1, \tn_-(T_2)\right) \, dT_1 \, dT_2
+E(q),
\end{multline}
where
\begin{align*}
E(q):=\frac{1}{\#{\rm GL}_{n}(\mathbb{Z}/q\mathbb{Z})} 
\sum_{\substack{N, M\in {\rm M}_{n}(\mathbb{Z})\\ N\neq \vecnull \text{ or } M\neq \vecnull}} 
\widehat{f}(N, M) K_n(N, M; q).
\end{align*}
Here, in order to bound the Kloosterman sum 
for $M=\bn$ we apply \eqref{e:Kn0A_bound} in \autoref{cor:Kn0A_boundCOR},
while for $M\neq\bn$
we use \eqref{KBOUNDmaintheoremRES2} in \autoref{KBOUNDmaintheorem}
if $n\geq2$, and the classical Weil bound if $n=1$.
Using also 
\begin{align}\label{GLnZmodqbound}
\#\GL_n(\Z/q\Z)=q^{n^2}\prod_{p\mid q}\prod_{j=1}^n(1-p^{-j})
> q^{n^2}\prod_{j=2}^n\zeta(j)^{-1}\prod_{p\mid q}(1-p^{-1})
\gg q^{n^2}(\log q)^{-1}
\end{align}
(where the first equality holds by \autoref{RqcardLEM},
and the last relation holds by Mertens' third theorem),
and \eqref{e:Fourier_bound_n=d},
we obtain:
\begin{multline}\label{caseneqdpf3}
\bigl|E(q)\bigr| \ll_{\ve} 
\frac{S_{\infty,\lambda}(f)}{q^{n^2-\ve}}
\Biggl(\sum_{\substack{N\in {\rm M}_{n}(\mathbb{Z})\\ N\neq\vecnull}} 
\frac{q^{n^2-n} \gcd(q, N)^{n}}
{\|N\|_\infty^\lambda}
+\sum_{\substack{N, M\in {\rm M}_{n}(\mathbb{Z})\\ M\neq \vecnull}} 
\frac{q^{n^2-\vartheta+\ve} \gcd(q, M)^{\vartheta}}
{1+\|N\|_\infty^\lambda+\|M\|_\infty^\lambda}\Biggr), 
\end{multline}
where $\vartheta$ is as in the statement of \autoref{MAINTHM},
viz., $\vartheta=n-1$ if $n>1$ and $\vartheta=\frac12$ if $n=1$.

In the first sum, we substitute $\ell=\gcd(q,N)$ and $N=\ell\, N_{\new}$;
this gives
\begin{align*}
\sum_{\substack{N\in {\rm M}_{n}(\mathbb{Z})\\ N\neq\vecnull}} 
\frac{\gcd(q, N)^{n}}{\|N\|_\infty^\lambda}
=\sum_{\ell\mid q}\ell^{n-\lambda}\sum_{\substack{N\in {\rm M}_{n}(\mathbb{Z})\\ N\neq\vecnull}}
\|N\|_\infty^{-\lambda}.
\end{align*}
Using here the fact that 
\begin{equation}
\left|\left\{N\in {\rm M}_{n}(\mathbb{Z}): \|N\|_\infty= m\right\}\right| 
\leq 2n^2(2m+1)^{n^2-1}
\qquad(\forall m\in\Z_{\geq0}),
\end{equation}
we obtain
\begin{align}\label{caseneqdpf5}
\sum_{\substack{N\in {\rm M}_{n}(\mathbb{Z})\\ N\neq\vecnull}} 
\frac{\gcd(q, N)^{n}}{\|N\|_\infty^\lambda}
\ll\sum_{\ell\mid q}\ell^{n-\lambda}\sum_{m=1}^\infty m^{n^2-1-\lambda}.
\end{align}
The last sum converges if and only if $\lambda>n^2$,
and when this holds the total expression is bounded above
by a constant which only depends on $\lambda$.

Similarly, regarding the second sum in \eqref{caseneqdpf3} we have:
\begin{align*}
\sum_{\substack{N, M\in {\rm M}_{n}(\mathbb{Z})\\ M\neq \vecnull}} 
&\frac{\gcd(q, M)^{\vartheta}}{1+\|N\|_\infty^\lambda+\|M\|_\infty^\lambda}
\ll\sum_{\ell\mid q}\ell^{\vartheta}
\sum_{\substack{N, M\in {\rm M}_{n}(\mathbb{Z})\\ M\neq \vecnull}}
\frac1{1+\|N\|_\infty^\lambda+\ell^\lambda\|M\|_\infty^\lambda}
\\
&\ll\sum_{\ell\mid q}\ell^{\vartheta}\sum_{u=0}^\infty\sum_{m=1}^\infty
\frac{(u+1)^{n^2-1}m^{n^2-1}}{1+u^\lambda+\ell^\lambda m^\lambda}
\\
&\ll\sum_{\ell\mid q}\ell^{\vartheta}
\biggl(\sum_{m=1}^\infty \ell^{-\lambda}m^{n^2-1-\lambda}
\sum_{u=0}^{\ell m}(u+1)^{n^2-1}
+\sum_{u=\ell+1}^\infty u^{n^2-1-\lambda}\sum_{1\leq m<u/\ell}m^{n^2-1}\biggr)
\\
&\ll\sum_{\ell\mid q}\ell^{\vartheta}
\biggl(\sum_{m=1}^\infty \ell^{-\lambda}m^{n^2-1-\lambda}\,(\ell m)^{n^2}
+\sum_{u=\ell+1}^\infty u^{n^2-1-\lambda}\,(u/\ell)^{n^2}\biggr).
\end{align*}
Both the sums in the last expression are convergent if and only if $\lambda>2n^2$,
and \textit{if} $\lambda>2n^2$ then we obtain
\begin{align}\label{caseneqdpf4}
\sum_{\substack{N, M\in {\rm M}_{n}(\mathbb{Z})\\ M\neq \vecnull}} 
\frac{\gcd(q, M)^{\vartheta}}{1+\|N\|_\infty^\lambda+\|M\|_\infty^\lambda}
\ll_{\lambda}\sum_{\ell\mid q}\ell^{\vartheta+n^2-\lambda}
\ll_{\ve}q^{\ve}.
\end{align}
(If $n\geq2$ then we even have
$\sum_{\ell\mid q}\ell^{\vartheta+n^2-\lambda}\ll1$.)

In conclusion,
using \eqref{caseneqdpf5} and \eqref{caseneqdpf4} in
\eqref{caseneqdpf3},
it follows that for any $\lambda>2n^2$,
\begin{align*}
\bigl|E(q)\bigr| \ll_{\lambda,\ve} S_{\infty,\lambda}(f)\, \bigl(q^{-n+\ve}+q^{-\vartheta+3\ve}\bigr)
\ll S_{\infty,\lambda}(f)\, q^{-\vartheta+3\ve}.
\end{align*}
Using this in \eqref{caseneqdpf10},
setting $\ve=\frac13\ve_{\new}$
and then choosing $\lambda=2n+\ve_{\new}$,
we obtain the relation
\eqref{MAINTHMres},
i.e.\ we have proved \autoref{MAINTHM} in the case $n=d$.
\hfill$\square$

\subsection{The case $n<d$}\label{casenledSEC}
To start the proof in this case,
let $\kappa,\vartheta,\kappa',\vartheta',k,\ve,f$ and $q$ be given as in the statement of 
\autoref{MAINTHM}.
By \autoref{lem:scrRq_parameter} and \autoref{lem:mtx_relation},
\begin{multline}
\mathcal{A}_q(f) 
= \frac{1}{\#\mathcal{R}_q} \sum_{\gamma\in \mathcal{B}_{q}} \sum_{U\in {\rm GL}_n(\mathbb{Z}/q\mathbb{Z})} f\left(q^{-1}\gamma^{-1} \bpm\vecnull\\ U\ebpm, \tn_-\left(q^{-1}\bigl(\bn\hspace{7pt} U^{-1}\bigr)\right) \matr{D_q\gamma}{}{}{I_n}\right)
\\ = \frac{1}{\#\mathcal{R}_q} \sum_{\gamma\in \mathcal{B}_{q}} \sum_{U\in {\rm GL}_n(\mathbb{Z}/q\mathbb{Z})}
\sum_{N\in {\rm M}_{d\times n}(\mathbb{Z})} 
\widehat{f}\left(N; \tn_-\left(q^{-1}\bigl(\bn\hspace{7pt} U^{-1}\bigr)\right) \matr{D_q\gamma}{}{}{I_n} \right) 
e^{2\pi i \frac{{\rm tr} \left(\trans N \gamma^{-1}\sm \vecnull\\ U\esm\right)}{q}}, 
\end{multline}
where, for $N\in {\rm M}_{d\times n}(\mathbb{Z})$ and $h\in {\rm H}$,
\begin{equation}\label{hfNgdef}
\widehat{f}(N; h)
= \int_{\M_{d \times n}(\R/\Z)} f(T, h) e^{-2\pi i\, {\rm tr}(\trans N T)}\, dT.
\end{equation}
Recall from the statement of \autoref{MAINTHM}
that $f\in {\rm C}_b^k(\M_{d \times n}(\R/\Z) \times \Gamma \backslash \Gamma {\rm H})$.
By applying integration by parts in a similar way as in \autoref{lem:Fouriercoeff_bound}, 
we have, for any $0\leq\lambda\leq k$,
$N\in\M_{d\times n}(\Z)$ and $h\in{\rm H}$,
\begin{align}\label{hatfbound}
\bigl|\widehat{f}(N; h) \bigr|\ll\frac{S_{\infty,\lambda}(f)}{1+\|N\|_\infty^\lambda}.
\end{align}
Recall our parametrization of ${\rm H}$ in \eqref{Hparametrization};
note that this can be expressed as
$h=n_-(X)\matr{g}{}{}{I_n}$. 
In line with this we set,
for
$g\in\SL_d(\R)$, $X\in\M_{n \times d}(\R/\Z)$
and $N\in {\rm M}_{d\times n}(\mathbb{Z})$:
\begin{equation}\label{FgXNdef}
F(g, X; N) = \widehat{f}\left(N, \tn_-(X)\matr g{}{}{I_n}\right). 
\end{equation}
By \eqref{e:Fourier_ASL}, 
\begin{equation}
F(g, X; N) = \sum_{M\in {\rm M}_{n\times d}(\mathbb{Z})} \widehat{F}(g; M, N) e^{2\pi i {\rm tr}(\trans M X)} 
\end{equation}
where 
\begin{multline}\label{hatFdef}
\widehat{F}(g; M, N) = \int_{\M_{n\times d}(\R/\Z)} F(g, T; N) e^{-2\pi i{\rm tr}(\trans M T)}\, dT
\\ = \int_{\M_{d \times n}(\R/\Z)} \int_{\M_{n\times d}(\R/\Z)}
f\left(T_1, \tn_-(T_2)\matr g{}{}{I_n} \right) 
e^{-2\pi i {\rm tr}(\trans N T_1)} e^{-2\pi i{\rm tr}(\trans M T_2)}\, dT_2\, dT_1. 
\end{multline}
Hence we have 
\begin{align*}
\mathcal{A}_q(f) 
= \frac{1}{\#\mathcal{R}_q} 
\sum_{\substack{N\in {\rm M}_{d\times n}(\mathbb{Z})\\ M\in\M_{n\times d}(\Z)}}
\sum_{\gamma\in \mathcal{B}_{q}}
\sum_{U\in {\rm GL}_n(\mathbb{Z}/q\mathbb{Z})}
\widehat{F}\left(D_q\gamma; M, N\right)
e^{2\pi i \frac{{\rm tr}\left(\trans M (\bn\: U^{-1})\right) + {\rm tr} \left(\trans N \gamma^{-1} \sm \vecnull \\ U\esm\right)}{q}}.
\end{align*}
Recalling now the definition of the matrix Kloosterman sum,
\eqref{Kndefrep},
and using
$\cmatr{\bn}U=\cmatr{\bn}{I_n} U$ and
\begin{align}
\tr\bigl(\trans M\bigl(\bn\: U^{-1}\bigr)\bigr)
=\tr\bigl(\trans M U^{-1}\bigl(\bn\: I_n\bigr)\bigr)
=\tr\bigl(\bigl(\bn\: I_n\bigr)\trans M U^{-1}\bigr),
\end{align}
we obtain
\begin{equation}
\mathcal{A}_q(f) 
=\frac{1}{\#\mathcal{R}_q} 
\sum_{\substack{N\in {\rm M}_{d\times n}(\mathbb{Z})\\ M\in\M_{n\times d}(\Z)}}
\sum_{\gamma\in \mathcal{B}_{q}}
\widehat{F}\bigl(D_q\gamma; M, N\bigr)
K_n\biggl(\bigl( \bn\: I_n\bigr) \trans M , \trans N\gamma^{-1}\cmatr{\bn}{I_n};q\biggr).
\end{equation}
We split this sum into three parts
by separating out the two cases
$N=M=\bn$
and [$N\neq\bn$, $M=\bn$]: 
\begin{align}\label{e:scrSqf_decomp}
\scrA_q(f)=E_{0,q}(f)+E_{1,q}(f)+E_{2,q}(f),
\end{align}
where
\begin{align}\label{e:E0q_def}
&E_{0,q}(f)=
\frac{\#{\rm GL}_n(\mathbb{Z}/q\mathbb{Z})}{\#\mathcal{R}_q} 
\sum_{\gamma\in \mathcal{B}_{q}}
\widehat{F}\left(D_q \gamma; \vecnull, \vecnull \right),
\\\label{e:E1q_def}
&E_{1,q}(f)=
\frac{1}{\#\mathcal{R}_q} 
\sum_{\substack{N\in {\rm M}_{d\times n}(\mathbb{Z})\\ N\neq\bn}}
\sum_{\gamma\in \mathcal{B}_{q}}
\widehat{F}\left(D_q \gamma; \vecnull, N\right)
K_n\left(\vecnull, \trans N\gamma^{-1}\cmatr{\bn}{I_n}; q\right)
\end{align}
and
\begin{align}\label{e:E2q_def}
E_{2, q}(f) 
=  \frac{1}{\#\mathcal{R}_q} 
\sum_{\substack{N\in {\rm M}_{d\times n}(\mathbb{Z})\\ M\in\M_{n\times d}(\Z)\setminus\{\bn\}}}
\sum_{\gamma\in \mathcal{B}_{q}}
\widehat{F}\left(D_q \gamma; M, N\right)
K_n\biggl(\bigl( \bn\: I_n\bigr) \trans M , \trans N\gamma^{-1}\cmatr{\bn}{I_n};q\biggr).
\end{align}

\subsection{The main term: $E_{0,q}(f)$}

We apply the equidistribution of Hecke points to the sum $E_{0,q}(f)$ in \eqref{e:E0q_def}.
Note that by \autoref{lem:Fouriercoeff_semiautomorphic},
$\widehat{F}(g; \vecnull, \vecnull)$ %
is a left $\SL_d(\Z)$-invariant function of $g\in\SL_d(\R)$.
Using \eqref{e:Heckeop_def}
and $\#\scrR_q=\#\scrB_q\cdot\#\GL_n(\Z/q\Z)$
(which holds by \autoref{lem:scrRq_parameter}),
we have 
\begin{equation}\label{E0qHeckeformula}
E_{0,q}(f)
= \frac{1}{\#\mathcal{B}_q} 
\sum_{\gamma\in \mathcal{B}_q} \widehat{F}\left(D_q \gamma; \vecnull, \vecnull\right)
= \big(T_{D_q} \widehat{F}(\,\cdot\:; \vecnull, \vecnull)\big)(I_d). 
\end{equation} 
Recall that we are keeping $1\leq n<d$,
and that $\kappa',\vartheta',k,\ve,f$ and $q$ are 
given as in the statement of \autoref{MAINTHM};
in particular we have
$f\in {\rm C}_b^k((\R/\Z)^{dn} \times \Gamma \backslash \Gamma {\rm H})$
where $k$ is an integer with $k>\kappa'+\ve$.
It follows that $\widehat{F}(\,\cdot\: ; \vecnull, \vecnull)\in {\rm C}_b^k({\rm SL}_d(\mathbb{Z})\backslash {\rm SL}_d(\mathbb{R}))$,
and now by \eqref{E0qHeckeformula} and \autoref{HeckeequidistrPROP} we have
\begin{align}\label{Heckeequidistrapplic}
\left|E_{0,q}(f)
- \int_{{\rm SL}_d(\mathbb{Z})\backslash {\rm SL}_d(\mathbb{R})} \widehat{F}(g; \vecnull, \vecnull)\, d\mu_0(g)\right|
\ll_{\ve} S_{2,\kappa'+\ve}(\widehat{F}(\,\cdot\:; \vecnull, \vecnull))\, q^{-\vartheta'+\ve}.
\end{align}
Here
\begin{multline}\label{Heckeequidistrapplicfact1}
\int_{{\rm SL}_d(\mathbb{Z})\backslash {\rm SL}_d(\mathbb{R})} \widehat{F}(g; \vecnull, \vecnull)\, d\mu_0(g)
\\ = \int_{{\rm SL}_d(\mathbb{Z})\backslash {\rm SL}_d(\mathbb{R})} 
\int_{\M_{d \times n}(\R/\Z)}
 \int_{\M_{n\times d}(\R/\Z)}
f\left(T_1, \tn_-(T_2)\matr g{}{}{I_n} \right) \, dT_2\ dT_1 \, d\mu_0(g)
\\ = \int_{\M_{d \times n}(\R/\Z)} 
\int_{\Gamma\backslash \Gamma {\rm H}} f(T, g) \, d\mu_{\rm H}(g)\, dT.
\end{multline}
Finally, in order to compare 
$S_{2,\kappa'+\ve}(\widehat{F}(\,\cdot\:; \vecnull, \vecnull))$
with $S_{2,\kappa'+\ve}(f)$,
let $\scrB$ and $\scrB'$ be the fixed linear bases for
the Lie algebra of $\SL_d(\R)$ 
and the Lie algebra of $\M_{d\times n}(\R)\times{\rm H}$ 
which are used in the definitions of the Sobolev norms; 
we may then assume that $\scrB\subset\scrB'$
when the Lie algebra of $\SL_d(\R)$ is embedded in the
Lie algebra of $\M_{d\times n}(\R)\times{\rm H}$ via the differential of the
homomorphism
$g\mapsto \left(\bn,\sm g & \bn \\ \bn & I_n\esm\right)$. 
Then for any monomial $\scrD$ in $\scrB$ of order $\leq k$, and every $g\in\SL_d(\R)$, we have
\begin{align*}
\bigl[\scrD \hF\bigr](g;\bn,\bn)
=\int_{\M_{d \times n}(\R/\Z)}
 \int_{\M_{n\times d}(\R/\Z)}
[\scrD f]\left(T_1, \tn_-(T_2)\matr g{}{}{I_n} \right) \, dT_2\, dT_1,
\end{align*}
and hence
\begin{align*}
\bigl|\bigl[\scrD \hF\bigr](g;\bn,\bn)\bigr|^2
\leq\int_{\M_{d \times n}(\R/\Z)}
 \int_{\M_{n\times d}(\R/\Z)}
\left|[\scrD f]\left(T_1, \tn_-(T_2)\matr g{}{}{I_n} \right)\right|^2 \, dT_2\, dT_1.
\end{align*}
Integrating the last inequality over $g$, it follows that
\begin{align*}
\bigl\|\bigl[\scrD \hF\bigr](\:\cdot\:;\bn,\bn)\bigr\|_{{\rm L}^2(\SL_d(\Z)\bs\SL_d(\R))}
=\sqrt{\int_{\SL_d(\Z)\bs\SL_d(\R)}\bigl|\bigl[\scrD \hF\bigr](g;\bn,\bn)\bigr|^2\,d\mu_0(g)}
\hspace{50pt}
\\
\leq
\|\scrD f\|_{{\rm L}^2((\R/\Z)^{dn}\times\Gamma\bs\Gamma{\rm H})}.
\end{align*}
For any integer $0\leq k_1\leq k$,
by summing the above inequality over all monomials in $\scrB$ of order $\leq k_1$,
it follows that
$S_{2,k_1}\bigl(\hF(\:\cdot\: ;\bn,\bn)\bigr)\leq S_{2,k_1}(f)$.
Hence also
$S_{2,\lambda}\bigl(\hF(\:\cdot\: ;\bn,\bn)\bigr)\leq S_{2,\lambda}(f)$
for any real number $0\leq\lambda\leq k$.
Using this fact together with \eqref{Heckeequidistrapplicfact1}
in \eqref{Heckeequidistrapplic}, we conclude:
\begin{equation}\label{e:separating_main_scrSqf}
\left|E_{0,q}(f)-\int_{\M_{d \times n}(\R/\Z)} 
\int_{\Gamma\backslash \Gamma {\rm H}} f(T, g) \,d\mu_{\rm H}(g) \,dT \right|
\ll_{\ve}
S_{2,\kappa'+\ve}(f)\, q^{-\vartheta'+\ve}.
\end{equation}

\subsection{Error term 1: $E_{1, q}(f)$} 

It follows from \eqref{hatFdef} and \eqref{hatfbound} that
\begin{align*}
\bigl|\hF(g;M,N)\bigr|\ll \frac{S_{\infty,\lambda}(f)}{1+\|N\|_\infty^\lambda}
\end{align*}
for all $0\leq\lambda\leq k$, $g\in\SL_d(\R)$ and $N\in\M_{d\times n}(\Z)$,
$M\in\M_{n\times d}(\Z)$.
Using 
this bound together with 
\autoref{cor:Kn0A_boundCOR}
in \eqref{e:E1q_def},
we obtain:
\begin{align}\notag
|E_{1, q}(f)|
 \ll S_{\infty,\lambda}(f)
\frac{q^{n^2}}{\#\mathcal{R}_q} 
\sum_{\substack{N\in {\rm M}_{d\times n}(\mathbb{Z})\\ N\neq \vecnull}}
\|N\|_{\infty}^{-\lambda}
\sum_{\gamma\in \mathcal{B}_{q}}
\left(\frac{q}{\gcd\bigl(q, \trans N \gamma^{-1}\scmatr{\bn}{I_n}\bigr)}\right)^{-n}
\\ \label{E1qtreatment1}
 \leq S_{\infty,\lambda}(f) 
\frac{q^{n^2-n}}{\#\mathcal{R}_q} 
\sum_{\substack{N\in {\rm M}_{d\times n}(\mathbb{Z})\\ N\neq \vecnull}}
\|N\|_{\infty}^{-\lambda}
\sum_{\ell\mid q} \ell^n
\scrA_\ell(N),
\end{align}
where
\begin{align*}
\scrA_\ell(N)=\#\bigl\{\gamma\in \mathcal{B}_q\col  \ell \mid \trans N \gamma^{-1}\scmatr{\bn}{I_n}\bigr\}.
\end{align*}

For any $N\in\M_{d\times n}(\Z)$, $\ell\mid q$,
$\gamma\in\mathcal{B}_q$ and $U\in\GL_n(\Z/q\Z)$,
by multiplying by $U$ from the right,
it follows that the relation 
$\ell \mid \trans N \gamma^{-1}\scmatr{\bn}{I_n}$
is equivalent with 
$\ell \mid \trans N \gamma^{-1}\scmatr{\bn}{U}$.
Hence by \autoref{lem:scrRq_parameter},
\begin{align}\notag
\scrA_\ell(N)
&\leq\frac{\#\bigl\{X\in\M_{d\times n}(\Z/q\Z)\col \trans N X\equiv\bn\mod\ell\bigr\}}{\#\GL_n(\Z/q\Z)}
\\\label{E1qtreatment3}
&=\frac{\#\bigl\{X\in\M_{d\times n}(\Z/q\Z)\col \trans N' X\equiv\bn\mod\ell'\bigr\}}{\#\GL_n(\Z/q\Z)},
\end{align}
where we write $N':=\gcd(\ell,N)^{-1}N$ and $\ell':=\gcd(\ell,N)^{-1}\ell$.
To bound the last expression,
note that 
$X\in\M_{d\times n}(\Z/q\Z)$ satisfies the relation $\trans N' X\equiv\bn\mod\ell'$
if and only if
$\trans N'X\equiv\bn\mod \gcd(p^r,\ell')$
holds for every prime power $p^r$ dividing $q$ (with %
$r\geq1$).
But by construction we have
$\gcd(\ell',N')=1$;
hence if $p\mid\ell'$ then
$\trans N'$ has at least one row, 
say $\vecn=\vecn(p)\in\Z^d$, which is not divisible by $p$,
which means that there are exactly 
$p^{rdn}/\gcd(p^r,\ell')^n$ matrices
$X\in\M_{d\times n}(\Z/p^r\Z)$ satisfying $\vecn X\equiv\bn\mod\gcd(p^r,\ell')$.
Hence
\begin{align*}
\#\bigl\{X\in\M_{d\times n}(\Z/p^r\Z)\col \trans N' X\equiv\bn\mod\gcd(p^r,\ell')\bigr\}
\leq\frac{p^{rdn}}{\gcd(p^r,\ell')^n}.
\end{align*}
Using this bound for each prime power $p^r$ dividing $q$,
and multiplying, it follows that
\begin{align}
\#\bigl\{X\in\M_{d\times n}(\Z/q\Z)\col \trans N' X\equiv\bn\mod\ell'\bigr\}
\leq\frac{q^{dn}}{{\ell'\,}^n}
=\frac{q^{dn}}{\ell^n}\gcd(\ell,N)^n.
\end{align}
Recalling also \eqref{GLnZmodqbound},
we conclude:
\begin{align}\label{E1qtreatment2}
\scrA_\ell(N)\ll_{n,\ve} q^{(d-n)n+\ve}\ell^{-n}\gcd(\ell,N)^{n}.
\end{align}

Let us also note that,
by \autoref{RqcardLEM} and since $d>n$,
\begin{align}\label{E1qtreatment4}
\#\scrR_q=q^{dn}\prod_{p\mid q}\prod_{j=d+1-n}^d(1-p^{-j})
>q^{dn}\prod_{j=d+1-n}^d\zeta(j)^{-1}
\gg q^{dn}.
\end{align}
Using the bounds \eqref{E1qtreatment3}, \eqref{E1qtreatment2}
and \eqref{E1qtreatment4}
in \eqref{E1qtreatment1},
we obtain:
\begin{align*}
|E_{1, q}(f)|\ll_{\ve} S_{\infty,\lambda}(f) 
\,q^{-n+\ve}
\sum_{\substack{N\in {\rm M}_{d\times n}(\mathbb{Z})\\ N\neq \vecnull}}
\|N\|_{\infty}^{-\lambda}
\sum_{\ell\mid q} \gcd(\ell,N)^{n}.
\end{align*}
Recall that this holds for any real number $\lambda$ in the interval $0\leq\lambda\leq k$,
with $k$ as in \autoref{MAINTHM}.

Now note that for each positive integer $\ell$ we have\footnote{Here we work with
nonnegative sums taking values in $\R_{\geq0}\cup\{+\infty\}$;
note that a priori we may have
$\sum_{\substack{N\in {\rm M}_{d\times n}(\mathbb{Z})\\ N\neq \vecnull}}
\|N\|_{\infty}^{-\lambda}\gcd(\ell,N)^{n}=+\infty$;
however our computation shows that
$\sum_{\substack{N\in {\rm M}_{d\times n}(\mathbb{Z})\\ N\neq \vecnull}}
\|N\|_{\infty}^{-\lambda}\gcd(\ell,N)^{n}<\infty$ whenever $\lambda>dn$.}
\begin{align}\label{E1qtreatment5}
\sum_{\substack{N\in {\rm M}_{d\times n}(\mathbb{Z})\\ N\neq \vecnull}}
\|N\|_{\infty}^{-\lambda}\gcd(\ell,N)^{n}
\leq\sum_{\delta\mid\ell}
\sum_{\substack{N\in {\rm M}_{d\times n}(\mathbb{\delta Z})\\ N\neq \vecnull}}
\|N\|_{\infty}^{-\lambda}\delta^{n}
=\sum_{\delta\mid\ell}\delta^{n-\lambda}
\sum_{\substack{N'\in {\rm M}_{d\times n}(\mathbb{Z})\\ N'\neq \vecnull}}
\|N'\|_{\infty}^{-\lambda},
\end{align}
where we substituted $N=\delta N'$.
However, using the fact that
\begin{align}\label{countinginMdnZ}
\#\bigl\{N'\in\M_{d\times n}(\Z)\col \|N'\|_\infty=m\bigr\}
\leq 2dn(2m+1)^{dn-1}
\qquad (\forall m\in\Z_{\geq0}),
\end{align}
one verifies that the sum 
$\sum_{N'\neq\bn}\|N'\|_{\infty}^{-\lambda}$ is finite whenever $\lambda>dn$;
and in this case the expression in 
\eqref{E1qtreatment5}
is $\ll_{\lambda}\sum_{\delta\mid\ell}\delta^{n-\lambda}\ll_{\lambda} 1$,
since $n-\lambda<n-dn\leq-1$.
We may here choose $\lambda=dn+1$ (this is permissible since $dn+1\leq 2dn<k$),
and conclude:
\begin{align}\label{E1concl}
|E_{1, q}(f)|\ll_{\ve} S_{\infty,dn+1}(f)\, q^{-n+\ve}\sum_{\ell\mid q}1
\ll_{\ve} S_{\infty,dn+1}(f)\, q^{-n+2\ve}.
\end{align}

\subsection{Error term 2: $E_{2, q}(f)$}\label{E2qfSEC}
Recalling \eqref{e:E2q_def}, 
for any $N\in {\rm M}_{d\times n}(\mathbb{Z})$ we let 
\begin{equation}\label{e:E2qfN_def}
E_{2, q}(f;N) 
=  \frac{1}{\#\mathcal{R}_q} 
\sum_{\substack{M\in {\rm M}_{n\times d}(\mathbb{Z})\\ M\neq \vecnull}} 
\sum_{\gamma\in \mathcal{B}_{q}}
\widehat{F}\left(D_q \gamma; M, N\right)
K_n\biggl(\bigl( \bn\: I_n\bigr) \trans M , \trans N\gamma^{-1}\cmatr{\bn}{I_n};q\biggr),
\end{equation}
so that
\begin{equation}\label{E2qassum}
E_{2, q}(f) = \sum_{N\in {\rm M}_{d\times n}(\mathbb{Z})} E_{2, q}(f; N). 
\end{equation}

By \autoref{lem:Fouriercoeff_bound},
using also \eqref{hfNgdef} and \eqref{FgXNdef}, we have
\begin{equation}\label{e:widehatF_bound_qin}
\left|\widehat{F}\left(D_q \gamma; M, N\right)\right|
\ll \frac{S_{\infty,\lambda}(f)}{1+\|M \trans(D_q\gamma)^{-1}\|_{\infty}^{\lambda}},
\end{equation}
for any real number $\lambda$ in the interval $0\leq\lambda\leq k$.
Also, by \eqref{hatfbound} and \eqref{hatFdef}, 
\begin{align*}
\left|\widehat{F}\left(D_q \gamma; M, N\right)\right|
\ll \frac{S_{\infty,\lambda}(f)}{1+\|N\|_{\infty}^{\lambda}}.
\end{align*}
Hence for all $M\in\M_{n\times d}(\Z)$ and $N\in\M_{d\times n}(\Z)$, we have
\begin{align*}
\left|\widehat{F}\left(D_q \gamma; M, N\right)\right|
\ll\frac{S_{\infty,\lambda}(f)}{1+\|M \trans(D_q\gamma)^{-1}\|_{\infty}^{\lambda}+\|N\|_{\infty}^{\lambda}}.
\end{align*}
Using this bound in \eqref{e:E2qfN_def},
it follows that for every 
$N\in {\rm M}_{d\times n}(\mathbb{Z})$,
\begin{align}\notag
\left|E_{2, q}(f;N) \right|
&\ll \frac{S_{\infty,\lambda}(f) }{\#\mathcal{R}_q} 
\sum_{\substack{M\in {\rm M}_{n\times d}(\mathbb{Z})\\ M\neq \vecnull}} 
\sum_{\gamma\in \mathcal{B}_{q}}
\frac{\Bigl|K_n\Bigl(\bigl( \bn\: I_n\bigr) \trans M , \trans N\gamma^{-1}\scmatr{\bn}{I_n};q\Bigr)\Bigr|}
{1+\|M \trans (D_q\gamma)^{-1}\|_{\infty}^{\lambda}+\|N\|_{\infty}^{\lambda}}
\\\label{E2qtreatment1}
&\ll_{\ve}
\frac{S_{\infty,\lambda}(f) }{\#\mathcal{R}_q} 
\sum_{\substack{M\in {\rm M}_{n\times d}(\mathbb{Z})\\ M\neq \vecnull}} 
\sum_{\gamma\in \mathcal{B}_{q}}
\frac{q^{n^2-\vartheta+\ve}\gcd\bigl(q,\bigl(\bn\: I_n\bigr)\trans M \bigr)^{\vartheta}}
{1+\|M \trans (D_q\gamma)^{-1}\|_{\infty}^{\lambda}+\|N\|_{\infty}^{\lambda}},
\end{align}
where we recall that $\vartheta=n-1$ if $n\geq2$, $\vartheta=\frac12$ if $n=1$;
in the last step we used \eqref{KBOUNDmaintheoremRES2} in \autoref{KBOUNDmaintheorem}
if $n\geq2$, and the classical Weil bound if $n=1$.
Writing here $M=\bigl(M_0\hspace{7pt} M_1\bigr)$ with
$M_0\in\M_{n\times (d-n)}(\Z)$ and $M_1\in\M_{n}(\Z)$,
and setting $\ell:=\gcd\bigl(q,\bigl(\bn\hspace{6pt} I_n\bigr)\trans M\bigr)=\gcd(q,M_1)$
and $M_1':=\ell^{-1}M_1$, it follows that
\begin{align*}
\left|E_{2, q}(f;N) \right|
\ll_{\ve}S_{\infty,\lambda}(f)\frac{q^{n^2-\vartheta+\ve}}{\#\mathcal{R}_q} 
\sum_{\ell\mid q}\ell^{\vartheta}
\sum_{M_0\in\M_{n\times(d-n)}(\Z)}\sum_{\substack{M_1'\in\M_{n}(\Z)\\ M_0=\bn\Rightarrow M_1'\neq\bn}}
\hspace{50pt}
\\
\sum_{\gamma\in \mathcal{B}_{q}}
\Bigl(1+\bigl\|\bigl(M_0\hspace{7pt} \ell M_1'\bigr) D_q^{-1}\trans\gamma^{-1}\bigr\|_{\infty}^{\lambda}+\|N\|_{\infty}^{\lambda}\Bigr)^{-1}.
\end{align*}
Setting now $X:=\bigl(M_0 \hspace{5pt} M_1'\bigr)$
we have, using \eqref{Dqdefrep},
\begin{align*}
\bpm M_0 & \ell M_1'\ebpm D_q^{-1}
=\ell^{\frac nd}X D_{q/\ell}^{-1},
\end{align*}
and thus the last bound can be expressed as follows:
\begin{align}\notag
\bigl|E_{2, q} &(f;N) \bigr|
\\\label{e:E2qNbound_separate}
&\ll_{\ve}S_{\infty,\lambda}(f)\frac{q^{n^2-\vartheta+\ve}}{\#\mathcal{R}_q} 
\sum_{\ell\mid q}\ell^{\vartheta}
\sum_{\substack{X\in\M_{n\times d}(\Z)\\ X\neq\bn}}
\sum_{\gamma\in \mathcal{B}_{q}}
\Bigl(1+\|N\|_{\infty}^{\lambda}+\ell^{\frac nd \lambda}\bigl\|X D_{q/\ell}^{-1}\,\trans\gamma^{-1}\bigr\|_{\infty}^{\lambda}\Bigr)^{-1}.
\end{align}

Assuming from now on that $\lambda>nd$,
and using the majorant function $\Phi_{a,b}^{(\kappa)}$ introduced
in \eqref{e:Phiell_def},
the last bound can be expressed:
\begin{align}\label{e:E2qNbound_separate_rep}
\bigl|E_{2, q} (f;N) \bigr|
\ll_{\ve}S_{\infty,\lambda}(f)\frac{q^{n^2-\vartheta+\ve}}{\#\mathcal{R}_q} 
\sum_{\ell\mid q}\ell^{\vartheta}
\sum_{\gamma\in \mathcal{B}_{q}}
\Phi^{(\lambda)}_{1+\|N\|_{\infty}^\lambda,\ell^{\lambda n/d}}\bigl(D_{q/\ell}^{-1}\trans \gamma^{-1}\bigr).
\end{align}

We will need the following simple lemma.
\begin{lemma}\label{Heckesimplelem}
For any function $\Phi: {\rm SL}_d(\mathbb{Z}) \backslash {\rm SL}_d(\mathbb{R}) \to \mathbb{C}$,
$u\mid q$ and $g\in\SL_d(\R)$,
\begin{align}\label{HeckesimplelemRES}
\sum_{\gamma\in \mathcal{B}_{q}}\Phi\bigl(D_u^{-1}\,\trans\gamma^{-1} g\bigr)
=\#\scrB_q\cdot\bigl(T_{D_u}^*\Phi\bigr)(g).
\end{align}
\end{lemma}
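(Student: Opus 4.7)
The plan is to reduce the sum on the left-hand side of \eqref{HeckesimplelemRES} to the sum appearing in the formula \eqref{TDqdualformula} for $T_{D_u}^*\Phi$, by exploiting the fact that $u \mid q$ implies $\Gamma^0(q) \subseteq \Gamma^0(u)$, and that the summand is constant on left $\Gamma^0(u)$-cosets.

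First I would verify the key invariance: for every $\eta \in \Gamma^0(u)$ and every $\gamma \in \SL_d(\Z)$,
\begin{equation*}
\Phi\bigl(D_u^{-1}\,\trans(\eta\gamma)^{-1} g\bigr) = \Phi\bigl(D_u^{-1}\,\trans\gamma^{-1} g\bigr).
\end{equation*}
Since $D_u$ is diagonal we have $\trans D_u = D_u$, so
\begin{equation*}
D_u^{-1}\,\trans(\eta\gamma)^{-1} = \bigl(D_u^{-1}\trans\eta^{-1} D_u\bigr) D_u^{-1} \trans\gamma^{-1} = \trans(D_u\eta^{-1}D_u^{-1})\,D_u^{-1}\trans\gamma^{-1}.
\end{equation*}
By the definition of $\Gamma^0(u)$ in \eqref{e:Gamma0dnq_def}, and the fact noted in the proof of \autoref{lem:Hecke_decomp} that $\Gamma^0(u) = D_u^{-1}\SL_d(\Z)D_u \cap \SL_d(\Z)$, the matrix $D_u\eta^{-1}D_u^{-1}$ lies in $\SL_d(\Z)$; hence so does its transpose, and the left-$\SL_d(\Z)$ invariance of $\Phi$ gives the claim.

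Next, since $u \mid q$ implies $\Gamma^0(q) \subseteq \Gamma^0(u)$, each left coset of $\Gamma^0(u)$ in $\SL_d(\Z)$ splits into exactly $[\Gamma^0(u):\Gamma^0(q)]$ left cosets of $\Gamma^0(q)$. Therefore, grouping the elements of $\mathcal{B}_q$ according to their $\Gamma^0(u)$-coset and using the invariance above,
\begin{equation*}
\sum_{\gamma \in \mathcal{B}_q} \Phi\bigl(D_u^{-1}\,\trans\gamma^{-1} g\bigr) = [\Gamma^0(u):\Gamma^0(q)] \sum_{\delta \in \Gamma^0(u)\backslash\SL_d(\Z)} \Phi\bigl(D_u^{-1}\,\trans\delta^{-1} g\bigr).
\end{equation*}
Applying \eqref{TDqdualformula} with $D_u$ in place of $D_q$ and multiplying by $\#(\Gamma^0(u)\backslash\SL_d(\Z))$, the right-hand side equals $[\Gamma^0(u):\Gamma^0(q)] \cdot \#(\Gamma^0(u)\backslash\SL_d(\Z)) \cdot (T_{D_u}^*\Phi)(g) = \#(\Gamma^0(q)\backslash\SL_d(\Z)) \cdot (T_{D_u}^*\Phi)(g) = \#\mathcal{B}_q \cdot (T_{D_u}^*\Phi)(g)$, which is the desired identity.

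The only genuinely non-routine step is the invariance check, and the main point to get right there is the transpose bookkeeping, using that $D_u$ is diagonal to pass between $\Gamma^0(u)$ and its image under transposition. Everything else is coset counting.
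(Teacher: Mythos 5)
Your proof is correct and follows essentially the same route as the paper's: both arguments rest on the observation that $\eta\in\Gamma^0(u)$ implies $D_u^{-1}\,\trans\eta^{-1}D_u\in\SL_d(\Z)$ (so the summand is left $\Gamma^0(u)$-invariant), followed by the coset count $\#\scrB_q=[\Gamma^0(u):\Gamma^0(q)]\cdot\#\scrB_u$ and an appeal to \eqref{TDqdualformula}. The paper phrases the grouping as a factorization $\gamma=\gamma_1\gamma_2$ with $\gamma_1\in\Gamma^0(q)\bs\Gamma^0(u)$ and $\gamma_2\in\scrB_u$, which is the same bookkeeping you do by partitioning $\scrB_q$ into $\Gamma^0(u)$-cosets.
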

\begin{proof}
We have
\begin{align*}
\sum_{\gamma\in \mathcal{B}_{q}}\Phi\bigl(D_u^{-1}\,\trans\gamma^{-1} g\bigr)
=\sum_{\gamma_1\in\Gamma^0(q)\bs\Gamma^0(u)}
\sum_{\gamma_2\in\scrB_u}\Phi\bigl(D_u^{-1}\,\trans(\gamma_1\gamma_2)^{-1}\, g\bigr).
\end{align*}
But $\gamma_1\in\Gamma^0(u)$ implies
$D_u\,\gamma_1 D_u^{-1}\in\SL_d(\Z)$;
hence
$D_u^{-1}\,\trans\gamma_1^{-1} D_u\in\SL_d(\Z)$
and
$\Phi\bigl(D_u^{-1}\,\trans(\gamma_1\gamma_2)^{-1}\, g\bigr)
=\Phi\bigl(D_u^{-1}\,\trans\gamma_2^{-1}\, g\bigr)$,
and so we get
\begin{align*}
\sum_{\gamma\in \mathcal{B}_{q}}\Phi\bigl(D_u^{-1}\,\trans\gamma^{-1} g\bigr)
=\#\bigl(\Gamma^0(q)\bs\Gamma^0(u)\bigr)
\sum_{\gamma_2\in\scrB_u}\Phi\bigl(D_u^{-1}\,\trans\gamma_2^{-1} g\bigr)
\hspace{140pt}
\\
=\#\bigl(\Gamma^0(q)\bs\Gamma^0(u)\bigr)
\cdot\#\scrB_u\cdot (T_{D_u}^*\Phi)(g)
=\#\scrB_q\cdot (T_{D_u}^*\Phi)(g),
\end{align*}
where the second equality holds by 
\eqref{TDqdualformula}.
\end{proof}
Using \autoref{Heckesimplelem}
and $\#\scrR_q=\#\scrB_q\cdot\#\GL_n(\Z/q\Z)$,
the bound in \eqref{e:E2qNbound_separate_rep}
can be rewritten as follows:
\begin{align}\label{E2qtreatment4}
\bigl|E_{2, q} (f;N) \bigr|
\ll_{\ve}S_{\infty,\lambda}(f)\frac{q^{n^2-\vartheta+\ve}}{\#\GL_n(\Z/q\Z)} 
\sum_{\ell\mid q}\ell^{\vartheta}\cdot
\Big[T^*_{D_{q/\ell}}\Phi^{(\lambda)}_{1+\|N\|_{\infty}^\lambda,\ell^{\lambda n/d}}\Bigr]\bigl(I_d\bigr).
\end{align}

We will bound 
$\bigl(T^*_{D_{q/\ell}}\Phi^{(\lambda)}_{a,b}\bigr)\bigl(I_d\bigr)$ from above by an integral over 
$\SL_d(\Z)\bs\SL_d(\R)$.
Fix a fundamental domain $\scrF_d$ for 
$\SL_d(\Z)\bs\SL_d(\R)$
containing $I_d$ in its interior,
and then fix an open neighbourhood $\Omega\subset\scrF_d$ of $I_d$ so small that
for every $w\in \Omega$ and every $\vecv\in \mathbb{R}^d$, 
\begin{equation}\label{Omegaass}
\frac{1}{2}\|\vecv\|_\infty \leq \|\vecv w\|_\infty \leq 2\|\vecv\|_\infty.
\end{equation}
This implies that for every $w\in \Omega$ and every $A\in {\rm M}_{n\times d}(\mathbb{R})$, 
\begin{equation}\label{e:size_Omega}
\frac{1}{2}\|A\|_\infty \leq \|A w\|_\infty \leq 2\|A\|_\infty. 
\end{equation}
Hence for any $a,b>0$, $g\in\SL_d(\R)$, $w\in\Omega$
and $X\in\M_{n\times d}(\R)$,
we have
\begin{align*}
2^{-\lambda}\bigl(a+b\|Xg\|_\infty^{\lambda}\bigr)
\leq a+b\|Xgw\|_\infty^{\lambda}
\leq 2^{\lambda}\bigl(a+b\|Xg\|_\infty^\lambda\bigr),
\end{align*}
and thus, recalling \eqref{e:Phiell_def},
we conclude that
\begin{equation}\label{E2qtreatment2}
2^{-\lambda} \Phi^{(\lambda)}_{a,b}(g)\leq \Phi^{(\lambda)}_{a,b}(gw) \leq 2^{\lambda}\Phi^{(\lambda)}_{a,b}(g). 
\end{equation}
Recalling now that, by \eqref{TDqdualformula},
\begin{align*}
\bigl[T^*_{D_{q/\ell}}\Phi\bigr](g)
=\frac1{\#\scrB_{q/\ell}}\sum_{\gamma\in\scrB_{q/\ell}}\Phi(D_{q/\ell}^{-1}\,\trans\gamma^{-1} g),
\end{align*}
and applying the left inequality in \eqref{E2qtreatment2} with $g=D_{q/\ell}^{-1}\,\trans\gamma^{-1}$ for each $\gamma\in\scrB_{q/\ell}$,
we conclude that
\begin{align*}
2^{-\lambda}\bigl[T^*_{D_{q/\ell}} \Phi^{(\lambda)}_{a,b}\bigr](I_d)
\leq \bigl[T^*_{D_{q/\ell}} \Phi^{(\lambda)}_{a,b}\bigr](w),
\qquad\forall w\in\Omega.
\end{align*}
Hence
\begin{align}\label{E2qtreatment5}
\bigl[T^*_{D_{q/\ell}}\Phi^{(\lambda)}_{a,b}\bigr](I_d)\leq
\frac{2^\lambda}{\int_{\Omega}d\mu_0(w)}\int_{\Omega}
\bigl[T^*_{D_{q/\ell}} \Phi^{(\lambda)}_{a,b}\bigr](w)\,d\mu_0(w)
\leq
\frac{2^\lambda}{\int_{\Omega}d\mu_0(w)}\int_{\scrF_d}\bigl[T^*_{D_{q/\ell}} \Phi^{(\lambda)}_{a,b}\bigr](g)\,d\mu_0(g),
\end{align}
where the last inequality holds since $\Phi^{(\lambda)}_{a,b}(g)>0$ everywhere.
It should be noted that in \eqref{E2qtreatment5} we are again working with
nonnegative sums and integrals taking values in $\R_{\geq0}\cup\{+\infty\}$;
a priori one or both of the integrals in \eqref{E2qtreatment5} may equal $+\infty$,
however we will see below that this is not the case.

Next, using \eqref{TDqdual} 
we have
$\langle T^*_{D_q}\Phi,1\rangle=\langle\Phi,T_{D_q}1\rangle=\langle\Phi,1\rangle$
for all $\Phi\in{\rm L}^2(\SL_d(\Z)\bs\SL_d(\R))$,
i.e.,
\begin{align}\label{E2qtreatment5aux}
\int_{\scrF_d}\bigl[T^*_{D_q}\Phi\bigr](g)\,d\mu_0(g)=\int_{\scrF_d}\Phi(g)\,d\mu_0(g).
\end{align}
It follows that \eqref{E2qtreatment5aux} also holds as a relation in
$\R_{\geq0}\cup\{+\infty\}$, for any left $\SL_d(\Z)$-invariant 
Borel measurable function $\Phi:\SL_d(\R)\to\R_{\geq0}$.
Using this fact in \eqref{E2qtreatment5} we conclude:
\begin{align}\label{gwconcl}
\bigl[T^*_{D_{q/\ell}}\Phi^{(\lambda)}_{a,b}\bigr](I_d)
\leq
\frac{2^\lambda}{\int_{\Omega}d\mu_0(w)}\int_{\scrF_d}\Phi^{(\lambda)}_{a,b}(g)\,d\mu_0(g),
\end{align}
and by \autoref{L1boundprop} (and since we are assuming $\lambda>nd$)
this implies
\begin{align}\label{gwconclnew}
\bigl[T^*_{D_{q/\ell}}\Phi^{(\lambda)}_{a,b}\bigr](I_d)
\ll_{\lambda}
a^{-1}\Bigl(\frac ab\Bigr)^{\!\frac d{\lambda}}\Bigl(1+\frac ab\Bigr)^{(n-1)\frac d{\lambda}}.
\end{align}

Using \eqref{GLnZmodqbound}
and \eqref{gwconclnew}
in \eqref{E2qtreatment4},
we obtain:
\begin{equation}
\left|E_{2, q}(f;N) \right|
\ll_{\lambda,\ve}
S_{\infty,\lambda}(f) 
q^{-\vartheta+2\ve}
\sum_{\ell\mid q}\ell^{\vartheta}\cdot
\bigl(1+\|N\|_\infty^{\lambda}\bigr)^{\frac d{\lambda}-1}\ell^{-n}\biggl(1+\frac{1+\|N\|_\infty^{\lambda}}{\ell^{\lambda n/d}}\biggr)^{(n-1)\frac d{\lambda}}.
\end{equation}
Hence, using also \eqref{E2qassum} and
\eqref{countinginMdnZ}
(with $m=a-1$), we have
\begin{align*}
\left|E_{2, q}(f) \right|
\ll_{\lambda,\ve}
S_{\infty,\lambda}(f) 
q^{-\vartheta+2\ve}
\sum_{\ell\mid q}\ell^{\vartheta-n}\cdot
\sum_{a=1}^\infty a^{dn-1}\cdot a^{d-\lambda}
\biggl(1+\frac{a}{\ell^{n/d}}\biggr)^{(n-1)d}.
\end{align*}
Here we must require $\lambda>2dn$ in order for the 
sum over $a$ to converge.
\textit{Assuming} $\lambda>2dn$,
the sum over $a$ is bounded independently of $\ell$, since
\begin{align*}
\sum_{a=1}^\infty a^{dn-1}\cdot a^{d-\lambda}
\biggl(1+\frac{a}{\ell^{n/d}}\biggr)^{(n-1)d}
\leq \sum_{a=1}^\infty (a+1)^{2dn-1-\lambda}
\ll_{\lambda} 1.
\end{align*}
Hence, using also $\vartheta<n$, we obtain:
\begin{align}\label{E2qtreatment6}
\left|E_{2, q}(f) \right|
\ll_{\lambda,\ve}
S_{\infty,\lambda}(f) 
q^{-\vartheta+3\ve}.
\end{align}
Setting here
$\ve=\frac13\ve_{\new}$
and then choosing $\lambda=\kappa+\ve_{\new}$ (recall that $\kappa=2dn$),
the bound becomes $S_{\infty,\kappa+\ve}(f)q^{-\vartheta+\ve}$.
Note that this bound subsumes the one in 
\eqref{E1concl}, 
since $\vartheta<n$ and $dn+1<\kappa$.
Hence, recalling that 
$\scrA_q(f)=E_{0,q}(f)+E_{1,q}(f)+E_{2,q}(f)$
(see \eqref{e:scrSqf_decomp})
and using \eqref{e:separating_main_scrSqf},
\eqref{E1concl}
and \eqref{E2qtreatment6},
we obtain \eqref{MAINTHMres},
i.e.\ we have proved \autoref{MAINTHM} in the case $n<d$.
\hfill$\square$

\newpage

\section{An application and a by-product}
In this section, we illustrate how both our result and its proof can be used to prove statements about solutions of Diophantine equations over a finite field $\F_p$, $p \geq 3$ prime, in small boxes (see also \cite{Shparlinski2015} for a comprehensive survey).

Indeed, Section \ref{ss:smallsolutions} %
is an application of our main theorem, \autoref{MAINTHM}, to estimating the probability that a randomly chosen (according to a rather general probability measure) system of affine congruences has a given number of small solutions.

In Section \ref{ss:byprod},  %
which is rather an application of the technique introduced in the proof of the main theorem, we give a sharp upper bound -- and prove the corresponding lower bound in a much more elementary way -- for the number of $\F_p$-points in small boxes on the variety of the set of (rectangular) matrices with a given rank.

\subsection{Application: small solutions of linear congruences}\label{ss:smallsolutions}

Let $p$ be an odd prime,
and consider the affine variety $V\subset\mathbb{A}^d$ 
defined by the system of equations
\begin{align}\label{congrsyst}
f_1(x_1,\ldots,x_d)=\cdots=f_n(x_1,\ldots,x_d)=0,
\end{align}
where $f_1,\ldots,f_n$ are polynomials in $\F_p[X_1,\ldots,X_d]$.
An important question is %
what can be said about existence
of $\F_p$-points of $V$, or about the number of $\F_p$-points of $V$,
inside a small ``box'' or more general small domain in $\F_p^d$; %
see, e.g., the recent survey \cite{Shparlinski2015}. %
In particular, a much studied problem is 
how small \textit{integer} solutions the system \eqref{congrsyst} has.
For a random choice of polynomials $f_1,\ldots,f_n$,
one expects the size of the smallest integer solution to typically be of
size comparable to $p^{n/d}$.

In \cite{StrombergssonVenkatesh2005},
this question was studied for a random system of
linear %
congruences.
It was proved in 
\cite{StrombergssonVenkatesh2005} that if 
the variety $V$ is taken uniformly random among
all linear, or all affine linear,
subspaces of $\F_p^d$ of codimension $n$,
then for any given nice subset $\Omega$ of $\R^d$,
as $p\to\infty$, 
there exists an explicit limit distribution
for the number of integer points which lie in $p^{n/d}\Omega$
(viz., are "small")
and which project 
to points in $V$.

In \autoref{smallsolTHM} below we prove a variant 
of these results, %
where instead 
the linear polynomials $f_1,\ldots,f_n$ in \eqref{congrsyst}
are taken random with respect to a given %
probability measure of a fairly general type: %
We %
take the constant terms of
$f_1,\ldots,f_n$ to be arbitrary \textit{fixed} integers $b_1,\ldots,b_n$,
while the %
tuple of 
degree one coefficients is chosen uniformly random
among all points $R$ in $\F_p^{dn}$
such that $p^{-1}R$ belongs to a given (nice) subset $U$ of the torus $(\R/\Z)^{dn}$
and the equations are linearly independent. 
In other words, we ask about the number of %
integer solutions $\vecx\in\Z^d$ of size $\ll p^{n/d}$ 
to the congruence equation
$\vecx R\equiv\vecb\mod p$,
for fixed $\vecb\in\Z^n$ and $R$ chosen 
uniformly random in the set $\{R\in\scrR_p\col p^{-1}R\in U\}$.
We will prove that, %
for any given nice subset $\Omega$ of $\R^d$, there exists an explicit limit distribution for the number of such solutions $\vecx$ in $\Z^d\cap p^{n/d}\Omega$, as $p\to\infty$.
In fact, our proof allows the modulus $p$ to run through all \textit{integers},
and we will state the theorem in this form, writing $q$ in place of $p$.

We say that a subset $\Omega$ of Euclidean space $\R^m$ or of the torus $(\R/\Z)^m$ ($m\geq1$) 
is \textit{smooth} if $\vol(\partial_{\ve}\Omega)\ll\ve$ as $\ve\to0$,
where $\partial_{\ve}\Omega$ is the $\ve$-neighborhood of the boundary of $\Omega$.
In the following we will view $\M_{d\times n}(\Z/q\Z)$ as a subset of 
$(\R/q\Z)^{dn}$; this means that for any subset $U\subset(\R/\Z)^{dn}$,
we can write $\scrR_q\cap qU$ for the set of all $R\in\scrR_q$ satisfying $q^{-1}R\in U$.

\begin{theorem}\label{smallsolTHM}
Let $d>n\geq1$; 
let $U$ be a smooth subset of $(\R/\Z)^{dn}$ of positive volume;
let $\Omega$ be a smooth and bounded subset of $\R^d$;
let $\vecb\in\Z^n$, and let $\ve>0$.
If $\vecb=\bn$ then we assume that $\Omega$ contains
a neighborhood of the origin.
Then for any $r\in\Z_{\geq0}$ there exists a constant 
$c(\Omega,\vecb,r)\geq0$ such that,
for any positive integer $q$,
the number of $R\in\scrR_q\cap qU$ such that the congruence equation
$\vecx R\equiv\vecb\mod q$ has exactly $r$ solutions 
$\vecx$ in $\Z^d\cap q^{n/d}\Omega$
is
\begin{align}\label{smallsolTHMres1}
\#\bigl(\scrR_q\cap qU\bigr) %
\cdot \bigl(c(\Omega,\vecb,r)+O_{U,\Omega,\vecb,r,\ve}(q^{-\alpha+\ve})\bigr),
\end{align}
where $\alpha=\alpha(d,n)=
{\displaystyle \min\Bigl(\frac{n-1}{1+2dn},\frac{n}{d^2},\frac{d-n}{d^2}\Bigr)}$
if $n\geq2$, and 
$\alpha(d,1)={\displaystyle\min\Bigl(\frac{1}{2+4d},\frac1{d^2}\Bigr)}$.
\end{theorem}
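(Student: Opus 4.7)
The plan is to interpret the count
\begin{equation*}
N_q(R) := \#\{\vecx \in \Z^d \cap q^{n/d}\Omega \col \vecx R \equiv \vecb \mod q\}
\end{equation*}
as the value at $\tn_+(q^{-1}R)D(q)$ of a fixed bounded function on $\Gamma\bs\Gamma{\rm H}$, and then to apply \autoref{MAINTHM} to a smooth sandwich for $\mathbf{1}_U \otimes \mathbf{1}_{\{F_\Omega = r\}}$. Writing $R = \gamma^{-1}\scmatr{\bn}{U}$ via \autoref{lem:scrRq_parameter} and invoking \autoref{lem:mtx_relation}, the point $\tn_+(q^{-1}R)D(q)$ has coordinates $(g, X) = (D_q\gamma, q^{-1}S)$ in the parametrisation \eqref{Hparametrization} of ${\rm H}$, with $S = \bigl(\bn\ U^{-1}\bigr)$. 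From the proof of \autoref{fScharLEM}, the lattice $K_R := \{\veca \in \Z^d \col \veca R \equiv \bn \mod q\}$ rescales to the unimodular lattice $\Lambda_{(g, X)} := q^{-n/d}K_R = \Z^d g$; and the identity $S\gamma R \equiv I_n \mod q$ (immediate from $\gamma R = \scmatr{\bn}{U}$) shows that $\vecx_0 := \vecb S\gamma$ is an integer solution of $\vecx_0 R \equiv \vecb \mod q$ whose rescaled version $q^{-n/d}\vecx_0$ equals $\vecb X g$. Hence
\begin{equation*}
N_q(R) = F_\Omega\bigl(\tn_+(q^{-1}R)D(q)\bigr), \qquad F_\Omega(g, X) := \#\bigl(\Z^d g \cap (\Omega - \vecb X g)\bigr),
\end{equation*}
and one verifies that $F_\Omega$ descends to a well-defined bounded measurable function on $\Gamma\bs\Gamma{\rm H}$: replacing $X$ by $X + M$ with $M \in \M_{n\times d}(\Z)$ shifts $\vecb X g$ by $\vecb M g \in \Z^d g$, while left multiplication by $\gamma_0 \in \SL_d(\Z)$ preserves the lattice.

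Under this dictionary,
\begin{equation*}
\#\{R \in \scrR_q \cap qU \col N_q(R) = r\} = \#\scrR_q \cdot \scrA_q\bigl(\mathbf{1}_U \otimes \mathbf{1}_{\{F_\Omega = r\}}\bigr),
\end{equation*}
and similarly $\#(\scrR_q \cap qU) = \#\scrR_q \cdot \scrA_q(\mathbf{1}_U \otimes 1)$, so with the choice $c(\Omega, \vecb, r) := \mu_{\rm H}(\{F_\Omega = r\})$ the theorem reduces to the effective equidistribution estimate $\scrA_q(\mathbf{1}_U \otimes \mathbf{1}_{\{F_\Omega = r\}}) = \vol(U)\,\mu_{\rm H}(\{F_\Omega = r\}) + O(q^{-\alpha + \ve})$.

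To establish the latter I would smooth both indicators at a common scale $\eta > 0$: sandwich $\mathbf{1}_U$ between $\phi_{U,\eta}^\pm \in C^\infty$ with $\int(\phi^+_{U,\eta}-\phi^-_{U,\eta})\,dT \ll \eta$; replace $\mathbf{1}_\Omega$ by smooth $\chi_{\Omega,\eta}^\pm$ supported in the $\eta$-thickening of $\Omega$; set $F_{\Omega,\eta}^\pm(g, X) := \sum_{\vecv \in \Z^d g}\chi_{\Omega,\eta}^\pm(\vecv + \vecb X g)$; and compose with smoothed step functions $\rho_{[r]}, \rho_{(r+1)}$ of transition width $\eta$ that approximate $\mathbf{1}_{[r,\infty)}$ from above and $\mathbf{1}_{[r+1,\infty)}$ from below. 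Using $\mathbf{1}_{\{F_\Omega = r\}} = \mathbf{1}_{\{F_\Omega\ge r\}} - \mathbf{1}_{\{F_\Omega\ge r+1\}}$ together with $F_{\Omega,\eta}^-\leq F_\Omega\leq F_{\Omega,\eta}^+$ this produces smooth upper and lower bounds $\Psi^\pm$ for $\mathbf{1}_U\otimes\mathbf{1}_{\{F_\Omega=r\}}$. The sandwich gap $\int(\Psi^+-\Psi^-)\,dT\,d\mu_{\rm H}$ is $O(\eta)$: the $U$-smoothing contributes $O(\eta)$ by the smoothness of $U$, and the $\Omega$-smoothing contributes the $\mu_{\rm H}$-integral of $F_{\Omega,\eta}^+ - F_{\Omega,\eta}^-$ averaged over the torus variable $X$, which for $\vecb\ne\bn$ unfolds over $X$ (as $X \mapsto \vecb X$ surjects onto $(\R/\Z)^d$ with Lebesgue pushforward) to $\int_{\R^d}(\chi^+_{\Omega,\eta} - \chi^-_{\Omega,\eta})\,d\vecu \ll \eta$, and for $\vecb=\bn$ is handled by Siegel's mean value theorem together with the hypothesis $\bn \in \Omega^\circ$, which forces $\chi^+_{\Omega,\eta}(\bn) = \chi^-_{\Omega,\eta}(\bn) = 1$ and therefore cancels the would-be $\vecv = \bn$ contribution. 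Feeding $\Psi^\pm$ into \autoref{MAINTHM}, with the estimates $S_{\infty,k}(\Psi^\pm) \ll \eta^{-k}$ and $S_{2,k}(\Psi^\pm) \ll \eta^{-k+1/2}$ (the $\eta^{1/2}$ gain reflecting that the derivatives land on the $\eta$-shells $\partial_\eta U$ and $\partial_\eta\Omega$), the total error becomes $\ll \eta + \eta^{-\kappa}q^{-\vartheta+\ve} + \eta^{-\kappa'+1/2}q^{-\vartheta'+\ve}$; choosing $\eta \asymp q^{-\alpha}$ optimally yields $\alpha = \min\bigl(\tfrac{\vartheta}{\kappa+1}, \tfrac{\vartheta'}{\kappa'+1/2}\bigr)$, which on substituting the values of $\kappa, \vartheta, \kappa', \vartheta'$ for $n < d$ from \autoref{MAINTHM} matches the exponent $\alpha(d, n)$ claimed in the theorem.

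The main obstacle is securing the above $\eta$-dependence of the Sobolev norms of $\Psi^\pm$ uniformly on the non-compact base $\SL_d(\Z)\bs\SL_d(\R)$, since the lattice sum defining $F_{\Omega,\eta}^\pm$ has arbitrarily many non-vanishing terms as the lattice degenerates in the cusp. The key point is that the outer smoothed step $\rho$ localises the support of $\Psi^\pm$ to values of $F_{\Omega,\eta}^\pm$ in a bounded interval, on which the number of non-vanishing lattice points (and hence the $L^\infty$ norm of every derivative of $F_{\Omega,\eta}^\pm$) is bounded and the derivatives of the composite stay under uniform control; the majorant estimate \autoref{L1boundprop} then supplies the integrability needed to bound the $L^2$ Sobolev norms of the localised lattice sums and obtain the $\eta^{1/2}$ gain used in the balancing above.
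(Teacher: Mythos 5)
Your reduction to the homogeneous space is sound and is essentially the paper's: the identity $N_q(R)=\#\bigl(\Z^dg\cap(\Omega-\vecb Xg)\bigr)$ with $(g,X)=(D_q\gamma,q^{-1}S)$ is exactly \autoref{smallsolTHMLEM1} (your $F_\Omega$ is $\mathbf{1}$-counting on the grid $\Z^d\tJ_{\vecb}(\tn_+(q^{-1}R)D(q))$), your constant $c=\mu_{\rm H}(\{F_\Omega=r\})$ agrees with \eqref{smallsolTHMres2} via \autoref{JBpushforwardLEM}, and your optimization $\eta\asymp q^{-\alpha}$ with $\alpha=\min\bigl(\tfrac{\vartheta}{\kappa+1},\tfrac{\vartheta'}{\kappa'+1/2}\bigr)$ reproduces the stated exponent. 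The divergence, and the gap, is in how you build the smooth sandwich for $\mathbf{1}_{\{F_\Omega=r\}}$.

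The step that fails is the claim $S_{\infty,k}(\Psi^\pm)\ll\eta^{-k}$. You argue that on $\supp\rho'$ one has $F^\pm_{\Omega,\eta}\leq r+O(1)$ and that this bounds the number of lattice points entering $\scrD F^\pm_{\Omega,\eta}$. That inference is false: a value $F^+_{\Omega,\eta}=\sum_{\vecv}\chi^+_{\Omega,\eta}(\vecv+\vecb Xg)\approx r$ is compatible with the grid containing arbitrarily many points in the outer part of the transition shell, each contributing a negligible amount to the sum but a full $\asymp\eta^{-1}$ to the gradient. Quantitatively: take $g$ in the cusp with a primitive vector of length $\epsilon$ whose line grazes $\partial_\eta\Omega$ at depth $s\eta$; then $\asymp\sqrt{s\eta}/\epsilon$ grid points lie where $\chi^+_{\Omega,\eta}\asymp s$ and $|\nabla\chi^+_{\Omega,\eta}|\asymp\eta^{-1}$, so one can keep $F^+_{\Omega,\eta}=O(1)$ while $|\scrD(\rho\circ F^+_{\Omega,\eta})|\to\infty$ as $s\to0$. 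The same cuspidal degeneration undermines your $S_{2,k}$ bound, whose $\eta^{1/2}$ gain additionally presupposes $\mu_{\rm H}\bigl(\{|F^\pm_{\Omega,\eta}-j|\ll\eta\}\bigr)\ll\eta$ — precisely the smoothness of the level sets of the counting function, which you have not established; \autoref{L1boundprop} controls integrals of majorants, not the measure of these thin neighbourhoods. The paper sidesteps all of this by smoothing on the homogeneous space rather than inside the lattice sum: it first proves that $\tJ_{\vecb}^{-1}(\tOmega_r)$ is a \emph{smooth} subset of $\Gamma\bs\Gamma{\rm H}$ (\autoref{JbinvtOmegarISSMOOTHlem} and \autoref{JbinvtOmegarISSMOOTHbzerolem}, which rest on the smoothness of $\tOmega_r$ from \cite[Lemma 10]{StrombergssonVenkatesh2005} together with a Lipschitz bound for $\tJ_{\vecb}$), and then applies \cite[Lemma 1]{StrombergssonVenkatesh2005} to produce sandwiching functions with $S_{\infty,k}\ll\delta^{-k}$ and $S_{2,k}\ll\delta^{\frac12-k}$ by convolution on the group. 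To repair your argument you would need to import that level-set smoothness (or prove it, together with a cuspidal truncation of $F^\pm_{\Omega,\eta}$, by hand); as written, the Sobolev estimates on which the final balancing depends are not justified.
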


In order to state the explicit formula for the 
limit probabilities $c(\Omega,\vecb,r)$ in \autoref{smallsolTHM}, %
let $\ASL_d(\R)$ be the affine special linear group of order $d$,
that is, $\ASL_d(\R)=\SL_d(\R)\ltimes\R^d$ with multiplication law
\begin{align*}
(g,\vecv)(g',\vecv')=(gg',\vecv g'+\vecv')
\qquad \bigl( g,g'\in\SL_d(\R),\:\vecv,\vecv'\in\R^d %
\bigr).
\end{align*}
(Note that $\ASL_d(\R)$ is isomorphic with our group ${\rm H}$ in the special case $n=1$.)
The group $\ASL_d(\R)$  %
acts on $\R^d$ from the right through
$\vecx(g,\vecv):=\vecx g+\vecv$ ($\vecx\in\R^d$).
We identify the homogeneous space $\ASL_d(\Z)\bs\ASL_d(\R)$
with the space of \textit{grids} (=translates of lattices) of covolume one in $\R^d$,
through $\ASL_d(\Z)g\leftrightarrow \Z^dg$ ($g\in\ASL_d(\R)$),
and we denote by $\mu$ the invariant probability measure on 
$\ASL_d(\Z)\bs\ASL_d(\R)$.
We take $\SL_d(\R)$ to be embedded in $\ASL_d(\R)$ through $g\mapsto(g,\bn)$;
thus $\SL_d(\Z)\bs\SL_d(\R)$ becomes identified in the standard way
with the space of lattices of covolume one in $\R^d$. %
Recall that $\mu_0$ denotes the 
invariant probability measure on
$\SL_d(\Z)\bs\SL_d(\R)$.
Now we have:
\begin{align}\label{smallsolTHMres2}
c(\Omega,\vecb,r)=\begin{cases}
\mu_0(\{g\in\SL_d(\Z)\bs\SL_d(\R)\col\#(\Z^dg\cap\Omega)=r\})
&\text{if }\: \vecb=\bn;
\\
\mu(\{g\in\ASL_d(\Z)\bs\ASL_d(\R)\col\#(\Z^dg\cap\Omega)=r\})
&\text{if }\: \vecb\neq\bn.
\end{cases}
\end{align}
In particular note that for $\vecb\neq\bn$,
$c(\Omega,\vecb,r)$ is independent of $\vecb$!

\begin{remark}\label{SVspecREM}
The formulas for the limit probabilities in 
\eqref{smallsolTHMres2} are the same as those in
\cite{StrombergssonVenkatesh2005}.
In fact, in the case $\vecb=\bn$,
by specializing to $U=(\R/\Z)^{dn}$
and restricting $q$ to run through primes,
\autoref{smallsolTHM} gives back %
\cite[Theorem 2]{StrombergssonVenkatesh2005} but with a weaker error term.
Indeed, as $R$ runs through $\scrR_p$,
the set $\{\vecx\in\F_p^d\col \vecx R=\bn\}$ runs through all the linear subspaces of $\F_p^d$
of codimension $n$, visiting each such subspace exactly
$\prod_{j=1}^{n-1}(p^n-p^j)$ times.
Similarly, %
the limit result of
\cite[Theorem 3]{StrombergssonVenkatesh2005} (without an error term)
follows \textit{formally} by applying
\autoref{smallsolTHM} with $q=p$ prime, $U=(\R/\Z)^{dn}$,
and averaging over all $\vecb$ in $\F_p^n$;
this is of course not a rigorous deduction,
since $\vecb$ is required to be a fixed \textit{integer} vector in
\autoref{smallsolTHM},
and the error term in \eqref{smallsolTHMres1} is allowed to depend on $\vecb$
in an uncontrolled way.
\end{remark}

\begin{remark}\label{smallsolTHMgenREM}
As we will see, the proof of \autoref{smallsolTHM}
can easily be extended to
give the following more general statement:
Let $d,n,U$ be as in \autoref{smallsolTHM};
let $k\in\Z^+$, 
and let $\Omega_1,\ldots,\Omega_k$ be smooth
and bounded subsets of $\R^d$.
Let $\vecb_1,\ldots,\vecb_k\in\Z^n$;
for each $j$ such that $\vecb_j=\bn$,
we assume that $\Omega_j$ contains a neighbourhood of the origin.
Let $r_1,\ldots,r_k\in\Z_{\geq0}$.
Then for any $q\in\Z^+$,
the number of $R\in\scrR_q\cap qU$ such that for each $j=1,\ldots,k$,
the equation
$\vecx R\equiv\vecb_j\mod q$ has exactly $r_j$ solutions 
$\vecx\in\Z^d\cap q^{n/d}\Omega_j$, is
\begin{align}\label{smallsolTHMgenREMres}
\#\bigl(\scrR_q\cap qU\bigr) \cdot \bigl(c+O(q^{-\alpha+\ve})\bigr),
\end{align}
where $\alpha=\alpha(d,n)$ is as before,
$c\in\R_{\geq0}$ is a constant which depends on 
$\vecb_1,\ldots,\vecb_k$,
$\Omega_1,\ldots,\Omega_k$
and $r_1,\ldots,r_k$
(see \eqref{smallsolTHMgenREMresCONST} below),
and where the implied constant in the ``big $O$''
may depend on
$U$, $\vecb_1,\ldots,\vecb_k,\Omega_1,\ldots,\Omega_k,r_1,\ldots,r_k,\ve$.
\end{remark}

\begin{remark}\label{smallsolTHMneqdREM}
In the case $d=n$ %
we have $\scrR_q=\GL_n(\Z/q\Z)$
for every $q\in\Z^+$,
and hence for any $\vecb\in\Z^n$,
the equation $\vecx R\equiv\vecb\mod q$ has a unique
solution $\vecx=\vecb R^{-1}$ in $(\Z/q\Z)^n$.
We now have the following result analogous to \autoref{smallsolTHM}:
For any smooth subsets $U\subset (\R/\Z)^{n^2}$
and $\Omega\subset(\R/\Z)^n$ of positive volume,
and any $\vecb\in\Z^n\setminus\{\bn\}$ and 
$q\in\Z^+$, %
the number of $R\in\scrR_q\cap qU$
such that $\vecx=\vecb R^{-1}$ lies in $q\Omega$ equals
\begin{align*}
\#\bigl(\scrR_q\cap qU\bigr)\vol(\Omega)(1+O_{U,\Omega,\vecb,\ve}(q^{-\beta+\ve})),
\end{align*}
where $\beta=\beta(n)=\frac{n-1}{1+2n^2}$ if $n\geq2$, and $\beta(1)=\frac16$.
We give the proof at the end of the present section.
In analogy with Remark \ref{smallsolTHMgenREM},
the above result may also be generalized into an asymptotic formula 
for the number of 
$R\in\scrR_q\cap qU$
such that $\vecx=\vecb_j R^{-1}$ lies in $q\Omega_j$ for each $j=1,\ldots,k$.
\end{remark}

We now start preparing for the proof of \autoref{smallsolTHM}.
For each $\vecb\in\R^n$ we denote by $J_{\vecb}$ the following 
Lie group homomorphism:
\begin{align}\label{JbDEF}
J_{\vecb}:{\rm H}\to\ASL_d(\R),\qquad J_{\vecb}\matr Z{\bn}V{I_n}=(Z,\vecb V).
\end{align}
If $\vecb\in\Z^n$ then $J_{\vecb}(\Gamma\cap {\rm H})\subset\ASL_d(\Z)$,
and hence $J_{\vecb}$ induces a smooth map
\begin{align*}
\tJ_{\vecb}:\Gamma\bs\Gamma {\rm H}\to\ASL_d(\Z)\bs\ASL_d(\R).
\end{align*}

\begin{lemma}\label{smallsolTHMLEM1}
For any $q\in\Z^+$, $R\in\scrR_q$, $\vecb\in\Z^n$,
and any subset $\Omega\subset\R^d$, 
the number of solutions $\vecx\in\Z^d\cap q^{n/d}\Omega$
to the congruence equation
$\vecx R\equiv\vecb\mod q$ equals
$\#\bigl(\Z^d \tJ_{\vecb}\bigl(\tn_+(q^{-1}R)D(q)\bigr)\cap\Omega\bigr)$.
\end{lemma}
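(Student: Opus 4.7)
The plan is to pick any $h\in {\rm H}$ with $\Gamma h = \Gamma\, n_+(q^{-1}R)D(q)$ (such an $h$ exists by \autoref{fScharLEM}, since $R\in\scrR_q$ forces $\tn_+(q^{-1}R)D(q)$ to lie in $\fS_q\subset\Gamma\bs\Gamma {\rm H}$), and then to compute the lattice $L:=\Z^{d+n}\, n_+(q^{-1}R)D(q)\subset\R^{d+n}$ in two different ways. The key observation will be that the ``slice'' $\{\vecz\in\R^d\col (\vecz,\vecb)\in L\}$ can be read off both as the rescaled solution set appearing on the left-hand side of the claim, and as the grid $\Z^d J_{\vecb}(h)$ appearing on the right-hand side.

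For the first computation, a direct matrix multiplication gives
\begin{equation*}
(\vecx,\vecy)\, n_+(q^{-1}R)D(q) = \bigl(q^{-n/d}\vecx,\; \vecx R + q\vecy\bigr)
\qquad(\vecx\in\Z^d,\ \vecy\in\Z^n),
\end{equation*}
so the slice at second component $\vecb$ equals exactly $\{q^{-n/d}\vecx\col \vecx\in\Z^d,\ \vecx R\equiv \vecb\mod q\}$.

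For the second computation, write $h=\bpm g_0 & \bn \\ V & I_n\ebpm$, so that by definition $J_{\vecb}(h)=(g_0,\vecb V)$. Since $h$ differs from $n_+(q^{-1}R)D(q)$ by left-multiplication by an element of $\Gamma$, which preserves $\Z^{d+n}$, we have $L=\Z^{d+n}h$; computing
\begin{equation*}
(\vecx,\vecy)\, h = \bigl(\vecx g_0 + \vecy V,\; \vecy\bigr),
\end{equation*}
the same slice becomes $\Z^d g_0 + \vecb V = \Z^d\, J_{\vecb}(h)$, i.e.\ exactly the grid in $\R^d$ associated to $\tJ_{\vecb}(\tn_+(q^{-1}R)D(q))\in\ASL_d(\Z)\bs\ASL_d(\R)$. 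Independence of the choice of lift $h$ is automatic, since any other choice differs by an element of $\Gamma\cap {\rm H}$ and $J_{\vecb}$ sends $\Gamma\cap {\rm H}$ into $\ASL_d(\Z)$, which preserves the grid.

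Equating the two descriptions of the slice and rescaling by the bijection $\vecx\mapsto q^{-n/d}\vecx$ (so that $\vecx\in q^{n/d}\Omega\iff q^{-n/d}\vecx\in\Omega$) yields the claimed equality of cardinalities. The argument is purely a careful unpacking of the definitions of $\tn_+$, $D(q)$, the parametrization \eqref{Hparametrization} of ${\rm H}$, and the homomorphism $J_{\vecb}$; I do not anticipate any substantive obstacle.
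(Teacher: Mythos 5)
Your proof is correct, and it takes a somewhat different route from the paper's. Both arguments start from the same relation $h=\gamma\, n_+(q^{-1}R')D(q)$ with $\gamma\in\Gamma$ and $h\in{\rm H}$, but the paper then unpacks $\gamma=\matr ABCD$ blockwise: it shows $AR'+qB=\bn$ and $CR'+qD=I_n$, identifies $\Z^dA$ with the kernel of $\vecx\mapsto[\vecx R'\bmod q]$ via a separate index argument (both have index $q^n$ in $\Z^d$, using $\det A=q^n$ and the surjectivity coming from $R\in\scrR_q$), and concludes that the solution set is $\Z^dA+\vecb C$ while the grid is $q^{-n/d}(\Z^dA+\vecb C)$. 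Your slice argument — computing $\{\vecz\in\R^d\col(\vecz,\vecb)\in\Z^{d+n}\,n_+(q^{-1}R)D(q)\}$ once in the $n_+(q^{-1}R)D(q)$ coordinates and once in the $h$ coordinates — reaches the same conclusion while bypassing the kernel/index step entirely, since the first coordinate computation hands you the full solution set directly rather than only the homogeneous part $\Z^dA$. The trade-off is purely one of presentation: the paper's version makes the matrices $A$ and $C$ (hence the structure lattice-plus-translate) explicit, whereas yours is shorter and more conceptual. Your treatment of the choice of lift $h$ (any two differ by an element of $\Gamma\cap{\rm H}$, which $J_{\vecb}$ sends into $\ASL_d(\Z)$) correctly handles the well-definedness of the right-hand side.
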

(Here for any $\alpha\in\ASL_d(\Z)\bs\ASL_d(\R)$
we write ``$\Z^d \alpha$'' for the corresponding grid;
thus $\Z^d \alpha:=\Z^d g$ for any $g\in\ASL_d(\R)$ such that $\alpha=\ASL_d(\Z) g$.)
\begin{proof}
Let $R'$ be a lift of $R$ to $\M_{d\times n}(\Z)$.
We know from \autoref{fScharLEM} %
that $\tn_+(q^{-1}R)D(q)\in\Gamma\bs\Gamma {\rm H}$;
hence there exists some
$\gamma\in\Gamma$ 
such that $\gamma\, n_+(q^{-1}R')D(q)\in {\rm H}$.
Writing $\gamma=\matr ABCD$ 
(with $A\in\M_d(\Z)$, $B\in\M_{d\times n}(\Z)$, $C\in\M_{n\times d}(\Z)$, $D\in\M_{n}(\Z)$),
we then have   %
\begin{align}\label{smallsolTHMLEM1pf1}
\matr ABCD \matr{q^{-\frac nd}I_d}{R'}{\bn}{qI_n}
=\matr{q^{-\frac nd}A}{\bn}{q^{-\frac nd}C}{I_n}.
\end{align}
In particular we have $AR'+qB=\bn$ in $\M_{d\times n}(\mathbb{Z})$,
and this implies that the lattice $\Z^dA$ is contained in the kernel of the homomorphism
$\vecx\mapsto[\vecx R'\mod q]$ from $\Z^d$ to $\Z^n/q\Z^n$.
This homomorphism is surjective since $R\in\scrR_q$;
hence the kernel is a subgroup of index $q^n$ in $\Z^d$;
furthermore, \eqref{smallsolTHMLEM1pf1} implies $\det A=q^n$,
so that also $\Z^dA$ has index $q^n$ in $\Z^d$.
Hence $\Z^dA$ in fact \textit{equals} the kernel:
\begin{align}\label{smallsolTHMLEM1pf2}
\Z^dA=\{\vecx\in\Z^d\col \vecx R'\equiv \bn\mod q\}.
\end{align}
Also from \eqref{smallsolTHMLEM1pf1} we have
$CR'+qD=I_n$; hence $\vecb CR'\equiv\vecb\mod q$.
This fact combined with \eqref{smallsolTHMLEM1pf2} implies
\begin{align*}
\{\vecx\in\Z^d\col \vecx R\equiv \vecb\mod q\}=\Z^d A+\vecb C.
\end{align*}
Note also that \eqref{smallsolTHMLEM1pf1} implies
$\Z^d\tJ_{\vecb}\bigl(\tn_+(q^{-1}R)D(q)\bigr)
=q^{-\frac nd}(\Z^dA+\vecb C).$
The lemma follows from the last two facts.
\end{proof}

\begin{lemma}\label{JBpushforwardLEM}
For any $\vecb\in\Z^n\setminus\{\bn\}$,
$\mu_{\rm H}\circ \tJ_{\vecb}^{-1}=\mu$.
On the other hand, for $\vecb=\bn$ we have
$\tJ_{\bn}(\Gamma\bs\Gamma {\rm H})=\SL_d(\Z)\bs\SL_d(\R)$
and $\mu_{\rm H}\circ\tJ_{\bn}^{-1}=\mu_0$.
\end{lemma}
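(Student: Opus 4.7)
My plan is to split the proof naturally into the two cases $\vecb = \bn$ and $\vecb \neq \bn$. In both, I will exploit the fact that $J_{\vecb}: {\rm H} \to \ASL_d(\R)$ is a Lie group homomorphism; this is a quick direct check, since
\begin{equation*}
(Z_1, \vecb V_1)(Z_2, \vecb V_2) = (Z_1 Z_2, \vecb V_1 Z_2 + \vecb V_2)
\end{equation*}
matches $J_{\vecb}$ applied to $\sm Z_1 & \bn \\ V_1 & I_n \esm \sm Z_2 & \bn \\ V_2 & I_n \esm = \sm Z_1 Z_2 & \bn \\ V_1 Z_2 + V_2 & I_n \esm$. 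The key additional structural fact needed in the second case is that $J_{\vecb}$ is \emph{surjective} whenever $\vecb \neq \bn$, which follows because the linear map $V \mapsto \vecb V$ from $\M_{n \times d}(\R)$ to $\R^d$ is then surjective.

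For $\vecb = \bn$, I would argue directly. Under the parametrization $(g, X) \leftrightarrow \sm g & \bn \\ Xg & I_n \esm$ of ${\rm H}$ from \eqref{Hparametrization}, $J_{\bn}$ sends $(g, X)$ to $(g, \bn)$, while $\Gamma \cap {\rm H}$ corresponds to $\SL_d(\Z) \times \M_{n \times d}(\Z)$. Hence $\tJ_{\bn}$ is the composition of the torus bundle projection $\Gamma \bs \Gamma {\rm H} \to \SL_d(\Z) \bs \SL_d(\R)$ (with fiber $(\R/\Z)^{nd}$) with the natural inclusion $\SL_d(\Z) \bs \SL_d(\R) \hookrightarrow \ASL_d(\Z) \bs \ASL_d(\R)$; in particular its image is exactly $\SL_d(\Z) \bs \SL_d(\R)$. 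Since $\mu_{\rm H}$ disintegrates locally as $\mu_0$ on the base times the unit-volume Haar measure on the torus fibers, pushing forward under $\tJ_{\bn}$ integrates out the torus coordinate and produces $\mu_0$.

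For $\vecb \neq \bn$, I plan to invoke uniqueness of the invariant probability measure. The surjectivity of $J_{\vecb}$ descends to surjectivity of $\tJ_{\vecb} : \Gamma \bs \Gamma {\rm H} \to \ASL_d(\Z) \bs \ASL_d(\R)$, and the homomorphism property gives the equivariance $\tJ_{\vecb}(x h) = \tJ_{\vecb}(x) J_{\vecb}(h)$. Given any $\alpha \in \ASL_d(\R)$, surjectivity yields $h \in {\rm H}$ with $J_{\vecb}(h) = \alpha$, and hence $\tJ_{\vecb}^{-1}(B \alpha) = \tJ_{\vecb}^{-1}(B) h$ for every Borel set $B$; combined with the right-${\rm H}$-invariance of $\mu_{\rm H}$, this shows $(\tJ_{\vecb})_* \mu_{\rm H}$ is a right-$\ASL_d(\R)$-invariant Borel probability measure on $\ASL_d(\Z) \bs \ASL_d(\R)$, and by uniqueness it must equal $\mu$. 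I do not expect any step to present a real obstacle; the whole content is to recognize that the argument reduces to an application of the uniqueness of Haar measure.
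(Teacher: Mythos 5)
Your proof is correct, but for the case $\vecb\neq\bn$ it takes a genuinely different route from the paper. The paper argues explicitly: it writes down disintegration formulas for $\mu_{\rm H}$ and $\mu$ as integrals over a fundamental domain $\scrF_d\subset\SL_d(\R)$ of the fiber volumes in $\M_{n\times d}(\R/\Z)$ resp.\ $(\R/\Z)^d$, and then reduces the identity $\mu_{\rm H}\circ\tJ_{\vecb}^{-1}=\mu$ to the single fact that the map $X\mapsto\vecb X$ from $\M_{n\times d}(\R/\Z)$ to $(\R/\Z)^d$ is measure preserving (which holds because, for $\vecb$ a nonzero integer vector, it is a surjective homomorphism of compact tori). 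You instead observe that $J_{\vecb}$ is a surjective Lie group homomorphism ${\rm H}\to\ASL_d(\R)$ carrying $\Gamma\cap{\rm H}$ into $\ASL_d(\Z)$, deduce the equivariance $\tJ_{\vecb}(xh)=\tJ_{\vecb}(x)J_{\vecb}(h)$ and hence $\tJ_{\vecb}^{-1}(B\alpha)=\tJ_{\vecb}^{-1}(B)h$ whenever $J_{\vecb}(h)=\alpha$, so that $(\tJ_{\vecb})_*\mu_{\rm H}$ is a right-$\ASL_d(\R)$-invariant Borel probability measure, which must equal $\mu$ by uniqueness of the invariant probability measure on $\ASL_d(\Z)\bs\ASL_d(\R)$ ($\ASL_d(\R)$ being unimodular with $\ASL_d(\Z)$ a lattice). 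Your argument is softer and more general — it would apply verbatim to any surjective homomorphism between such pairs — at the cost of invoking the uniqueness theorem; the paper's computation is more hands-on and records the explicit fiber-integration formulas along the way. The roles of the hypotheses are the same in both: integrality of $\vecb$ makes $\tJ_{\vecb}$ well defined, and $\vecb\neq\bn$ gives surjectivity (of the torus map in the paper, of $J_{\vecb}$ in your version). Your treatment of the case $\vecb=\bn$ coincides with the paper's.
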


\begin{proof}
As before, let $\scrF_d\subset\SL_d(\R)$ be a fundamental domain for 
$\SL_d(\Z)\bs\SL_d(\R)$;
also let $\tmu_0$ be the Haar measure on $\SL_d(\R)$
which induces the measure $\mu_0$ on $\SL_d(\Z)\bs\SL_d(\R)$.
Then the measure $\mu_{\rm H}$ can be explicitly described as follows:
For any Borel set $B\subset\Gamma\bs\Gamma {\rm H}$,
\begin{align}\label{JBpushforwardLEMpf1}
\mu_{\rm H}(B)=\int_{\scrF_d}\vol\biggl(\biggl\{X\in\M_{n\times d}(\R/\Z)\col \tn_-(X)\matr g{\bn}{\bn}{I_n}\in B\biggr\}\biggr)
\,d\tmu_0(g),
\end{align}
where $\vol$ is the Lebesgue measure on the torus $\M_{n\times d}(\R/\Z)\cong(\R/\Z)^{dn}$.
Similarly, if we introduce the map $\iota:(\R/\Z)^d\to\ASL_d(\Z)\bs\ASL_d(\R)$
by setting $\iota(\vecx):=\ASL_d(\Z)(I_d,\vecx')$ where $\vecx'$ is any lift to $\R^d$ of $\vecx\in(\R/\Z)^d$,
then for any Borel set $A\subset\ASL_d(\Z)\bs\ASL_d(\R)$,
\begin{align}\label{JBpushforwardLEMpf2}
\mu(A)=\int_{\scrF_d}\vol\bigl(\bigl\{\vecx\in(\R/\Z)^d\col \iota(\vecx)(g,\bn)\in A\bigr\}\bigr)\,d\tmu_0(g),
\end{align}
where now $\vol$ also denotes the Lebesgue measure on $(\R/\Z)^d$.
Assuming $\vecb\in\Z^d\setminus\{\bn\}$,
our task is to prove that 
\begin{align}\label{JBpushforwardLEMpf4}
\mu_{\rm H}(\tJ_{\vecb}^{-1}(A))=\mu(A)
\end{align}
holds for any Borel set $A\subset\ASL_d(\Z)\bs\ASL_d(\R)$.
This follows using the formulas
\eqref{JBpushforwardLEMpf1}
and \eqref{JBpushforwardLEMpf2},
together with the fact that
\begin{align*}
\tn_-(X)\matr g{\bn}{\bn}{I_n}\in\tJ_{\vecb}^{-1}(A)
\Leftrightarrow
\iota(\vecb X)\cdot(g,\bn)\in A
\qquad (\forall X\in\M_{n\times d}(\R/\Z),\: g\in\SL_d(\R)),
\end{align*}
and the fact that
for any Borel set $A'\subset(\R/\Z)^d$, 
\begin{align}\label{JBpushforwardLEMpf3}
\vol\bigl(\bigl\{X\in\M_{n\times d}(\R/\Z)\col \vecb X\in A'\bigr\}\bigr)=\vol(A').
\end{align}

In the remaining case, $\vecb=\bn$,
the statements of the lemma
are immediate from \eqref{JbDEF} and \eqref{JBpushforwardLEMpf1}.
\end{proof}

As in \cite{StrombergssonVenkatesh2005},
we introduce a notion of smoothness for subsets of arbitrary homogeneous spaces,
as follows:
Let $X=\Lambda\bs{L}$ where ${L}$ is a Lie group and $\Lambda$ a lattice in ${L}$,
and let $\tmu$ be the ${L}$-invariant probability measure on $X$.
(We will apply the following to the three cases 
$X=(\Gamma\cap {\rm H})\bs {\rm H}$, $X=\ASL_d(\Z)\bs\ASL_d(\R)$
and $X=\SL_d(\Z)\bs\SL_d(\R)$.)
We fix a left invariant Riemannian metric $\dd$ on ${L}$.
This metric descends to a Riemannian metric on $X=\Lambda\bs{L}$,
which we also denote by $\dd$,
and using this metric,
for any subset $\Omega\subset X$ and any $\ve>0$,
we define the $\ve$-neighborhood of the boundary of $\Omega$,
\begin{align*}
\partial_{\ve}\Omega:=\bigl\{p\in X\col \bigl[\exists q\in \partial\Omega\:\text{ s.t. }\dd(p,q)<\ve\bigr]\bigr\}.
\end{align*}
Now the set $\Omega$ is said to be \textit{smooth}
if $\tmu(\partial_{\ve}\Omega)\ll\ve$ as $\ve\to0$.

Next, for any subset $\Omega\subset\R^d$ and any $r\in\Z_{\geq0}$,
we let $\tOmega_r$ be the subset of $\ASL_d(\Z)\bs\ASL_d(\R)$ corresponding to those grids
of covolume one in $\R^d$
which intersect $\Omega$ in exactly $r$ points, viz.,
\begin{align}\label{tOmegarDEF}
\tOmega_r=\bigl\{\ASL_d(\Z)g\col g\in\ASL_d(\R),\:\#(\Z^dg\cap\Omega)=r\bigr\}.
\end{align}
\begin{lemma}\label{JbinvtOmegarISSMOOTHlem}
For any smooth subset $\Omega\subset\R^d$, any $r\in\Z_{\geq0}$
and any $\vecb\in\Z^n\setminus\{\bn\}$,
$\tJ_{\vecb}^{-1}(\tOmega_r)$ is a smooth subset of $\Gamma\bs\Gamma {\rm H}$.
\end{lemma}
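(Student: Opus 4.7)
The plan is to push the problem forward through $\tJ_{\vecb}$ and reduce it to the smoothness of $\tOmega_r$ inside $\ASL_d(\Z)\bs\ASL_d(\R)$, which in turn will follow from the smoothness of $\Omega$ via Siegel's mean value formula.

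First I would verify that $\tJ_{\vecb}$ is Lipschitz. Since $J_{\vecb}:{\rm H}\to\ASL_d(\R)$ is a Lie group homomorphism and both Riemannian structures are taken to be left-invariant, $J_{\vecb}$ is uniformly Lipschitz with some constant $C_1$ depending only on its differential at the identity. Because $\vecb\in\Z^n$ gives $J_{\vecb}(\Gamma\cap {\rm H})\subset\ASL_d(\Z)$, this bound descends to the quotient metrics, so $\tJ_{\vecb}$ is $C_1$-Lipschitz as a map $\Gamma\bs\Gamma {\rm H}\to\ASL_d(\Z)\bs\ASL_d(\R)$. Continuity of $\tJ_{\vecb}$ forces $\partial\tJ_{\vecb}^{-1}(\tOmega_r)\subseteq \tJ_{\vecb}^{-1}(\partial\tOmega_r)$, and combining with the Lipschitz property yields
\begin{align*}
\partial_{\ve}\bigl(\tJ_{\vecb}^{-1}(\tOmega_r)\bigr)\subseteq \tJ_{\vecb}^{-1}\bigl(\partial_{C_1\ve}\tOmega_r\bigr).
\end{align*}
Since $\vecb\neq\bn$, the pushforward identity $\mu_{\rm H}\circ\tJ_{\vecb}^{-1}=\mu$ from \autoref{JBpushforwardLEM} then reduces the task to showing $\mu(\partial_{\delta}\tOmega_r)\ll_{\Omega}\delta$.

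The heart of the matter is therefore the smoothness of $\tOmega_r$ in $\ASL_d(\Z)\bs\ASL_d(\R)$. A grid belongs to $\partial\tOmega_r$ only if some grid point sits on $\partial\Omega$, and perturbing such a grid slightly moves any fixed point by a controlled amount. Using Lipschitz bounds for the right action of $\ASL_d(\R)$ on $\R^d$, valid uniformly on the compact region where points can enter or leave the bounded set $\Omega$, one obtains an inclusion of the form
\begin{align*}
\partial_{\delta}\tOmega_r\subseteq \bigl\{\ASL_d(\Z)g\in\ASL_d(\Z)\bs\ASL_d(\R)\col \Z^dg\cap\partial_{C_2\delta}\Omega\neq\emptyset\bigr\}
\end{align*}
for some $C_2=C_2(\Omega)>0$. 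Siegel's mean value formula $\int\#(\Z^dg\cap B)\,d\mu(g)=\vol(B)$ on $\ASL_d(\Z)\bs\ASL_d(\R)$, combined with the assumed smoothness of $\Omega$, then yields
\begin{align*}
\mu\bigl(\partial_{\delta}\tOmega_r\bigr)\leq \int_{\ASL_d(\Z)\bs\ASL_d(\R)}\#\bigl(\Z^dg\cap\partial_{C_2\delta}\Omega\bigr)\,d\mu(g)=\vol(\partial_{C_2\delta}\Omega)\ll\delta.
\end{align*}

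The main obstacle will be justifying the geometric inclusion in the previous step: one must show carefully that a small perturbation in the quotient metric on $\ASL_d(\Z)\bs\ASL_d(\R)$ corresponds to a uniformly small perturbation of those finitely many grid points that lie near $\Omega$. Some care is required since the right action of $\ASL_d(\R)$ on $\R^d$ is not globally Lipschitz; this is handled by first passing to an $\ASL_d(\Z)$-representative chosen in a fixed compact transversal to the orbits in a neighborhood of the base point, which is enough because only the behavior of grid points inside a fixed bounded enlargement of $\Omega$ affects the count $\#(\Z^dg\cap\Omega)$.
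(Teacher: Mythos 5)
Your proposal is correct and follows essentially the same route as the paper: establish that $\tJ_{\vecb}$ is uniformly Lipschitz (hence $\partial_{\ve}\bigl(\tJ_{\vecb}^{-1}(\tOmega_r)\bigr)\subset\tJ_{\vecb}^{-1}(\partial_{C\ve}\tOmega_r)$), then transfer the bound via the pushforward identity $\mu_{\rm H}\circ\tJ_{\vecb}^{-1}=\mu$ and the smoothness of $\tOmega_r$. The only difference is that the paper simply cites \cite[Lemma 10]{StrombergssonVenkatesh2005} for the smoothness of $\tOmega_r$ in $\ASL_d(\Z)\bs\ASL_d(\R)$, whereas you sketch its proof via Siegel's mean value formula --- which is precisely the argument behind the cited lemma, so nothing is gained or lost.
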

\begin{proof}
As in \cite[Lemma 10]{StrombergssonVenkatesh2005},
one proves that $\tOmega_r$ is a smooth subset of $\ASL_d(\Z)\bs\ASL_d(\R)$.
Next, it is an immediate verification from \eqref{JbDEF}
that 
$\|d\!J_{\vecb}(\vecv)\|\leq C \|\vecv\|$
holds for any point $h\in {\rm H}$ and any tangent vector $\vecv\in T_h{\rm H}$,
where the two norms are
the Riemannian norms on $T_{J_{\vecb}(h)}(\ASL_d(\R))$,
and on $T_h{\rm H}$, respectively,
and where $C$ is a positive constant
which is independent of $h$ and $\vecv$.
It follows that
\begin{align*}
\dd(J_{\vecb}(h_1),J_{\vecb}(h_2))\leq C\,\dd(h_1,h_2),
\qquad\forall h_1,h_2\in {\rm H},
\end{align*}
and this, in turn, implies that
\begin{align*}
\dd(\tJ_{\vecb}(p_1),\tJ_{\vecb}(p_2))\leq C\,\dd(p_1,p_2),
\qquad\forall p_1,p_2\in(\Gamma\cap {\rm H})\bs {\rm H}.
\end{align*}

Now let $\ve>0$ be given. Then for any point $p\in \partial_{\ve}\bigl(\tJ_{\vecb}^{-1}(\tOmega_r)\bigr)$
there exist points $p_1,p_2\in\Gamma {\rm H}\bs {\rm H}$
satisfying $p_1\in \tJ_{\vecb}^{-1}(\tOmega_r)$,
$p_2\notin \tJ_{\vecb}^{-1}(\tOmega_r)$,
$\dd(p,p_1)<\ve$ and $\dd(p,p_2)<\ve$.
It follows that $\tJ_{\vecb}(p_1)\in\tOmega_r$,
$\tJ_{\vecb}(p_2)\notin\tOmega_r$,
$\dd(\tJ_{\vecb}(p_1),\tJ_{\vecb}(p))<C\,\ve$
and $\dd(\tJ_{\vecb}(p_2),\tJ_{\vecb}(p))<C\,\ve$;
and hence $\tJ_{\vecb}(p)\in\partial_{C\,\ve}\tOmega_r$.
We have thus proved:
\begin{align}
\partial_{\ve}\bigl(\tJ_{\vecb}^{-1}(\tOmega_r)\bigr)
\subset \tJ_{\vecb}^{-1}\bigl(\partial_{C\,\ve}\tOmega_r\bigr).
\end{align}
Hence, using also \autoref{JBpushforwardLEM} and the fact that
$\tOmega_r$ is smooth,
\begin{align*}
\mu_{\rm H}\bigl(\partial_{\ve}\bigl(\tJ_{\vecb}^{-1}(\tOmega_r)\bigr)\bigr)
\leq \mu_{\rm H}\bigl(\tJ_{\vecb}^{-1}\bigl(\partial_{C\,\ve}\tOmega_r\bigr)\bigr)
=\mu\bigl(\partial_{C\,\ve}\tOmega_r\bigr)
\ll \ve.
\end{align*}
Hence $\tJ_{\vecb}^{-1}(\tOmega_r)$ is smooth.
\end{proof}

\begin{remark}\label{bdependenceREM}
One verifies that the constant $C$ in the proof of the previous lemma
can be taken to be $C_1\|\vecb\|$,
where $\|\vecb\|$ is the Euclidean norm of $\vecb$ %
and where $C_1$ is a constant which only depends on
the Riemannian metrics on ${\rm H}$ and $\ASL_d(\R)$.
\end{remark}

The following is the analogue of \autoref{JbinvtOmegarISSMOOTHlem}
in the case $\vecb=\bn$.
\begin{lemma}\label{JbinvtOmegarISSMOOTHbzerolem}
For any smooth subset $\Omega\subset\R^d$ which contains a neighbourhood of the origin,
and for any $r\in\Z_{\geq0}$,
$\tJ_{\bn}^{-1}(\tOmega_r)$ is a smooth subset of $\Gamma\bs\Gamma {\rm H}$.
\end{lemma}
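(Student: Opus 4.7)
The key observation is that $J_{\bn}:{\rm H}\to\ASL_d(\R)$ given by $\matr Z{\bn}V{I_n}\mapsto(Z,\bn)$ is the composition of the projection of ${\rm H}$ onto its quotient by the normal ``$V$-subgroup'' with the inclusion $\SL_d(\R)\hookrightarrow\ASL_d(\R)$, $Z\mapsto(Z,\bn)$. Consequently $\tJ_{\bn}$ factors through the canonical torus bundle projection $\pi:\Gamma\bs\Gamma{\rm H}\to\SL_d(\Z)\bs\SL_d(\R)$, and writing $\Omega_r^\circ:=\{Z\in\SL_d(\Z)\bs\SL_d(\R)\col \#(\Z^dZ\cap\Omega)=r\}$, we have $\tJ_{\bn}^{-1}(\tOmega_r)=\pi^{-1}(\Omega_r^\circ)$.

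Since $\pi$ is a surjective Lie group homomorphism, it is Lipschitz with respect to the fixed left-invariant Riemannian metrics (call the constant $C_1$), and so $\partial_\ve(\pi^{-1}(A))\subset\pi^{-1}(\partial_{C_1\ve}A)$ for every $A$. Moreover the fiber of $\pi$ is the torus $\M_{n\times d}(\R/\Z)$ of normalized volume one, so Fubini gives $\mu_{\rm H}(\pi^{-1}(A))=\mu_0(A)$ for any Borel $A\subset\SL_d(\Z)\bs\SL_d(\R)$. It therefore suffices to prove that $\Omega_r^\circ$ is a smooth subset of $\SL_d(\Z)\bs\SL_d(\R)$.

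This is where the hypothesis that $\Omega$ contains a ball $B_\delta(\bn)$ around the origin enters. For $Z\in\partial_\ve \Omega_r^\circ$ with $\ve$ sufficiently small (say $\ve<\delta/4$), a change in the lattice-point count between $\Z^dZ$ and a nearby lattice must be caused by a \emph{nonzero} vector $\vecu\in\Z^dZ$ crossing $\partial\Omega$, since the origin $\bn$ lies in every lattice and is robustly contained in the interior of $\Omega$. The boundedness of $\Omega$ confines the relevant $\vecu$'s to a bounded region of $\R^d$, where the action of a $\dd$-ball of radius $\ve$ in $\SL_d(\R)$ perturbs points by at most $O(\ve)$; a standard Lipschitz estimate thus yields a constant $C_2>0$ for which $\partial_\ve\Omega_r^\circ\subset\{Z\col (\Z^dZ\setminus\{\bn\})\cap\partial_{C_2\ve}\Omega\neq\emptyset\}$. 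Finally, by the Siegel mean value formula (the specialization of \autoref{lem:Rogers} to $n=1$) and Markov's inequality,
\begin{equation*}
\mu_0(\partial_\ve\Omega_r^\circ)\leq\int_{\SL_d(\Z)\bs\SL_d(\R)}\#\bigl((\Z^dZ\setminus\{\bn\})\cap\partial_{C_2\ve}\Omega\bigr)\,d\mu_0(Z)=\vol(\partial_{C_2\ve}\Omega)\ll\ve,
\end{equation*}
where the last bound is the smoothness of $\Omega$. The main mild obstacle is the Lipschitz step: one must work out explicit constants controlling how nonzero lattice points in a bounded region are perturbed by the $\SL_d(\R)$-action on $\R^d$, and then combine this with the insulation of the origin provided by $B_\delta(\bn)\subset\Omega$.
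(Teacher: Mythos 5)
Your proposal is correct and follows the same overall route as the paper: reduce to the set $\tOmega_r'=\{\SL_d(\Z)g\col\#(\Z^dg\cap\Omega)=r\}$ in $\SL_d(\Z)\bs\SL_d(\R)$ (your $\Omega_r^\circ$; the paper notes $\tJ_{\bn}^{-1}(\tOmega_r)=\tJ_{\bn}^{-1}(\tOmega_r')$, which is the same observation as your factorization through the bundle projection $\pi$), then pull back smoothness via the Lipschitz property of the homomorphism together with $\mu_{\rm H}\circ\tJ_{\bn}^{-1}=\mu_0$, exactly as in the proof of \autoref{JbinvtOmegarISSMOOTHlem}. The one genuine difference is that the paper simply cites \cite[Lemma 4]{StrombergssonVenkatesh2005} for the smoothness of $\tOmega_r'$, whereas you prove it from scratch: the origin is insulated inside $\Omega$, so a change in the count under an $\ve$-perturbation must come from a nonzero lattice vector landing in $\partial_{C\ve}\Omega$, and the Siegel mean value formula (the $n=1$ case of \autoref{lem:Rogers}) plus Markov gives $\mu_0(\partial_\ve\tOmega_r')\leq\vol(\partial_{C\ve}\Omega)\ll\ve$. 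This is precisely the argument behind the cited lemma, so your version buys self-containedness at the cost of carrying out the (routine but real) Lipschitz bookkeeping for nonzero lattice points in a bounded region; no step of it fails.
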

\begin{proof}
Since $\tJ_{\bn}$
maps $\Gamma\bs\Gamma{\rm H}$ into 
$\SL_d(\Z)\bs\SL_d(\R)$,
we have 
$\tJ_{\bn}^{-1}(\tOmega_r)=\tJ_{\bn}^{-1}(\tOmega_r')$
where
\begin{align*}
\tOmega_r'=\bigl\{\SL_d(\Z)g\col g\in\SL_d(\R),\:\#(\Z^dg\cap\Omega)=r\bigr\}.
\end{align*}
This set $\tOmega_r'$ is a smooth subset of $\SL_d(\Z)\bs\SL_d(\R)$,
by \cite[Lemma 4]{StrombergssonVenkatesh2005}.
Now the proof of \autoref{JbinvtOmegarISSMOOTHlem}
carries over to the present case.
\end{proof}

\begin{proof}[Proof of \autoref{smallsolTHM}]
Let $U,\Omega,\vecb,r$ be given as in the statement of the theorem.
Let $\chi_1:\M_{d\times n}(\R/\Z)\to\{0,1\}$ be the characteristic function of 
$U$; let $\chi_2:\Gamma\bs\Gamma {\rm H}\to\{0,1\}$ be the characteristic function
of $\tJ_{\vecb}^{-1}(\tOmega_r)$,
and let $\chi:\M_{d\times n}(\R/\Z)\times\Gamma\bs\Gamma {\rm H}\to\{0,1\}$
be the characteristic function of $U\times\tJ_{\vecb}^{-1}(\tOmega_r)$.
Then by \autoref{smallsolTHMLEM1} and \eqref{tOmegarDEF},
for any $q\in\Z^+$,
the number of $R\in\scrR_q\cap qU$ such that the equation
$\vecx R\equiv\vecb\mod q$ has exactly $r$ solutions 
$\vecx\in\Z^d\cap q^{n/d}\Omega$
is
\begin{align}\label{smallsolTHMpf1}
\sum_{R\in\scrR_q}\chi\bigl(q^{-1}R,\tn_+(q^{-1}R)D(q)\bigr).
\end{align}
Since $U$ is smooth by assumption, and 
$\tJ_{\vecb}^{-1}(\tOmega_r)$ is smooth by \autoref{JbinvtOmegarISSMOOTHlem}
or \autoref{JbinvtOmegarISSMOOTHbzerolem},
it follows from the proof of
\cite[Lemma 1]{StrombergssonVenkatesh2005}
that for every $0<\delta\leq\frac12$ there exist functions $f_{1,\pm}\in{\rm C}^{\infty}(\M_{d\times n}(\R/\Z))$
and $f_{2,\pm}\in{\rm C}^{\infty}(\Gamma\bs\Gamma {\rm H})$ 
satisfying %
\begin{align*}
&0\leq f_{j,-}\leq \chi_j\leq f_{j,+}\leq 1
\qquad\text{and}\qquad %
S_{\infty,k}(f_{j,\pm})\ll_k\delta^{-k}
\end{align*}
for $j=1,2$ and any real $k\geq0$,
and also
\begin{align*}
\vol\Bigl(\overline{\{A %
\col f_{1,\pm}(A)\neq\chi_1(A)\}}\Bigr)\ll\delta
\qquad\text{and}\qquad
\mu_{\rm H}\Bigl(\overline{\{\Gamma h\col f_{2,\pm}(\Gamma h)\neq\chi_2(\Gamma h)\}}\Bigr)\ll\delta.
\end{align*}
Define the two functions 
$f_{\pm}\in{\rm C}_b^{\infty}({\rm M}_{d\times n}(\mathbb{R}/\mathbb{Z}) \times \Gamma \backslash \Gamma {\rm H})$
through 
$f_{\pm}(T,p)=f_{1,\pm}(T)f_{2,\pm}(p)$.
Then
\begin{align*}
&0\leq f_{-}\leq \chi\leq f_{+}\leq 1;
\hspace{30pt}
S_{\infty,k}(f_{\pm})\ll_k\delta^{-k};
\hspace{30pt}
S_{2,k'}(f_{\pm})\ll_k\delta^{\frac12-k'}
\end{align*}
for any real $k\geq0$ and $k'\geq1$.
Recalling the definition \eqref{SqfDEF},
it follows that the sum in \eqref{smallsolTHMpf1} is bounded from above by
$\#\scrR_q\cdot\scrA_q(f_+)$,
and by \autoref{MAINTHM} this equals
\begin{align}\notag
&\#\scrR_q\biggl(\int_{{\rm M}_{d\times n}(\mathbb{Z}\backslash\mathbb{R})} f_{1,+}\,dT
\int_{\Gamma\backslash \Gamma {\rm H}} f_{2,+}\,d\mu_{\rm H}
+O\Bigl(S_{\infty,\kappa+\ve}(f_+) q^{-\vartheta+\ve}
+S_{2,\kappa'+\ve}(f_+)q^{-\vartheta'+\ve}\Bigr)\biggr)
\\\label{smallsolTHMpf2}
&=\#\scrR_q\biggl(\vol(U)\mu_{\rm H}(\tJ_{\vecb}^{-1}(\tOmega_r))+O(\delta)+
O\Bigl(\delta^{-\kappa-\ve}q^{-\vartheta+\ve}+\delta^{\frac12-\kappa'-\ve}q^{-\vartheta'+\ve}\Bigr)\biggr).
\end{align}
Similarly, 
the sum in \eqref{smallsolTHMpf1} is bounded from below by
$\#\scrR_q\cdot\scrA_q(f_-)$,
which is again estimated by %
the right-hand side of
\eqref{smallsolTHMpf2}.
It follows that also the 
sum in \eqref{smallsolTHMpf1} itself
is estimated by %
the right-hand side of
\eqref{smallsolTHMpf2}.
Note here that by \autoref{JBpushforwardLEM},
$\mu_{\rm H}(\tJ_{\vecb}^{-1}(\tOmega_r))=c(\Omega,\vecb,r)$,
the constant defined in \eqref{smallsolTHMres2}.
We now optimize by choosing 
$\delta=q^{-\alpha}$
with $\alpha=\alpha(d,n):=\min\bigl(\frac{\vartheta}{1+\kappa},\frac{\vartheta'}{\frac12+\kappa'}\bigr)$.
Using also $\vol(U)>0$, it follows that 
\begin{align}\label{smallsolTHMpf3}
\sum_{R\in\scrR_q}\chi\bigl(q^{-1}R,\tn_+(q^{-1}R)D(q)\bigr)
=\#\scrR_q\cdot\vol(U)\bigl(c(\Omega,\vecb,r)+O\bigl(q^{-\alpha+\ve'}\bigr)\bigr),
\end{align}
where $\ve'$ depends on $\ve$,
with $\ve'\to0$ as $\ve\to0$. %
From now on we write $\ve$ in place of $\ve'$.
Let us note that the exponent $\alpha=\alpha(d,n)$
here satisfies the formula
stated in \autoref{smallsolTHM};
this is immediate from the formulas for $\kappa,\vartheta,\kappa',\vartheta'$ in
\autoref{MAINTHM},
where in the special case $d=2$, $n=1$ we make
use of the fact that $\theta\leq\frac7{64}<\frac3{10}$.

It remains to prove that we can replace the factor 
$\#\scrR_q\cdot\vol(U)$ in \eqref{smallsolTHMpf3} by
$\#(\scrR_q\cap qU)$.
To this end, note first that by 
repeating the above argument but with $\chi_2\equiv f_{2,\pm}\equiv 1$,
we obtain
\begin{align*}
\#(\scrR_q\cap qU)=\#\scrR_q\cdot\vol(U)\bigl(1+O\bigl(q^{-\alpha+\ve}\bigr)\bigr),
\end{align*}
that is, there exists a constant $C=C(U,\ve)>0$ %
such that
\begin{align*}
\#\scrR_q\cdot\vol(U)\bigl(1-Cq^{-\alpha+\ve}\bigr)\leq
\#(\scrR_q\cap qU)\leq \#\scrR_q\cdot\vol(U)\bigl(1+Cq^{-\alpha+\ve}\bigr).
\end{align*}
We have $\bigl(1+Cq^{-\alpha+\ve}\bigr)^{-1}\geq 1-O(q^{-\alpha+\ve})$
and so
$\#\scrR_q\cdot\vol(U)\geq\#(\scrR_q\cap qU)\bigl(1-O(q^{-\alpha+\ve})\bigr)$;
and if $Cq^{-\alpha+\ve}\leq\frac12$ then also
$\bigl(1-Cq^{-\alpha+\ve}\bigr)^{-1}\leq 1+O(q^{-\alpha+\ve})$,
allowing us to conclude
$\#\scrR_q\cdot\vol(U)=\#(\scrR_q\cap qU)\cdot\bigl(1+O(q^{-\alpha+\ve})\bigr)$.
Using the last estimate in
\eqref{smallsolTHMpf3} gives
\eqref{smallsolTHMres1},
and even when $Cq^{-\alpha+\ve}>\frac12$
we conclude that \eqref{smallsolTHMres1}
is a valid bound from below
on the quantity in \eqref{smallsolTHMpf1}.
Note also that the statement around \eqref{smallsolTHMres1}
holds trivially if $\#(\scrR_q\cap qU)=0$;
hence from now on we may assume $\#(\scrR_q\cap qU)\geq1$.
Now to complete the proof, note that 
$Cq^{-\alpha+\ve}>\frac12$
implies $q\ll1$,
hence $\#\scrR_q\ll1$,
and so
$\#\scrR_q\cdot\vol(U)\leq 1+O(q^{-\alpha+\ve})
\leq \#(\scrR_q\cap qU)\cdot(1+O(q^{-\alpha+\ve}))$,
provided that we take the implied constant sufficiently large.
Using the last inequality in \eqref{smallsolTHMpf3} gives
the desired upper bound.
\end{proof}

\begin{proof}[Proof of the statement in Remark \ref{smallsolTHMgenREM}]
The proof of \autoref{smallsolTHM}
carries over, with essentially the only
difference being that $\chi_2:\Gamma\bs\Gamma {\rm H}\to\{0,1\}$
is now taken to be the characteristic function
of the intersection $\cap_{j=1}^k \tJ_{\vecb_j}^{-1}(\tOmega_{j,r_j})$,
where $\tOmega_{j,r_j}=
\bigl\{\ASL_d(\Z)g\col g\in\ASL_d(\R),\:\#(\Z^dg\cap\Omega_j)=r_j\bigr\}$.
Now by a property valid in arbitrary metric spaces, %
$\partial\bigl(\cap_{j=1}^k \tJ_{\vecb_j}^{-1}(\tOmega_{j,r_j})\bigr)
\subset\cup_{j=1}^k \partial \bigl(\tJ_{\vecb_j}^{-1}(\tOmega_{j,r_j})\bigr)$,
and hence
$\partial_{\ve}\bigl(\cap_{j=1}^k \tJ_{\vecb_j}^{-1}(\tOmega_{j,r_j})\bigr)
\subset\cup_{j=1}^k \partial_{\ve}\bigl( \tJ_{\vecb_j}^{-1}(\tOmega_{j,r_j})\bigr)$
for every $\ve$.
Hence, using the fact that
each set $\tJ_{\vecb_j}^{-1}(\tOmega_{j,r_j})$
is smooth, it follows that also the intersection
$\cap_{j=1}^k \tJ_{\vecb_j}^{-1}(\tOmega_{j,r_j})$ is smooth.
The rest of the proof is essentially the same as before,
and we obtain \eqref{smallsolTHMgenREMres} with
\begin{align}\label{smallsolTHMgenREMresCONST}
c=\mu_H\biggl(\bigcap_{j=1}^k \tJ_{\vecb_j}^{-1}(\tOmega_{j,r_j})\biggr).
\end{align}
\end{proof}

\begin{proof}[Proof of the statement in Remark \ref{smallsolTHMneqdREM}]
Recall that when $d=n$, the map $\tn_-$ (see \eqref{tnnDEF}) gives an identification
between $\M_n(\R/\Z)$ and $\Gamma\bs\Gamma{\rm H}$.
Let $m_{\vecb}$ be the 'multiplication' map from 
$\Gamma\bs\Gamma{\rm H}$ to $(\R/\Z)^n$ given by
$m_{\vecb}(\tn_-(A))=\vecb A$ for all $A\in\M_n(\R/\Z)$.
Let $\chi_1:\M_n(\R/\Z)\to\{0,1\}$ be the characteristic function of $U$;
let $\chi_2:\Gamma\bs\Gamma{\rm H}\to\{0,1\}$ be the characteristic function
of $m_{\vecb}^{-1}(\Omega)$, and let $\chi:\M_n(\R/\Z)\times\Gamma\bs\Gamma{\rm H}\to\{0,1\}$ 
be the characteristic function of $U\times m_{\vecb}^{-1}(\Omega)$.
Recall that $\tn_+(q^{-1}R)D(q)=\tn_-(q^{-1}R^{-1})$
for every $R\in\scrR_q$, by \autoref{lem:mtx_relation}.
Hence the number of $R\in\scrR_q\cap qU$
such that $\vecx=\vecb R^{-1}$ lies in $q\Omega$
is now again given by the sum in \eqref{smallsolTHMpf1}.
One verifies that $\mu_{\rm H}\circ m_{\vecb}^{-1}=\vol$,
the Lebesgue measure on $(\R/\Z)^n$,
and by an argument as in \autoref{JbinvtOmegarISSMOOTHlem},
$m_{\vecb}^{-1}(\Omega)$ is a smooth subset of $\Gamma\bs\Gamma{\rm H}$.
Now the proof of \autoref{smallsolTHM}
carries over to the present case.
\end{proof}

\subsection{By-product: counting matrices} \label{ss:byprod}
The next theorem gives an optimal bound on the following quantity,
for any given $1\leq r<n<d$, any prime $p$ and integer $1\leq b\leq \frac{p-1}2$:
\begin{equation} \label{e:to_count}
N_{p,b}:=\#\{ X \in {\rm M}_{d\times n}(\mathbb{Z}) : \| X \|_\infty \leq b \text{ and } \rank(X \bmod p) = r \}.
\end{equation} 
The proof of this bound is a by-product of the proof of
our main result, \autoref{MAINTHM};
in particular it uses an interpretation in terms of Hecke operators,
and Rogers' formula (\autoref{lem:Rogers}).

\begin{theorem}\label{byprodTHM}
Let $1\leq r<n<d$. For every prime $p$ and integer $1\leq b\leq \frac{p-1}2$,
\begin{equation}\label{byprodTHMres}
N_{p,b}\asymp_d \max\bigl(b^{dr},b^{dn}p^{-(d-r)(n-r)}\bigr).
\end{equation}
\end{theorem}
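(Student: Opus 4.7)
I would establish matching upper and lower bounds separately.

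\textbf{Lower bound.} Two explicit constructions, each yielding one of the two terms:
\begin{itemize}
\item To get $\gg b^{dr}$: take matrices whose last $n-r$ columns are zero and whose first $r$ columns are any $r$-tuple in $\{-b,\ldots,b\}^d$ that is linearly independent modulo $p$. Since $b\le(p-1)/2<p$, a positive proportion of such $r$-tuples are linearly independent modulo $p$ (the fraction is bounded below in terms of $d,r$), giving $\gg b^{dr}$ matrices of rank exactly $r$ mod $p$.
\item To get $\gg b^{dn}p^{-(d-r)(n-r)}$ (when $b\ge p^{(d-r)/d}$): for each of the $\binom{d}{r}_p\asymp p^{r(d-r)}$ subspaces $W\subseteq\mathbb{F}_p^d$ of dimension $r$, the preimage $L_W:=\{v\in\mathbb{Z}^d:v\bmod p\in W\}$ is a lattice of covolume $p^{d-r}$, so Minkowski gives $\#(L_W\cap[-b,b]^d)\asymp b^d/p^{d-r}$; a positive proportion of the $n$-tuples in $L_W$ actually span $W$, and summing over $W$ yields $\gg p^{r(d-r)}(b^d/p^{d-r})^n=b^{dn}p^{-(d-r)(n-r)}$.
\end{itemize}

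\textbf{Upper bound.} I split into two ranges of $b$.

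\emph{Case (i): $b\ge p^{(d-r)/d}$.} Here I follow the by-product strategy from the proof of the main theorem. One begins with the majorization
\[
N_{p,b}\le \sum_{\substack{W\subseteq\mathbb{F}_p^d\\\dim W=r}} \#(L_W\cap[-b,b]^d)^n.
\]
The rescaled lattices $p^{-(d-r)/d}L_W$ form a single Hecke orbit of $\mathbb{Z}^d$ parametrized by $\Gamma^0(p)\backslash\SL_d(\mathbb{Z})$ (where $\Gamma^0(p)$ is defined analogously to \eqref{e:Gamma0dnq_def} with the block of size $d-r$ on top), so the sum equals $\#(\Gamma^0(p)\backslash\SL_d(\mathbb{Z}))\cdot(T_D^*F)(I_d)$ for $D:=p^{-(d-r)/d}\mathrm{diag}(I_r,pI_{d-r})\in\SL_d(\mathbb{R})$ and $F(g):=\#(\mathbb{Z}^d g\cap[-\tilde b,\tilde b]^d)^n$ with $\tilde b:=bp^{-(d-r)/d}$. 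I then dominate $F(g)-1$ by a constant multiple of $\Phi^{(\kappa)}_{1,\tilde b^{-\kappa}}(g)$ from Section~\ref{sec:GoN} (taking $\kappa>nd$), use the local-equivalence step of \eqref{E2qtreatment5}--\eqref{gwconclnew} to control $(T_D^*F)(I_d)$ by the integral over $\SL_d(\mathbb{Z})\backslash\SL_d(\mathbb{R})$, and finally invoke \autoref{L1boundprop}, which yields $\int\Phi^{(\kappa)}_{1,\tilde b^{-\kappa}}\,d\mu_0\ll\tilde b^d(1+\tilde b^d)^{n-1}\ll\tilde b^{dn}$ since $\tilde b\ge1$. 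Multiplying by $\#(\Gamma^0(p)\backslash\SL_d(\mathbb{Z}))\asymp p^{r(d-r)}$ gives $N_{p,b}\ll b^{dn}p^{-(d-r)(n-r)}$.

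\emph{Case (ii): $b<p^{(d-r)/d}$.} Here the Hecke/Rogers majorization only yields $N_{p,b}\ll p^{r(d-r)}$, which overshoots the target $b^{dr}$ because the trivial ``zero vector'' contributes once to each of the $\binom{d}{r}_p$ subspaces. I would instead argue combinatorially by choice of pivot: any $X$ with rank $r$ mod $p$ has an $r$-subset of columns forming an $\mathbb{F}_p$-basis of the column span, contributing a factor of at most $\binom{n}{r}$. For a fixed pivot choice, the $r$ pivot columns are arbitrary integer vectors in $[-b,b]^d$, contributing $\ll b^{dr}$, while each remaining column lies in the sublattice $\mathbb{Z} v_{i_1}+\cdots+\mathbb{Z} v_{i_r}+p\mathbb{Z}^d$ of $\mathbb{Z}^d$ intersected with $[-b,b]^d$; since $b<p/2$, only the small integer combinations with coefficients $|a_\ell|=O_{d,r}(1)$ can lie in $[-b,b]^d$, and a careful bound (handling both generic pivots and the few ``degenerate'' ones with short integer relations by summing over integer-rank $r$ sublattices via a Schmidt-type estimate as in \autoref{lem:Rogers}) gives $O_{d,n,r}(1)$ candidates per non-pivot column. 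This yields $N_{p,b}\ll b^{dr}$.

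\textbf{Main obstacle.} The essential difficulty lies in Case (ii): the Hecke--Rogers bound is intrinsically insensitive to the distinction between ``exactly rank $r$'' and ``rank at most $r$'' modulo $p$, and this loses the necessary factor when $b$ is small. Bounding the non-pivot column count uniformly over pivot columns (rather than merely on average over generic pivots) is where the careful argument is needed; equivalently, one must control the Schmidt-type sum $\sum_{V}\#(V\cap[-b,b]^d)^n$ over rank-$r$ primitive sublattices $V\subseteq\mathbb{Z}^d$, in the regime where the individual terms are bounded by $O(1)$ and the summation-diagonal estimate yields the sharp exponent $dr$.
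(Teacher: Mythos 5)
Your lower bound and your Case (i) upper bound are essentially sound and follow the same route as the paper: the lower bound via two explicit constructions (though the paper proves $\#(\pi_p^{-1}(W)\cap[-b,b]^d)\gg b^dp^{r-d}$ by a pigeonhole/difference-set argument rather than ``Minkowski'', and it substantiates the ``positive proportion of $r$-tuples independent mod $p$'' claim by an induction on $r\times r$ minors, which you would need for $b=O(1)$), and the upper bound via the Hecke-operator identity, the local-equivalence step, and \autoref{L1boundprop}. The genuine gap is in Case (ii). Your pivot argument asserts that, because $b<p/2$, each non-pivot column has $O_{d,n,r}(1)$ admissible values; this is false. Take the pivot columns to be the standard basis vectors $e_1,\ldots,e_r$. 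The admissible set for a non-pivot column is then $\{w\in[-b,b]^d\cap\Z^d: w_j\equiv 0\bmod p\text{ for }j>r\}=(\Z\cap[-b,b])^r\times\{0\}^{d-r}$, which has $(2b+1)^r\asymp b^r$ elements. So the termwise bound ``$b^{dr}\cdot O(1)^{n-r}$'' fails, and what is actually required is the averaged estimate $\sum_{W\in\Gr_{r,d}(\F_p)}P(W)\,Q(W)^{n-r}\ll b^{dr}$, with $P(W)$ the number of pivot $r$-tuples spanning $W$ mod $p$ and $Q(W)=\#(\pi_p^{-1}(W)\cap[-b,b]^d)$; your sketch defers exactly this point to an unproved ``Schmidt-type estimate'', so the proof is incomplete precisely at the step you identify as the main obstacle.

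The fix --- and the paper's actual argument --- is to never discard the rank condition from the majorant. Majorize $N_{p,b}$ by $\sum_{W}\#\bigl\{(\vecv_1,\ldots,\vecv_n)\in(\pi_p^{-1}(W)\cap[-b,b]^d)^n:\dim\Span_\R\{\vecv_1,\ldots,\vecv_n\}\geq r\bigr\}$; the real-rank condition is inherited from $\rank(X\bmod p)=r$. After the Hecke and local-equivalence steps one applies \autoref{lem:Rogers} to the characteristic function of this set: in the decomposition over $B\in A_{n,m}$ the matrix $BX$ has rank at most $m$, so every term with $m<r$ vanishes identically, and the surviving terms give $\sum_{m=r}^{n}(b^dp^{-(d-r)})^m$, which after multiplication by $\#\Gr_{r,d}(\F_p)\asymp p^{r(d-r)}$ yields $\max\bigl(b^{dr},b^{dn}p^{-(d-r)(n-r)}\bigr)$ for all $b$ at once, with no case split. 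Your Case (i), by dominating $F-1$ with $\Phi^{(\kappa)}_{1,\tilde b^{-\kappa}}$, removes only the $m=0$ (zero-tuple) contribution; that is exactly why it degenerates to $p^{r(d-r)}$ for small $b$ and forces the problematic Case (ii).
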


\begin{remark}
The same %
counting problem was considered by Ahmadi and Shparlinski in \cite[Theorem 9]{AhShS2007}. They were however interested in obtaining asymptotics, which they did through results ultimately relying on Deligne-type methods for estimating the number of $\F_p$-points on varieties. Their large $p$ asymptotics are non-trivial in the range where $b$ is large, specifically -- with our notation -- whenever $b \geq p^{\gamma_{r, n, d} + \varepsilon}$ for some positive $\varepsilon$, where 
\[
\gamma_{r, n, d} = \max\left( \frac 12 + \frac{(n - r)(d - r)}{2nd}, 1 - \frac{1}{2(n - r)(d -r) + 2} \right).
\]
It should be noted that their result is valid for arbitrary $n,d\geq1$.

Our method yields an upper bound of the correct order of magnitude for arbitrary $p$ and $b$;
however we are not able to handle the case of square matrices ($n=d$).
This stems from the application of Rogers' formula in our approach;
\autoref{lem:Rogers} is only valid under the assumption $n<d$.
\end{remark}

\begin{proof} 
The main work will be spent on proving that \eqref{byprodTHMres} gives a valid
bound from above on $N_{p,b}$. %
To start, let $\pi_p \colon \mathbb{Z}^d \twoheadrightarrow \mathbb{F}_p^d$ be the canonical projection;
denote by $\Gr_{r,d}(\F_p)$ the space of $r$-dimensional linear subspaces of $\F_p^d$, 
and by $X_d$ the space $\SL_d(\Z) \backslash \SL_d(\R)$.
Observe that for a linear subspace $V$ of $\F_p^d$ with dimension $r$, $\pi_p^{-1}(V)$ is a sublattice of $\Z^d$ whose covolume is $p^{d-r}$, hence $p^{\frac {r-d}d} \pi_p^{-1}(V)$ is a unimodular lattice in $\R^d$.
We may thus introduce the map
\begin{align*}
\phi_p \colon \Gr_{r,d}(\mathbb{F}_p) \to X_d,
\qquad\phi_p(V)=p^{\frac rd - 1} \pi_p^{-1}(V).
\end{align*}
It follows that an upper bound for $N_{p,b}$ is given by
\begin{align*}
 &\sum_{V \in \Gr_{r,d}(\mathbb{F}_p)} \#\{ (\vecv_1, \ldots, \vecv_n) \in (\pi_p^{-1}(V) \cap [-b,b]^d)^n : \dim \mathrm{Span}_\mathbb{R}(\{\vecv_1, \ldots, \vecv_n\}) \geq r \}) \\
&= \sum_{V \in \Gr_{r,d}(\mathbb{F}_p)} \#\{ (\vecv_1, \ldots, \vecv_n) \in (\phi_p(V) \cap b\,\tilde{C})^n : \dim \mathrm{Span}_\mathbb{R}(\{\vecv_1, \ldots, \vecv_n\}) \geq r \}),
\end{align*}
where $\tilde{C}$ is the cube $\tilde{C}=p^{\frac rd-1}[-1,1]^d\subset\R^d$.

At this point we recall the connection between the Grassmannian over $\F_p$ and lattices, namely that there is a 
bijection between $\Gr_{r,d}(\F_p)$ and the lattices $p \Z^d \subset L \subset \Z^d$ of index $p^{d-r}$.
Furthermore, the family of such lattices can be used to define a Hecke operator:
Set
\begin{align*}
D_p'=p^{\frac{r}d-1}\matr{I_r}{}{}{pI_{d-r}}\in\SL_d(\R),
\end{align*}
and introduce, as in Section \ref{ss:Heckepts},  
the Hecke operator $T_{D_p'}$,
acting on functions on $X_d$.
It then follows from \cite[Lemma 3.13]{Shi94} that
\begin{align*}
(T_{D_p'}\Phi)(L)=\frac 1{\# \Gr_{r,d}(\F_p)} \sum_{\substack{pL \subset L' \subset L \\ [L : L'] = p^{d-r}}} \Phi\left(p^{\frac rd -1} L' \right),
\end{align*}
for any $\Phi:X_d\to\mathbb{C}$.

Hence:
\[
N_{p,b} \leq \# \Gr_{r,d}(\F_p) \bigl[T_{D_p'}(F_p)\bigr](\Z^d),
\]
where $F_p \colon X_d \to \Z_{\geq 0}$
is defined, for $L \in X_d$, by 
\[
F_p(L) = \# \{ (\vecv_1, \ldots, \vecv_n) \in (L \cap b\,\tilde{C})^n : \dim \Span_\R{(\vecv_1, \ldots, \vecv_n)} \geq r \}.
\]

We now proceed as in the proof of our main theorem, specifically the part after \eqref{E2qtreatment4}, whose role is now played by the above inequality.

As in that proof, let $\Omega$ be an open neighbourhood of the identity matrix $I_d$ in $\SL_d(\R)$
such that \eqref{Omegaass} holds for all $w\in \Omega$ and $\vecv\in\R^d$;
it then follows that 
\[ 
\forall w \in \Omega:\qquad N_{p,b} \leq \#\Gr_{r,d}(\F_p) \bigl[T_{D_p'}(\tilde{F}_p)\bigr](\Z^dw),
\]
where
\begin{align*}
\tilde{F}_p \colon X_d &\to \Z_{\geq 0}
\\ 
L &\mapsto \#\{(\vecv_1, \ldots, \vecv_n) \in (L \cap 2b\, \tilde{C})^n : \dim \Span_\R{\{\vecv_1, \ldots, \vecv_n\}} \geq r \}.
\end{align*}
Hence 
\begin{align}
    N_{p,b} &\ll_d \#\Gr_{r, d}(\F_p) \int_{X_d} [T_{D_p'}(\tilde{F}_p)](L)\, d\mu_0(L)
= \#\Gr_{r, d}(\F_p) \int_{X_d} \tilde{F}_p\, d\mu_0.
\end{align}
(Recall that $\mu_0$ is the $\SL_d(\R)$-invariant probability measure on $X_d$.)
The last upper bound can be rewritten as 
\begin{equation} \label{e:Ncp_bound}
N_{p,b} \ll_d \#\Gr_{r, d}(\F_p) \int_{X_d} \sum_{(\vecv_1, \ldots, \vecv_n) \in L^n} \chi_{p,b}(\vecv_1, \ldots, \vecv_n)\, d \mu_0(L),
\end{equation}
where 
$\chi_{p,b} \colon (\R^d)^n \to \{0, 1 \}$ is the characteristic function of the set 
$\{(\vecv_1, \ldots, \vecv_n) \in (2b\, \tilde{C})^n :  \dim \Span_\R{\{\vecv_1, \ldots, \vecv_n\}} \geq r \}$.
We now rewrite the integrand in such a way that we can apply \autoref{lem:Rogers}:
writing $L = \Z^d g$ for some $g \in \SL_d(\R)$, it is 
\begin{align*}
&\sum_{(m_1, \ldots, m_n) \in (\Z^d)^n} \chi_{p,b}(m_1 g, \ldots, m_n g)
= \sum_{X \in {\rm M}_{n \times d}(\Z)} \chi_{p,b}(Xg)
= \sum_{\substack{X \in {\rm M}_{n \times d}(\Z)\\ X\neq\bn}} \chi_{p,b}(Xg).
\end{align*}
We are now in a position to use \autoref{lem:Rogers} and deduce that 
the integral in \eqref{e:Ncp_bound} is equal to
\[
\sum_{m=1}^n \sum_{B \in A_{n,m}} \int_{{\rm M}_{m \times d}(\R)} \chi_{p,b}(BX)\, dX,
\]
where we recall that the matrix $B$ has rank $m$,
and so $BX$ has rank $\leq m$.
By the definition of $\chi_{p,b}$, it follows that the integrand vanishes whenever $m \leq r-1$, so the sum is equal to
\[
\sum_{m=r}^n \sum_{B \in A_{n,m}} \int_{{\rm M}_{m \times d}(\R)} \chi_{p,b}(BX)\, dX.
\]
If we now define $\chi \colon {\rm M}_{n \times d}(\R) \to \{0, 1\}$ to be the characteristic function of the set of matrices $M \in {\rm M}_{n \times d}(\R)$ such that $\| M \|_\infty \leq 1$ and $\rank{M} \geq r$, we have, 
using
$2b\, \tilde{C} = 2bp^{\frac rd-1} [-1, 1]^d$: 
\begin{align}\notag
\sum_{B \in A_{n,m}} \int_{{\rm M}_{m \times d}(\R)} \chi_{p,b}(BX)\, dX 
&= \sum_{B \in A_{n,m}} \int_{{\rm M}_{m \times d}(\R)} \chi((2b)^{-1}p^{1-\frac rd}BX)\, dX
\\\notag 
&= (2bp^{\frac rd-1})^{md} \sum_{B \in A_{n,m}} \int_{{\rm M}_{m \times d}(\R)} \chi(BX)\, dX 
\ll_d b^{md} p^{-m(d-r)}.
\end{align}
In the last step we used the fact that 
$\sum_{B\in A_{n,m}}\int_{{\rm M}_{m \times d}(\R)} \chi(BX)\, dX\leq\int_{X_d}\sum_{X\in\M_{n\times d}(\Z)}\chi(Xg)\,d\mu_0(g)
<\infty$,
by \autoref{lem:Rogers}
and \cite[Theorem 2]{wS58}.
Plugging the last bound back into \eqref{e:Ncp_bound}, 
we finally obtain
\begin{align*}
N_{p,b} \ll_d \#\Gr_{r,d}(\F_p) \sum_{m = r}^n (b^d p^{-(d-r)})^m \ll_d p^{r(d-r)} 
\max\bigl((b^d p^{-(d-r)})^r,(b^d p^{-(d-r)})^n\bigr)
\\
=\max\bigl(b^{dr},b^{dn}p^{-(d-r)(n-r)}\bigr),
\end{align*}
i.e.\ we have proved that \eqref{byprodTHMres} gives a valid upper bound on
$N_{p,b}$.

\vspace{5pt}

To finish, we prove that the same expression is also a lower bound on $N_{p,b}$; 
it should be noted that 
this proof 
is completely elementary.  
As a first step we note that 
\begin{align}\label{byprodTHMlbpf1}
\#\{Y\in\M_r(\Z)\col\|Y\|_\infty\leq b\text{ and }\det Y\not\equiv0\mod p\}> b^{r^2}.
\end{align}
This is proved by induction:
First, by immediate inspection (using $b\geq1$), we have 
\begin{align}\label{byprodTHMlbpf2}
\#\{y\in\Z\cap[-b,b]\col y\not\equiv a\mod p\}> b
\qquad (\forall a\in\Z).
\end{align}
This fact, applied with $a=0$, means that \eqref{byprodTHMlbpf1} holds for $r=1$.
Next, for $r\geq2$, 
write $Y=(y_{ij})\in\M_{r}(\Z)$,
and let $Y'$ be the top left $(r-1)\times(r-1)$ submatrix of $Y$. 
Then by expanding $\det Y$ along the bottom row,
we have
$\det Y=y_{r,r}\cdot \det Y'+h$,
where $h$ is an integer which is independent of $y_{r,r}$.
Hence for any fixed choice of $Y'$ with $\|Y'\|_\infty\leq b$ and $\det Y'\not\equiv0\mod p$,
and any fixed choice of the entries $y_{r,i}$ and $y_{i,r}$ ($i=1,\ldots,r-1$),
there is some $a\in\Z$ such that 
$\det Y\equiv0\mod p$ holds if and only if $y_{r,r}\equiv a\mod p$;
and so by \eqref{byprodTHMlbpf2} there are more than $b$ choices of $y_{r,r}\in\Z\cap[-b,b]$
which make $\det Y\not\equiv0\mod p$.
Since the number of choices of $Y'$ as above is $\geq b^{(r-1)^2}$ 
(by induction), and each entry $y_{r,i}$ and $y_{i,r}$ ($i=1,\ldots,r-1$)
can be chosen in more than $b$ ways,
it follows that \eqref{byprodTHMlbpf1} holds.

Note that any 
matrix $X\in\M_{d\times n}(\Z)$
with $\|X\|_\infty\leq b$
whose top left $r\times r$ submatrix 
has determinant $\not\equiv0\mod p$
and whose last $n-r$ columns vanish identically,
belongs to the set in \eqref{e:to_count}.
Hence \eqref{byprodTHMlbpf1} immediately implies that
\begin{align}\label{byprodTHMlbpf3}
N_{p,b}\geq b^{r^2}\cdot b^{r(d-r)}=b^{dr}.
\end{align}

Next we will prove that we also have
$N_{p,b}\gg b^{dn}p^{-(d-r)(n-r)}$.
Let 
\begin{align*}
B':=(\Z\cap[-\tfrac12b,\tfrac12b])^d
\quad\text{and}\quad
B:=(\Z\cap[b,b])^d,
\end{align*}
and note that $\#(\Z\cap[-\tfrac12b,\tfrac12b])>\frac12b$ and hence
$\#B'>(\tfrac12 b)^d$.
We claim that for every vector subspace $V\subset\F_p^d$ of dimension $r$,
\begin{align}\label{lowerboundpf1}
\#(B\cap \pi_p^{-1}(V))>(\tfrac12 b)^dp^{r-d}.
\end{align}
To prove this,
set $H:=\max\{\#\alpha^{-1}(\vecw)\col \vecw\in \F_p^d/V\}$,
where $\alpha$ is the projection map from $B'$ to $\F_p^d/V$.
Then 
$(\tfrac12 b)^d<\#B'\leq \#(\F_p^d/V)\cdot H=p^{d-r}H$,
and so $H>(\frac12b)^dp^{r-d}$.
But the definition of $H$ implies that there exist
$H$ distinct vectors $\vecv_1,\ldots,\vecv_H$ in $B'$
satisfying $\pi_p(\vecv_1)\equiv\cdots\equiv\pi_p(\vecv_H)\mod V$.
It follows that $\vecv_1-\vecv_i$ for $i=1,2,\ldots,H$
are $H$ distinct vectors lying in $B\cap \pi_p^{-1}(V)$,
and hence $\#(B\cap \pi_p^{-1}(V))\geq H>(\frac12b)^dp^{r-d}$,
i.e.\ \eqref{lowerboundpf1} is proved.

Now let us construct matrices $X$ belonging to the set in 
\eqref{e:to_count} as follows:
First choose the left $d\times r$ submatrix $X'$ of $X$ to have all entries in $\Z\cap[-b,b]$
and full rank mod $p$.
By the argument giving \eqref{byprodTHMlbpf3},
this choice can be made in $\geq b^{dr}$ ways. 
Let $V\subset\F_p^d$ be the span of the columns of $X'$ reduced mod $p$.
Finally, pick each remaining column of $X$ as an arbitrary vector in
$B\cap \pi_p^{-1}(V)$.
By \eqref{lowerboundpf1}, these columns can be chosen in more than
$\bigl((\tfrac12 b)^dp^{r-d}\bigr)^{n-r}$ ways,
and our construction guarantees that $X$ belongs to the set in 
\eqref{e:to_count}.
Hence
\begin{align}\label{byprodTHMlbpf4}
N_{p,b}>b^{dr}\cdot\bigl((\tfrac12 b)^dp^{r-d}\bigr)^{n-r}
\end{align}
Together, 
\eqref{byprodTHMlbpf3} and \eqref{byprodTHMlbpf4}
imply the desired lower bound,
$N_{p,b} \gg_d \max\bigl(b^{dr},b^{dn}p^{-(d-r)(n-r)}\bigr)$
(with the implied constant being $2^{-d(n-r)}$).
\end{proof}

\thispagestyle{empty}
{\footnotesize
\bibliographystyle{amsalpha}
\bibliography{bibliography}
}
\end{document}